\numberwithin{equation}{section}
\numberwithin{figure}{section}
\theoremstyle{plain}
\newtheorem{thm}{\protect\theoremname}[section]
\theoremstyle{definition}
\newtheorem{defn}[thm]{\protect\definitionname}
\theoremstyle{remark}
\newtheorem{rem}[thm]{\protect\remarkname}
\theoremstyle{plain}
\newtheorem{prop}[thm]{\protect\propositionname}
\theoremstyle{definition}
\newtheorem{example}[thm]{\protect\examplename}
\theoremstyle{plain}
\newtheorem{lem}[thm]{\protect\lemmaname}
\theoremstyle{plain}
\newtheorem{cor}[thm]{\protect\corollaryname}
\date{\today}
\providecommand{\corollaryname}{Corollary}
\providecommand{\definitionname}{Definition}
\providecommand{\examplename}{Example}
\providecommand{\lemmaname}{Lemma}
\providecommand{\propositionname}{Proposition}
\providecommand{\remarkname}{Remark}
\providecommand{\theoremname}{Theorem}
\begin{document}
\global\long\def\F{\mathrm{\mathbf{F}} }%
\global\long\def\Aut{\mathrm{Aut}}%
\global\long\def\C{\mathbf{C}}%
\global\long\def\H{\mathcal{H}}%
 
\global\long\def\U{\mathcal{U}}%
 
\global\long\def\U{\mathcal{U}}%
 
\global\long\def\lab{\ell}%
 
\global\long\def\tsg{\widetilde{\Sigma_{g}}}%

\global\long\def\ext{\mathrm{ext}}%
 
\global\long\def\triv{\mathrm{triv}}%
 
\global\long\def\Hom{\mathrm{Hom}}%

\global\long\def\trace{\mathrm{tr}}%
 
\global\long\def\rk{\mathrm{rk}}%

\global\long\def\L{\mathcal{L}}%
\global\long\def\W{\mathcal{W}}%
\global\long\def\SL{\mathrm{SL}}%
 
\global\long\def\A{\mathcal{A}}%
\global\long\def\a{\mathbf{a}}%

\global\long\def\D{\mathcal{D}}%
\global\long\def\df{\mathrm{def}}%
\global\long\def\eqdf{\stackrel{\df}{=}}%
\global\long\def\Tr{\mathrm{Tr}}%
\global\long\def\std{\mathrm{std}}%
 
\global\long\def\HS{\mathrm{H.S.}}%
\global\long\def\e{\mathbf{e}}%
\global\long\def\c{\mathbf{c}}%
\global\long\def\d{\mathbf{d}}%
\global\long\def\AA{\mathbf{A}}%
\global\long\def\BB{\mathbf{B}}%
\global\long\def\u{\mathbf{u}}%
  
\global\long\def\v{\mathbf{v}}%
\global\long\def\spec{\mathrm{spec}}%
\global\long\def\Ind{\mathrm{Ind}}%
\global\long\def\half{\frac{1}{2}}%
\global\long\def\Re{\mathrm{Re}}%
 
\global\long\def\Im{\mathrm{Im}}%
\global\long\def\Rect{\mathrm{Rect}}%
\global\long\def\Crit{\mathrm{Crit}}%
\global\long\def\Stab{\mathrm{Stab}}%
\global\long\def\SL{\mathrm{SL}}%
\global\long\def\Tab{\mathrm{Tab}}%
\global\long\def\Cont{\mathrm{Cont}}%
\global\long\def\I{\mathcal{I}}%
\global\long\def\J{\mathcal{J}}%
\global\long\def\short{\mathrm{short}}%
\global\long\def\Id{\mathrm{Id}}%
\global\long\def\B{\mathcal{B}}%
\global\long\def\ax{\mathrm{ax}}%
\global\long\def\cox{\mathrm{cox}}%
\global\long\def\row{\mathrm{row}}%
\global\long\def\col{\mathrm{col}}%
\global\long\def\X{\mathbb{X}}%
\global\long\def\Fat{\mathsf{Fat}}%

\global\long\def\V{\mathcal{V}}%
\global\long\def\P{\mathbb{P}}%
\global\long\def\Fill{\mathsf{Fill}}%
\global\long\def\fix{\mathsf{fix}}%
 
\global\long\def\reg{\mathrm{reg}}%
\global\long\def\edge{E}%
\global\long\def\id{\mathrm{id}}%
\global\long\def\emb{\mathrm{emb}}%

\global\long\def\Hom{\mathrm{Hom}}%
 
\global\long\def\F{\mathrm{\mathbf{F}} }%
  
\global\long\def\pr{\mathrm{Prob} }%
 
\global\long\def\tr{{\cal T}r }%
\global\long\def\core{\mathrm{Core}}%
\global\long\def\pcore{\mathrm{PCore}}%
\global\long\def\im{\vartheta}%
\global\long\def\br{\mathsf{BR}}%
 
\global\long\def\sbr{\mathsf{SBR}}%
 
\global\long\def\ebs{\mathsf{EBS}}%
 
\global\long\def\ev{\mathrm{ev}}%
 
\global\long\def\CC{\mathcal{C}}%
 
\global\long\def\PP{\mathcal{P}}%
 
\global\long\def\sides{\mathrm{Sides}}%
\global\long\def\tp{\mathrm{top}}%
\global\long\def\lf{\mathrm{left}}%
\global\long\def\MCG{\mathrm{MCG}}%
\global\long\def\EE{\mathcal{E}}%
 
\global\long\def\mog{\mathfrak{MOG}}%
 
\global\long\def\fg{\le_{\mathrm{f.g.}}}%
 
\global\long\def\v{\mathfrak{v}}%
\global\long\def\e{\mathfrak{e}}%
\global\long\def\f{\mathfrak{f}}%
 
\global\long\def\d{\mathfrak{d}}%
\global\long\def\he{\mathfrak{he}}%

\global\long\def\defect{\mathrm{Defect}}%
\global\long\def\M{\mathcal{M}}%
\global\long\def\sdefect{\max\defect}%

\title{Core Surfaces}
\author{Michael Magee and Doron Puder}
\maketitle
\begin{abstract}
Let $\Gamma_{g}$ be the fundamental group of a closed connected orientable
surface of genus $g\geq2$. We introduce a combinatorial structure
of \emph{core surfaces,} that represent subgroups of $\Gamma_{g}$.
These structures are (usually) 2-dimensional complexes, made up of
vertices, labeled oriented edges, and $4g$-gons. They are compact
whenever the corresponding subgroup is finitely generated. The theory
of core surfaces that we initiate here is analogous to the influential
and fruitful theory of Stallings core graphs for subgroups of free
groups.
\end{abstract}
\tableofcontents{}

\section{Introduction\label{sec:Introduction}}

In his influential paper \cite{stallings1983topology}, Stallings
introduced the simple yet powerful concept of \emph{core graphs},
sometimes known today under the name \emph{Stallings core graphs}.
Roughly, core graphs are connected, directed and edge-labeled graphs
in one-to-one correspondence with the (conjugacy classes of) subgroups
of a given f.g.~(finitely generated) free group. We give the exact
definition in Section \ref{sec:core-graphs} below. Core graphs are
especially useful when the corresponding subgroup is f.g., or, equivalently,
when the core graph is finite.

Inter alia, core graphs can be used to extract basic information about
the subgroup (index, rank) (for these and some of the applications
below consult \cite{stallings1983topology} and the surveys \cite{kapovich2002stallings,delgado2022list}).
They provide simple proofs to classical theorems, such as Howson's
theorem that the intersection of two f.g.~subgroups is f.g., Hall's
theorem that every f.g.~subgroup is a free factor in a finite index
subgroup, or Takahasi theorem that given a f.g.~subgroup $H$ of
the free group $\F$, every supergroup $H\le J\le\F$ is a free extension
of one of finitely many supergroups of $H$, to name a few. Core graphs
also give rise to algorithms for various natural problems: for instance,
determine the subgroup generated by a given set of words and the
membership of other words in it, or determine whether a given word
is primitive (a basis element) in a given subgroup. Finally, core
graphs take part in the proofs of more involved results such as in
\cite{PP15}.

In the current paper we wish to define an analogous notion, we call
\emph{core surfaces}, when a free group and its subgroups are replaced
by a surface group and its subgroups. Here, a surface group is the
fundamental group of $\Sigma_{g}$, a closed connected orientable
surface of genus $g\geq2$. We denote this group by $\Gamma_{g}$:
\begin{equation}
\Gamma_{g}\eqdf\pi_{1}\left(\Sigma_{g}\right)\cong\left\langle a_{1},b_{1},\ldots,a_{g},b_{g}\,\middle|\,\left[a_{1},b_{1}\right]\cdots\left[a_{g},b_{g}\right]\right\rangle .\label{eq:Gamma_g}
\end{equation}
In order to motivate our definition of a core surface, we first recall
one of the (equivalent) definitions of a core graph. Let $B_{r}$
be a bouquet consisting of a single vertex and $r$ petals, namely,
a wedge of $r$ copies of $S^{1}$. Denote the wedge point by $o$.
 We identify the fundamental group $\pi_{1}\left(B_{r},o\right)$
with $\F_{r}$, the free group of rank $r$.  Given a subgroup $\left\{ 1\right\} \ne H\le\F_{r}$,
consider the connected covering space $p\colon\Upsilon\to B_{r}$
corresponding to the conjugacy class of\footnote{Namely, $\Upsilon$ is the unique connected covering space such that
for some (and therefore every) vertex $v$ of $\Upsilon$, the image
$p_{*}\left(\pi_{1}\left(\Upsilon,v\right)\right)$ in $\pi_{1}\left(B_{r},o\right)$
is conjugate to $H$.} $H$. The core graph of $H$ is then the subgraph of $\Upsilon$
which is the union of all non-backtracking cycles in $\Upsilon$,
together with the restriction of the covering map $p$. In other words,
we remove from the covering space $\Upsilon$ all the ``hanging trees'',
which do not affect its fundamental group. Equivalently, this is the
unique smallest deformation retract of $\Upsilon$. A key advantage
of core graphs over the original covering spaces is that whenever
$H$ is $f.g.$ but not of finite index, the covering space of $H$
is an infinite graph, while the core graph is a finite one. 

Now let $\Gamma_{g}$ be as in (\ref{eq:Gamma_g}). Given a subgroup
$J\le\Gamma_{g}$, consider the covering space $\Upsilon$ of $\Sigma_{g}$
corresponding to the conjugacy class of $J$. We would like to take
a ``topological core'' of $\Upsilon$. Naturally, when $J$ is of
finite index, the covering $\Upsilon$ is a closed compact surface
and it makes sense to take it as the core surface of $J$. But what
is the appropriate definition when $J$ has infinite index in $\Gamma_{g}$? 

In particular, consider the case where $J$ is f.g.~but of infinite
index (in particular, $J$ is a f.g.~free group). Let us go through
some possible definitions of a core surface which we do \emph{not}
find appealing. 

\paragraph{Smallest retract}

Defining the core surface as a minimal retract would not work: $\Upsilon$
admits a minimal retract which is a finite graph, but this graph is
far from canonical. 

\paragraph{Geodesic boundary}

Another option is to ``trim'' pieces of $\Upsilon$ which, like
hanging trees, do not affect the homotopy type of $\Upsilon$. The
best analog of hanging trees in $\Upsilon$ are ``funnels'': non-compact
pieces that can be cut from $\Upsilon$ by a simple closed curve and
are then homeomorphic to a once-punctured disc. Funnels make $\Upsilon$
non-compact even when $J$ is f.g., and cutting outside a certain
simple closed curve around every funnel leaves us with a compact retract
of $\Upsilon$. The question is, though, which curve should be used
for that. One possibility is to give $\Sigma_{g}$ a hyperbolic structure
(so a Riemannian geometry with constant curvature $-1$). This geometry
can be pulled back to the cover $\Upsilon$. In hyperbolic surfaces,
the homotopy class of every closed curve has a unique geodesic representative,
and one can cut along the unique geodesic representing the simple
closed curve around every funnel of $\Upsilon$. Although this definition
is natural and appealing, the construction is not combinatorial and
therefore loses many of the flexibility we have with Stallings core
graphs.

\medskip{}

The definition we do give is a combinatorial construction which is
close in spirit to the ``geodesic boundary'' definition. Consider
the construction of $\Sigma_{g}$ from a $4g$-gon by identifying
its edges in pairs according to the pattern $a_{1}b_{1}a_{1}^{-1}b_{1}^{-1}\ldots a_{g}b_{g}a_{g}^{-1}b_{g}^{-1}$.
This gives rise to a CW-structure on $\Sigma_{g}$ consisting of one
vertex (denoted $o$), $2g$ oriented $1-$cells (denoted $a_{1},b_{1},\ldots,a_{g},b_{g}$)
and one $2$-cell which is the $4g$-gon glued along $4g$ $1$-cells\footnote{We use the terms vertices and edges interchangeably with $0$-cells
and $1$-cells, respectively.}. See Figure \ref{fig:Sigma_2} (in our running examples with $g=2$,
we denote the generators of $\Gamma_{2}$ by $a,b,c,d$ instead of
$a_{1},b_{1},a_{2},b_{2}$). We identify $\Gamma_{g}$ with $\pi_{1}\left(\Sigma_{g},o\right)$,
so that in the presentation (\ref{eq:Gamma_g}), words in the generators
$a_{1},\ldots,b_{g}$ correspond to the homotopy class of the corresponding
closed paths based at $o$ along the $1$-skeleton of $\Sigma_{g}$.
Note that every covering space $p\colon\Upsilon\to\Sigma_{g}$ inherits
a CW-structure from $\Sigma_{g}$: the vertices are the pre-images
of $o$, and the open $1$-cells (2-cells) are the connected components
of the pre-images of the open 1-cells (2-cell, respectively) in $\Sigma_{g}$.

\begin{figure}
\begin{centering}
\includegraphics[viewport=120.1474bp 0bp 489bp 215.933bp,scale=0.7]{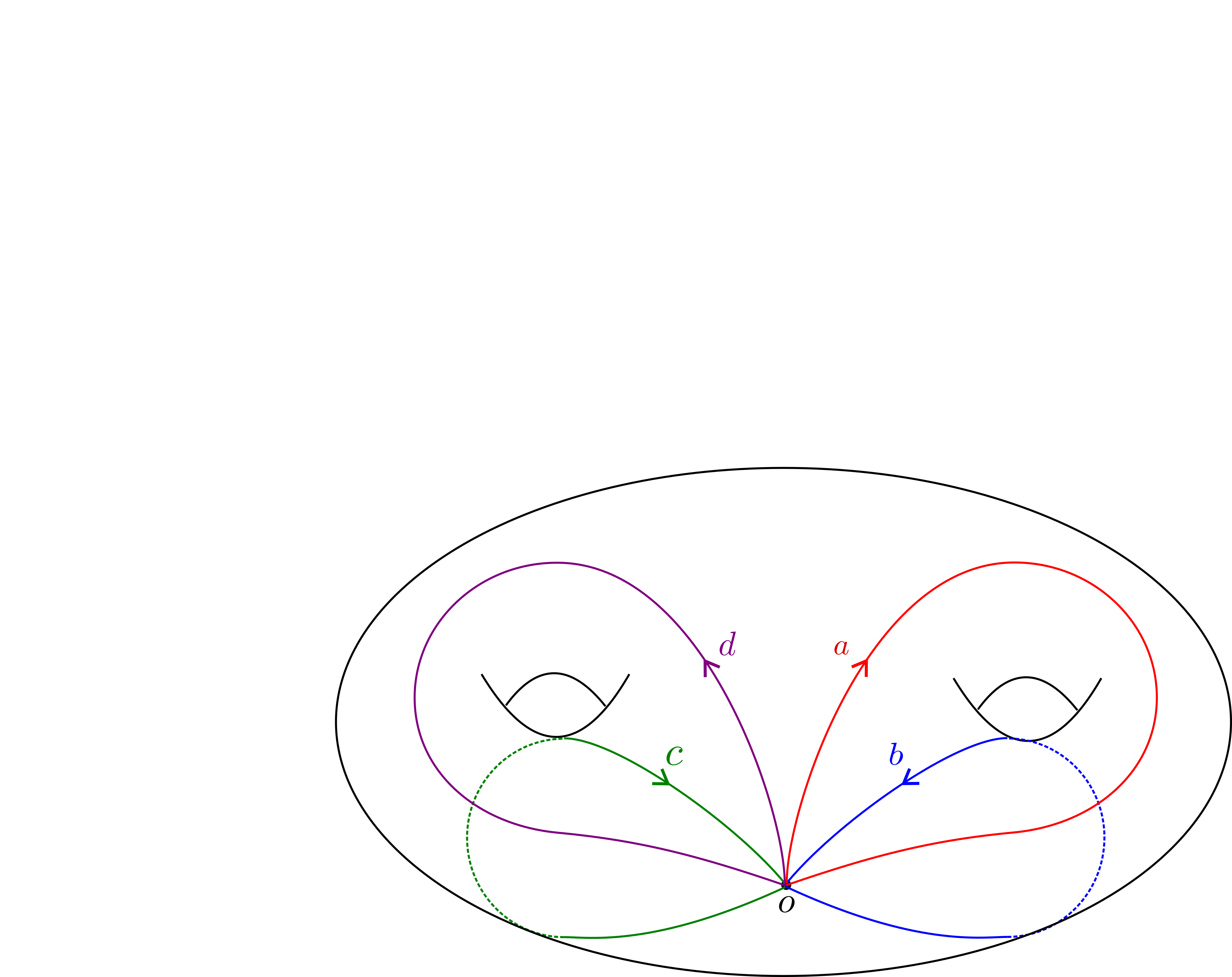}
\par\end{centering}
\caption{\label{fig:Sigma_2}The CW-structure we give to the surface $\Sigma_{2}$
with fundamental group $\left\langle a,b,c,d\,\middle|\,\left[a,b\right]\left[c,d\right]\right\rangle $:
it consists of a single vertex ($0$-cell), four edges ($1$-cells)
and one octagon (a $2$-cell).}
\end{figure}

\begin{defn}[Core surface]
\label{def:core-surface} Given a subgroup $J\le\Gamma_{g}=\pi_{1}\left(\Sigma_{g},o\right)$,
consider the covering space $p\colon\Upsilon\to\Sigma_{g}$ corresponding
to $J$. Define the \textbf{core surface of $J$}, denoted $\core\left(J\right)$,
as a sub-covering space of $\Upsilon$ as follows: $\left(i\right)$
take the union of all shortest-representative cycles in the 1-skeleton
$\Upsilon^{\left(1\right)}$ of every free-homotopy class of essential
closed curve in $\Upsilon$, and $\left(ii\right)$ add every connected
component of the complement which contains finitely many $2$-cells.

For completeness define the core surface of the trivial subgroup to
be the $0$-dimensional complex consisting of a single vertex mapped
to $o$.
\end{defn}

We define $\core\left(J\right)$ as a subcomplex of a covering space
of $\Sigma_{g}$, but we usually think of it as an at-most 2-dimensional
$CW$-complex with 1-cells that are directed and labeled by $a_{1},b_{1},\ldots,a_{g},b_{g}$.
These directions and labels on every $1$-cell completely determine
the restricted covering map. Three core surfaces are illustrated in
Figure \ref{fig:core surfaces - examples} and two others in Figure
\ref{fig:SBR may not terminate}.

\begin{figure}
\begin{centering}
\includegraphics[viewport=0bp 0bp 496bp 155.9673bp]{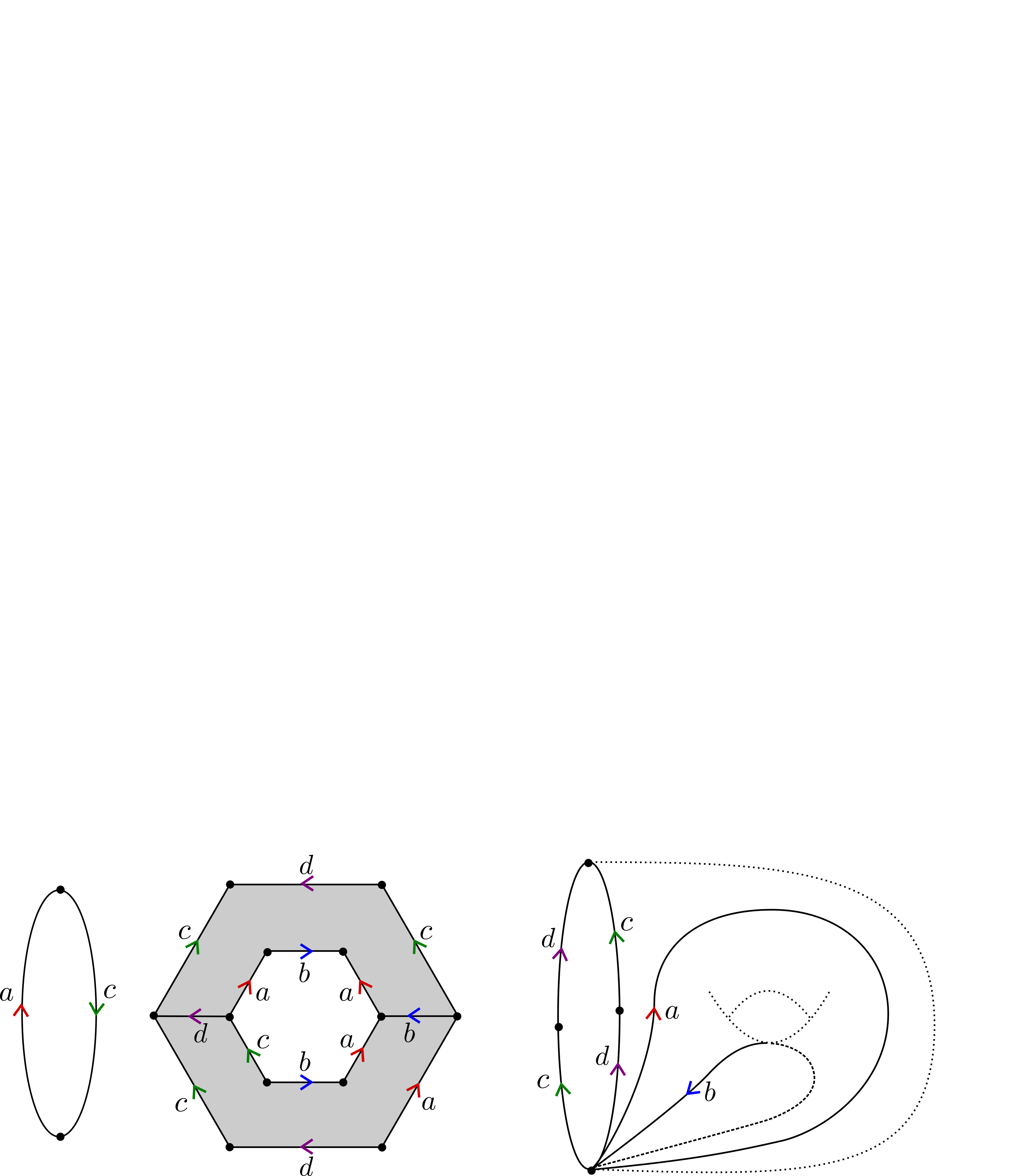}
\par\end{centering}
\caption{\label{fig:core surfaces - examples}Consider $\Gamma_{2}=\left\langle a,b,c,d\,\middle|\,\left[a,b\right]\left[c,d\right]\right\rangle $.
On the left is the core surface $\protect\core\left(\left\langle ac\right\rangle \right)$.
It consists of two vertices, two edges and no $2$-cells. The middle
object is $\protect\core\left(\left\langle aba^{-2}b^{-1}c\right\rangle \right)$,
consisting of $12$ vertices, $14$ edges and two octagons. Topologically
it is an annulus. On the right is the core surface $\protect\core\left(\left\langle a,b\right\rangle \right)$.
It consists of four vertices, six edges and one octagon, and topologically
it is a genus-1 torus with one boundary component.}
\end{figure}

After having fixed the representation (\ref{eq:Gamma_g}) for $\Gamma_{g}$,
the core surfaces are unique for every conjugacy class of subgroups.
Below, we give an intrinsic description of a core-surface which allows
one to identify a core surface without knowledge of the full covering
space it originates from (Proposition \ref{prop:intrinsic def of core surface}),
we show how to construct the core surface of $J$ from a set of generators
using a ``folding'' process (Theorem \ref{thm:folding}), and prove
a one-to-one correspondence between core surfaces and conjugacy classes
of subgroups of $\Gamma_{g}$ (Section \ref{subsec:Properties of core surfaces}).
We also show some basic properties of core surface. For instance,
we prove (Proposition \ref{prop:properties of core surfaces}) that
$\core\left(J\right)$ is connected and that it is a retract of the
covering space $\Upsilon$, and show (Proposition \ref{prop:core surfaces of fg groups are compact})
that it is compact whenever $J$ is a f.g.~subgroup. We also prove
(Lemma \ref{lem:morphisms of core surfaces}) that whenever $H\le J$
there is a natural morphism $\core\left(H\right)\to\core\left(J\right)$.

\subsubsection*{Random coverings of surfaces}

We were led to the concept of core surfaces by our work on random
homomorphisms from $\Gamma_{g}$ to the symmetric group $S_{N}$ \cite{magee2020asymptotic},
as part of a project on spectral gaps in random covering spaces of
a fixed hyperbolic surface (see \cite{magee2022random}). Within this
work we use core surfaces and the other concepts in the current paper
to prove a theorem which is parallel to some extent to Takahasi's
theorem for free groups. 
\begin{thm}
\cite{magee2020asymptotic}\label{thm:like-takahasi} Let $J\le\Gamma_{g}$
be finitely generated. Then there is a finite list of subgroups $H_{1},\ldots,H_{r}\le\Gamma_{g}$
with a fixed pointed sub-surface $\left(Y_{i},y_{i}\right)$ which
is a deformation retract of the covering space of $\Sigma_{g}$ corresponding
to $H_{i}$, so that $\left(i\right)$ $J\le H_{i}$ for all $i$,
and $\left(ii\right)$ for every subgroup $J\le L\le\Gamma_{g}$,
there is exactly one $1\le i\le r$ such that $\left(Y_{i},y_{i}\right)$
is embedded in the pointed covering of $\Sigma_{g}$ corresponding
to $L$. Moreover, this embedding is $\pi_{1}$-injective. 
\end{thm}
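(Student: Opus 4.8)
The plan is to mimic the free-group proof of Takahasi's theorem via Stallings graphs, replacing core graphs with core surfaces. First I would attach the basepoint: pass from the conjugacy class of $J$ to a pointed core surface $\left(\core\left(J\right),x_{0}\right)$, where $x_{0}$ maps to $o$; since $J$ is finitely generated, Proposition \ref{prop:core surfaces of fg groups are compact} guarantees that $\core\left(J\right)$ is compact, hence has finitely many vertices, edges and $2$-cells. The candidate list $H_{1},\dots,H_{r}$ will be built from the finitely many \emph{intermediate} pointed complexes through which a morphism $\core\left(J\right)\to\core\left(L\right)$ can factor. Concretely, using Lemma \ref{lem:morphisms of core surfaces}, any inclusion $J\le L$ induces a based morphism $\core\left(J\right)\to\core\left(L\right)$; I would look at the image subcomplex, or rather a quotient of $\core\left(J\right)$ obtained by performing some identifications (foldings) among its cells and then possibly filling in some of its boundary $4g$-gons. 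The key finiteness input is that $\core\left(J\right)$ has only finitely many cells, so there are only finitely many ways to quotient it by a combinatorial equivalence relation and finitely many subsets of ``missing'' octagons one could glue on; each such operation yields a pointed complex $\left(Y_{i},y_{i}\right)$, and one defines $H_{i}$ to be the subgroup of $\Gamma_{g}$ it represents (i.e.\ the image of $\pi_{1}$ under the covering-type map to $\Sigma_{g}$). By construction $J\le H_{i}$ for every $i$.

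Next I would prove the existence-and-uniqueness clause (ii). Given $J\le L\le\Gamma_{g}$, form $\core\left(L\right)$ and the morphism $\varphi\colon\core\left(J\right)\to\core\left(L\right)$ from Lemma \ref{lem:morphisms of core surfaces}. The morphism factors as a composition: first a surjection onto a quotient (the combinatorial image, where $\varphi$ identifies cells), then an inclusion of a subcomplex into $\core\left(L\right)$. The quotient part depends only on $\core\left(J\right)$ and is one of finitely many possibilities — this selects the index $i$, and $\left(Y_{i},y_{i}\right)$ is exactly that quotient (with the appropriate octagons filled so that it is already ``saturated'' in the sense of a core surface, which is why I allow the octagon-filling step in building the list). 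Then $\left(Y_{i},y_{i}\right)$ embeds into $\core\left(L\right)$, and since core surfaces are $\pi_{1}$-injective images of their covering spaces (Proposition \ref{prop:properties of core surfaces}: $\core\left(L\right)$ is a retract of $\Upsilon_{L}$, hence $\pi_{1}$-injective in it), the composite $Y_{i}\hookrightarrow\core\left(L\right)\hookrightarrow\Upsilon_{L}$ is $\pi_{1}$-injective; this gives the required embedded, $\pi_{1}$-injective copy in the pointed covering space of $L$. For uniqueness, if $\left(Y_{i},y_{i}\right)$ and $\left(Y_{j},y_{j}\right)$ both embed $\pi_{1}$-injectively into the covering of $L$, then both sit inside $\core\left(L\right)$ (one must check the embedded copy lands in the core, not in a funnel or hanging tree, using that the embedding is $\pi_{1}$-injective and that the $Y_{i}$ carry all of the relevant homotopy), and both receive the map from $\core\left(J\right)$ compatibly; by the intrinsic characterization of core surfaces (Proposition \ref{prop:intrinsic def of core surface}) and the one-to-one correspondence between core surfaces and conjugacy classes of subgroups (Section \ref{subsec:Properties of core surfaces}), this forces $i=j$.

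The main obstacle I anticipate is the octagon-filling bookkeeping: unlike Stallings graphs, a core surface is genuinely $2$-dimensional, and the morphism $\core\left(J\right)\to\core\left(L\right)$ can both fold $1$-cells together and create new $2$-cells that were not present in $\core\left(J\right)$ (because an essential curve that bounded nothing upstairs may bound a disc downstairs). One has to argue that all such new $2$-cells that appear are among a controlled, finite set — essentially, octagons whose boundary path is already traced by the $1$-skeleton of some quotient of $\core\left(J\right)$ — so that the list of $\left(Y_{i},y_{i}\right)$ stays finite. Making this precise presumably relies on the compactness of $\core\left(J\right)$ together with the local structure of core surfaces (every $1$-cell lies on the boundary of at most the $4g$-gons forced by the CW-structure of $\Sigma_{g}$), and on the folding algorithm of Theorem \ref{thm:folding} to see that $\core\left(L\right)$ is obtained from $\core\left(J\right)$ by a finite sequence of folds and fillings. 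A secondary technical point is verifying that the embedded image genuinely lies in $\core\left(L\right)$ rather than merely in $\Upsilon_{L}$; this should follow because $Y_{i}$, being (a filling of) a quotient of the compact core surface $\core\left(J\right)$, contains no funnels or hanging trees of its own and its essential curves are essential in $\Upsilon_{L}$ by $\pi_{1}$-injectivity, so every cell of $Y_{i}$ is forced into the core by Definition \ref{def:core-surface}.
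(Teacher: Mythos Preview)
The paper does not prove this theorem here; it is quoted from \cite{magee2020asymptotic} (Theorem 2.14 and Proposition 2.15 there), so there is no proof in this paper to compare against directly. That said, your outline has the right overall shape---a Takahasi-style enumeration of intermediate complexes---but there is a genuine gap in how you build the $Y_i$ and in how you argue $\pi_1$-injectivity.

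You define $Y_i$ as a quotient of $\core(J)$ with ``octagons whose boundary path is already traced by the $1$-skeleton'' glued in, and then claim the composite $Y_i\hookrightarrow\core(L)\hookrightarrow\Upsilon_L$ is $\pi_1$-injective because $\core(L)\hookrightarrow\Upsilon_L$ is. But that only handles the second arrow; you never justify that $Y_i\hookrightarrow\core(L)$ is $\pi_1$-injective. The relevant tool is Corollary~\ref{cor:BR subsurface is pi1-injective}, which needs $Y_i$ to be \emph{boundary reduced}. A bare image of $\core(J)$ with the obvious $4g$-gons filled in is typically not boundary reduced: a long block can appear on its boundary whose complement uses edges that are not in the image at all. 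So the correct $Y_i$ should be the $\br$-closure (inside $\core(L)$, or inside $\Upsilon_L$) of the image of $\core(J)$, and this closure adds new vertices and edges, not only $2$-cells---your ``octagon-filling bookkeeping'' description is too restrictive and your finiteness heuristic (``only octagons whose boundary is already there'') is false as stated.

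Once you pass to $\br$-closures, finiteness is recovered, but not by your argument. The proof of Proposition~\ref{prop:BR closure of compact is compact} shows that forming $\br(Y\hookrightarrow Z)$ adds at most $\tfrac{1}{2}|\partial Y|$ closed $4g$-gons, so the number of cells in the closure is bounded \emph{uniformly in $Z$} by a function of $|\partial Y|$ alone. Since there are only finitely many quotients of the compact $\core(J)$, and each has bounded boundary length, there are only finitely many abstract compact tiled surfaces that can arise as $\br(\text{image of }\core(J)\hookrightarrow Z)$ over all $Z$; this is the finite list. Your appeal to Theorem~\ref{thm:folding} to ``see that $\core(L)$ is obtained from $\core(J)$ by a finite sequence of folds and fillings'' is off target: that algorithm builds $\core(L)$ from a generating set of $L$, not from $\core(J)$, and in general $\core(L)$ is vastly larger than anything one reaches from $\core(J)$ alone.
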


This theorem is essentially Theorem 2.14 and Proposition 2.15 in \cite{magee2020asymptotic},
but see also some clarifications in \cite[Section 3.3]{Zimhoni}.
The main goal of \cite{magee2020asymptotic} is to study the average
number of elements in $\left\{ 1,\ldots,N\right\} $ which are fixed
by all permutations in $\theta\left(J\right)$ when $\theta$ is a
random homomorphism $\theta\colon\Gamma_{g}\to S_{N}$. Theorem \ref{thm:like-takahasi}
is used in \cite{magee2020asymptotic} to find the asymptotics of
this number as $N\to\infty$. The main ingredient here, which hints
to how Theorem \ref{thm:like-takahasi} is used, is to show that if
$J\le\Gamma_{g}$ is f.g., then the expected number of embeddings
of $\core\left(J\right)$ into a random $N$-sheeted covering space
of $\Sigma_{g}$ is $N^{\chi\left(J\right)}\left(1+O\left(N^{-1}\right)\right)$.
\begin{rem}
Some of the content of this paper first appeared as part of the first
version of \cite{magee2020asymptotic}. Later on we decided to split
that paper into two in order to make it shorter and as we believe
the current paper is interesting for its own sake and to a potentially
different audience. We also significantly expanded the content of
the current paper.
\end{rem}

\begin{rem}
There have been different successful attempts at generalizing the
concepts of Stallings core graphs and of Stallings folds. A non-exhaustive
list includes \cite{arzhantseva1996generality,arzhantseva1998generic,kharlampovich2017stallings,brown2017geometric,beeker2018stallings,dani2021subgroups,ben2022folding}
(see \cite[Section 1]{delgado2022list} for a more exhaustive one).
Some of these works include surface groups as special cases: \cite{kharlampovich2017stallings}
introduces Stallings graphs for a large family of groups which includes
surface groups using certain types of languages of representatives
of the elements of the groups; and \cite{beeker2018stallings,dani2021subgroups,ben2022folding}
introduce Stallings-like techniques for fundamental groups of CAT$\left(0\right)$
cube complexes, a family that includes, again, hyperbolic surface
groups. Moreover, the folding process we suggest in Section \ref{subsec:Foldings-and-construction}
has the same rough structure as the one introduced in \cite{dani2021subgroups,ben2022folding}.
However, because our generalization of Stallings core graph is much
more specific and hands-on, we believe that our approach is more natural
for surface groups. In particular, as mentioned above, our core surfaces
appear quite naturally in the study of random coverings of surfaces.
\end{rem}

\subsubsection*{Notation}

We denote by $Y^{\left(1\right)}$ the $1$-skeleton of a CW-complex
$Y$. We let $g^{G}$ denote the conjugacy class of the element $g$
in a group $G$. Below, $\mathbb{Y}$ will sometimes denote the thick
version of a tiled surface, $\partial Y$ its boundary and $\left|\partial Y\right|$
the length of $\partial Y$ (Definition \ref{def:thick-version}).
The notation $Y_{+}$ refers to a tiled surface with hanging half-edges
(see page \pageref{Def of Y+}). The universal cover of the surface
$\Sigma_{g}$ is denoted $\tsg$, and for a path ${\cal P}$ or cycle
${\cal C}$ in $Y^{\left(1\right)}$, we let ${\cal P}^{*}$ and ${\cal C}^{*}$,
respectively, denote their inverses.

\subsubsection*{Paper outline}

In Section \ref{sec:core-graphs} we recall the notion of Stallings
core graphs in a more precise manner than above. In the following
two sections, before studying core surfaces per se, we introduce three
more general types of combinatorial surfaces which are sub-complexes
of covering spaces of $\Sigma_{g}$. The most general concept is that
of \emph{tiled surfaces }which is described in Section \ref{sec:Tiled-surfaces}.
This is followed by the more restricted classes of boundary reduced
and strongly boundary reduced tiled surfaces, which are defined and
analyzed in Section \ref{sec:BR and SBR}. These three classes are
natural in our context and are also important in our work on random
covering surfaces. Finally, Section \ref{sec:core surfaces} returns
to core surfaces and proves many of their properties. 

\subsection*{Acknowledgments }

This project has received funding from the European Research Council
(ERC) under the European Union’s Horizon 2020 research and innovation
programme (grant agreement No 850956 and grant agreement No 949143).
It was also supported by the Israel Science Foundation: ISF grant
1071/16.

\section{Stallings Core graphs\label{sec:core-graphs}}

Let $X=\left\{ x_{1},\ldots,x_{r}\right\} $ be a basis of the free
group $\F_{r}$, and consider the bouquet $B_{r}$ of $r$ circles
with distinct labels from $X$ and arbitrary orientations and with
a wedge point $o$. Then $\pi_{1}\left(B_{r},o\right)$ is naturally
identified with $\F_{r}$. An $X$-graph $G$ is then a graph equipped
with a graph morphism $G\to B_{r}$ which is an immersion, namely,
it is locally injective. Equivalently, $G$ is a directed graph with
edges labeled by the elements of $X$, such that no vertex admits
two outgoing edges with the same label nor two incoming edges with
the same label. An $X$-graph $G$ is an ($X$-labeled) core graph
if it is connected and if every vertex belongs to some cyclically
reduced cycle. If $G$ is finite, the latter is equivalent to $G$
being connected and having no leaves. (One usually also considers
the isolated vertex graph to be a core graph.) Multiple edges between
two vertices and loops at vertices are allowed.

There is a natural one-to-one correspondence between finite $X$-labeled
core graphs and conjugacy classes of f.g.~subgroups\footnote{Sometimes core graphs are defined with a basepoint, which is allowed
to be a leaf. Then, the correspondence is between core graphs and
subgroups, not conjugacy classes of subgroups. We present here the
non-based version because this is the version we think is more elegant
in the definition of core surfaces.} of $\F_{r}$. Indeed, given a core graph $G$ as above, pick an arbitrary
vertex $v$ and consider the ``labeled fundamental group'' $\pi_{1}^{\mathrm{lab}}\left(G,v\right)$:
closed paths in a graph with oriented and $X$-labeled edges correspond
to words in the elements of $X$. In other words, if $p\colon G\to B_{r}$
is the immersion, then $\pi_{1}^{\mathrm{lab}}\left(G,v\right)$ is
the subgroup $p_{*}\left(\pi_{1}\left(G,v\right)\right)$ of $\pi_{1}\left(B_{r},o\right)=\F_{r}$.
The conjugacy class of $\pi_{1}\left(G,v\right)$ is independent of
the choice of $v$ and is the conjugacy class corresponding to $G$. 

Conversely, if $H\le\F_{r}$ is a non-trivial f.g.~subgroup, the
conjugacy class $H^{\F_{r}}$ corresponds to a finite core graph,
denoted $G_{X}\left(H\right)$, which can be obtained in several equivalent
manners. For example, let $\Upsilon$ be the topological covering
space of $B_{r}$ corresponding to $H^{\F_{r}}$, which is equal in
this case to the Schreier graph depicting the action of $\F_{r}$
on the right cosets of $H$ with respect to the generators $X$. Then
$G_{X}\left(H\right)$ is obtained from $\Upsilon$ by 'pruning all
hanging trees', or, equivalently, as the union of all non-backtracking
cycles in $\Upsilon$.\footnote{The only exception is when $H$ is the trivial group, in which case
$\Upsilon$ is a tree and we define $\Gamma_{B}$ to consist of a
single vertex and no edges.} One can also construct $G_{X}\left(H\right)$ from any finite generating
set of $H$ using ``Stallings foldings''. Finally, a core graph
morphism is a graph morphism which commutes with the immersions to
$B_{r}$. Given two subgroups $H,J\le\F_{r}$, there is a core graph
morphism $G_{X}\left(H\right)\to G_{X}\left(J\right)$ if and only
if some conjugate of $H$ is a subgroup of $J$. See \cite{stallings1983topology,kapovich2002stallings,puder2014primitive,PP15,delgado2022list}
for more details about foldings and about core graphs in general.
See also \cite{hanany2020word} for the category of not-necessarily-connected
core graphs. As we show in the coming sections, many of the basic
properties of core graphs hold for core surfaces as well. 

\section{Tiled surfaces\label{sec:Tiled-surfaces}}

\subsection{Basic definitions}

We first define an object called a ``tiled surface'' which is the
analog of an ``$X$-graph'' from Section \ref{sec:core-graphs}.
Recall that $\Sigma_{g}$ is equipped with a CW-structure with one
vertex, $2g$ edges and a single $4g$-gon, and that this CW-structure
can be pulled back to every covering space of $\Sigma_{g}$. A sub-complex
of a CW-complex is a subspace consisting of cells such that if some
cell is in the subcomplex, than so are the cells of smaller dimension
at its boundary. 
\begin{defn}[Tiled surface]
\label{def:tiled-surface}A \emph{($g$-) tiled surface} $Y$ is
a sub-complex of a (not-necessarily-connected) covering space of $\Sigma_{g}$.
In particular, a tiled surface is equipped with the restricted covering
map $p\colon Y\to\Sigma_{g}$ which is an immersion. 
\end{defn}

Alternatively, instead of considering a tiled surface $Y$ to be a
complex equipped with a restricted covering map, one may consider
$Y$ to be a complex as above with directed and labeled edges: the
directions and labels ($a_{1},b_{1},\ldots,a_{g},b_{g}$) are pulled
back from $\Sigma_{g}$ via $p$. These labels uniquely determine
$p$ as a combinatorial map between complexes. Figures \ref{fig:Sigma_2},
\ref{fig:core surfaces - examples} and \ref{fig:SBR may not terminate}
feature examples of tiled surfaces.

Note that a tiled surface is neither necessarily compact nor necessarily
connected. Also note that a tiled surface is not always a surface:
it may contain, for example, vertices or edges with no $2$-cells
incident to them. However, as $Y$ is a sub-complex of a covering
space of $\Sigma_{g}$, namely, of a surface, any neighborhood of
$Y$ inside the covering is a surface, and it is sometimes beneficial
to think of $Y$ as such. 
\begin{defn}[Thick version of a tiled surface]
\label{def:thick-version} Given a tiled surface $Y$ which is a
subcomplex of the covering space $\Upsilon$ of $\Sigma_{g}$, consider
a small, closed, regular neighborhood of $Y$ in $\Upsilon$. The
resulting closed surface, possibly with boundary, is referred to as
the \emph{thick version of $Y$}, and is occasionally denoted $\mathbb{Y}$.
We let $\partial Y$ denote the boundary of the thick version of $Y$
and $\left|\partial Y\right|$ denote the number of edges along $\partial Y$
(so if an edge of $Y$ does not border any $4g$-gon, it is counted
twice). 
\end{defn}

In particular, $\left|\partial Y\right|=2e-4gf$ where $e$ is the
number of edges and $f$ the number of $4g$-gons in $Y$. We stress
that we do not think of $Y$ as a sub-complex, but rather as a complex
for its own sake, which happens to have the capacity to be realized
as a subcomplex of a covering space of $\Sigma_{g}$. Namely, the
underlying proper covering space in Definition \ref{def:tiled-surface}
is not part of the data possessed by $Y$. Indeed, one can also give
a direct combinatorial definition of a tiled surface. Moreover, the
thick version of a tiled surface is, too, independent of the covering
it is a subcomplex of. This is shown by the following claims.
\begin{prop}[An intrinsic definition of a tiled surface]
\label{prop:comb-def-of-tiled-surface} The following definition
is equivalent to Definition \ref{def:tiled-surface}: Fix an integer
$g\ge2$. A ($g$-) tiled surface $Y$ is an at-most two-dimensional
$\mathrm{CW}$-complex with an assignment of both a direction and
a label in $\{a_{1},b_{1},\ldots,a_{g},b_{g}\}$ to each edge, such
that:
\begin{itemize}
\item \textbf{P1: }Every vertex of $Y$ has at most one incoming $\lab$-labeled
edge and at most one outgoing $\lab$-labeled edge, for each $\lab\in\{a_{1},b_{1},\ldots,a_{g},b_{g}\}$\@. 
\item \textbf{P2: }Every path in the $1$-skeleton $Y^{\left(1\right)}$
of $Y$ reading a word in $\left\{ a_{1}^{\pm1},b_{1}^{\pm1},\ldots,a_{g}^{\pm1},b_{g}^{\pm1}\right\} $
which equals the identity in $\Gamma_{g}$ must be closed\footnote{Here, we read $a_{i}$ if we traverse an $a_{i}$-labeled edge in
its correct direction, and $a_{i}^{-1}$ if we traverse an $a_{i}$-labeled
edge against its direction, and so on.}. 
\item \textbf{P3: }Every 2-cell in $Y$ is a $4g$-gon glued along a closed
path reading the relation $\left[a_{1},b_{1}\right]\ldots\left[a_{g},b_{g}\right]$
of $\Gamma_{g}$, and every such closed path is the boundary of at
most one such $4g$-gon.
\end{itemize}
\end{prop}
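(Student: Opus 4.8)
The plan is to prove equivalence of the two definitions by exhibiting both implications, the ``only if'' direction being almost immediate and the ``if'' direction requiring a genuine reconstruction of the covering space. First I would verify that a tiled surface in the sense of Definition \ref{def:tiled-surface} satisfies P1--P3. Property P1 holds because a covering map restricted to the star of a vertex is injective, and $\Sigma_g$ has a single vertex incident to exactly one incoming and one outgoing edge for each label; any sub-complex inherits this local injectivity. Property P3 holds because every $2$-cell of the covering $\Upsilon$ maps homeomorphically onto the unique $4g$-gon of $\Sigma_g$, so its attaching path reads the relator, and two distinct $2$-cells of $\Upsilon$ cannot share the same boundary path in $\Upsilon^{(1)}$ since they would then have the same image and the same lift of the attaching map, hence coincide. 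Property P2 is the statement that if a based loop in $\Upsilon^{(1)}$ maps to a null-homotopic loop in $\Sigma_g$, then it is itself closed: this is exactly the fact that $p_*\colon\pi_1(\Upsilon^{(1)})$-paths project to $\pi_1(\Sigma_g)$, combined with the unique path lifting property of covering spaces --- a path in $\Sigma_g^{(1)}$ spelling the trivial word lifts to a closed path, and the given path is one such lift.

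For the converse, suppose $Y$ is an at-most-$2$-dimensional CW-complex with labeled directed edges satisfying P1--P3; I must realize $Y$ as a sub-complex of some covering space of $\Sigma_g$. The idea is to build the covering space by a ``completion'' or ``saturation'' procedure: start from $Y$ and repeatedly attach the missing cells dictated by the local structure of $\Sigma_g$. Concretely, form the disjoint union of $Y$ with copies of $\Sigma_g^{(1)}$-stars and $4g$-gons and glue along the obvious labeled identifications, iterating transfinitely (or taking a direct limit) until every vertex has a full incoming and outgoing edge of each label and every boundary path reading the relator bounds a $4g$-gon. Call the result $\Upsilon$; there is an obvious label-preserving cellular map $p\colon\Upsilon\to\Sigma_g$. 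One then checks that $p$ is a covering map: surjectivity onto each cell and the local-homeomorphism property on $1$-skeleta follow from P1 together with the saturation (every vertex of $\Upsilon$ has a complete star), and the $2$-cell condition follows from P3 together with the fact that, after saturation, every relator-reading closed path bounds \emph{exactly} one $4g$-gon --- here one uses the van Kampen / Dehn diagram structure of $\Gamma_g$ to see that the saturation does not accidentally force two $4g$-gons onto the same boundary. Finally, P2 is precisely what guarantees that the inclusion $Y\hookrightarrow\Upsilon$ is injective, i.e.\ that distinct cells of $Y$ are not identified during saturation: two vertices of $Y$ joined by a path in $\Upsilon^{(1)}$ spelling the identity would, by P2, already be joined by a closed path in $Y$, hence be the same vertex, and similarly for edges and $2$-cells.

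I expect the main obstacle to be making the saturation construction precise and proving that the natural map $Y\to\Upsilon$ remains an embedding --- in other words, that nothing in $Y$ gets collapsed. The subtlety is that when we attach a new $4g$-gon along a relator-reading path, its other boundary edges and vertices might a priori be identified with pre-existing ones in an uncontrolled way, and one needs P2 (applied in the growing complex, which continues to satisfy P1--P3 by construction) to rule this out. A clean way to organize this is to define $\Upsilon$ directly as a quotient: take the set of reduced-word-classes (equivalently, elements of $\Gamma_g$ acting on the relevant coset-like structure) paired with the data of $Y$, and let P2 show the quotient map is injective on $Y$. Alternatively, one can phrase the whole argument via the universal cover $\tsg$ of $\Sigma_g$ and the Cayley $2$-complex of $\Gamma_g$, mapping $Y$ compatibly into a quotient of $\tsg$; P1--P3 are exactly the conditions making such a map well-defined and locally injective. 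Either way, once the embedding is established, $\Upsilon$ is a covering space of $\Sigma_g$ containing $Y$ as a sub-complex, completing the equivalence.
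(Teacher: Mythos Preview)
Your forward direction is fine and matches the paper. For the converse, the paper takes a substantially more direct route than your saturation: it defines $p\colon Y\to\Sigma_g$ from the labels, reduces to $Y$ connected, sets $J=p_*(\pi_1(Y,v))$ for a chosen vertex $v$, lets $\Upsilon$ be the covering of $\Sigma_g$ corresponding to $J$, and invokes the lifting criterion (Hatcher, Propositions 1.33--1.34) to produce $r\colon(Y,v)\to(\Upsilon,u)$. Injectivity of $r$ is then a three-line argument: if $r(v')=r(v)$ with $v'\ne v$, any path in $Y^{(1)}$ from $v$ to $v'$ reads a word $w_1$ representing some $j\in J$; since $J=p_*(\pi_1(Y,v))$ there is a \emph{closed} path at $v$ in $Y^{(1)}$ reading some $w_2$ representing the same $j$; the concatenation reads $w_2^{-1}w_1=1$ in $\Gamma_g$ but runs from $v$ to $v'$, contradicting P2. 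This sidesteps your ``main obstacle'' entirely---no transfinite construction, no need to verify that intermediate complexes satisfy P1--P3.

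Your saturation approach is plausible but the injectivity sketch has a real gap. You write that two vertices of $Y$ identified in $\Upsilon$ would be ``joined by a path in $\Upsilon^{(1)}$ spelling the identity'', and then invoke P2. But P2 is a hypothesis about paths in $Y^{(1)}$, not in $\Upsilon^{(1)}$, so you cannot apply it directly; and there is no reason a priori that two vertices of $Y$ which collapse in the saturation are connected \emph{in $Y$} by an identity-reading path---what the saturation actually forces is that they are connected in $Y$ by a path whose word lies in $p_*(\pi_1(\Upsilon))$, and you have not identified that subgroup. To repair this you would effectively have to show that the saturation produces the covering corresponding to $p_*(\pi_1(Y))$ and that the map from $Y$ to it is the lift, i.e.\ rederive the paper's argument by hand. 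Your closing alternative via a quotient of $\tsg$ is much closer to what the paper does and is the route to pursue.
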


Note that property \textbf{P1 }is equivalent to that $Y^{\left(1\right)}$
is an $\left\{ a_{1},\ldots,b_{g}\right\} $-graph, in the sense of
Section \ref{sec:core-graphs}.
\begin{proof}
It is straightforward that a tiled surface from Definition \ref{def:tiled-surface}
satisfies the assumptions in the statement of the proposition. Conversely,
let $Y$ be a $CW$-complex as in the statement of the proposition.
It is enough to show that every connected component of $Y$ is a tiled
surface à la Definition \ref{def:tiled-surface}, so assume without
loss of generality that $Y$ is connected and non-empty. Define a
map $p\colon Y\to\Sigma_{g}$ by sending every vertex of $Y$ to $o$,
every edge of $Y$ to the corresponding edge of $\Sigma_{g}$ (with
the same label and corresponding direction), and every $4g$-gon of
$Y$ to the single $4g$-gon of $\Sigma_{g}$. This map is a well-defined
map of $CW$-complexes.

Now pick an arbitrary vertex $v$ in $Y$. Denote $J\eqdf p_{*}\left(\pi_{1}\left(Y,v\right)\right)\le\pi_{1}\left(\Sigma_{g},o\right)=\Gamma_{g}$.
Let $q\colon\Upsilon\to\Sigma_{g}$ be the connected covering space
corresponding to $J$, and let $u$ be a vertex of $\Upsilon$ so
that $q_{*}\left(\pi_{1}\left(\Upsilon,u\right)\right)=J$. By standard
facts from the theory of covering spaces \cite[Propositions 1.33 and 1.34]{hatcher2005algebraic},
as $p_{*}\left(\pi_{1}\left(Y,v\right)\right)\subseteq q_{*}\left(\pi_{1}\left(\Upsilon,u\right)\right)$,
there is a unique lift $r\colon\left(Y,v\right)\to\left(\Upsilon,u\right)$
of $p$ so that $q\circ r=p$.
\begin{equation}
\xymatrix{ & \left(\Upsilon,u\right)\ar@{->>}[d]^{q}\\
\left(Y,v\right)\ar[r]_{p}\ar@{-->}[ur]^{\exists!r} & \left(\Sigma_{g},o\right)
}
\label{eq:lift}
\end{equation}
Clearly, $r$ respects the $CW$-structure of $Y$ and $\Upsilon$
as well as the labels and directions of edges. It remains to show
that $r$ is injective, for then $Y$ is indeed a subcomplex of $\Upsilon$
and therefore a tiled surface. 

So assume towards contradiction that $r$ is not injective. By properties
\textbf{P1} and \textbf{P3}, there must be two distinct vertices of
$Y$ with the same $r$-image. Without loss of generality we may assume
that one of them is $v$ (otherwise replace $v$ with this vertex
and replace $u$ accordingly: this does not change $q$ nor $r$).
Call the other vertex $v'$. But then, any path in $Y^{\left(1\right)}$
from $v$ to $v'$ is mapped to a closed path at $u$ and therefore
reads a word $w_{1}$ in $\left\{ a_{1}^{\pm1},\ldots,b_{g}^{\pm1}\right\} $
representing some element $j\in J$. As $p_{*}\left(\pi_{1}\left(Y,v\right)\right)=J$,
there is a closed path at $v$ reading $w_{2}$ with $w_{2}$ also
representing the same $j$. Then there exists a path from $v$ to
$v'$ reading $w_{2}^{-1}w_{1}$ which is the identity in $\Gamma_{g}$,
a contradiction to Property \textbf{P2}.
\end{proof}
The combinatorial definition of a tiled surface given in Proposition
\ref{prop:comb-def-of-tiled-surface} is not very satisfactory as
it involves a ``global'' condition -- Property \textbf{P2} --
which is not easy to check, at least not complexity-wise\footnote{Although if $Y$ is a finite complex, it is possible to check condition
\textbf{P2} by the classical solution of Dehn to the word problem
in $\Gamma_{g}$ \cite{Dehn}. See also Theorem \ref{thm:birman-series}.}. In Section \ref{sec:BR and SBR} we present a restricted version
of tiled surfaces -- boundary reduced tiled surfaces -- where \textbf{P2}
can be replaced with a local property. Before that, we show that the
thick version of a tiled surface can also be extracted from the combinatorial,
covering-space-free, definition.
\begin{prop}[An intrinsic definition of the thick version of a tiled surface]
\label{prop:comb-def-of-thick-version} The thick version from Definition
\ref{def:thick-version} of a tiled surface $Y$ can be extracted
from the combinatorial data of $Y$, namely, from the CW-structure
with directions and labels of edges as in Proposition \ref{prop:comb-def-of-tiled-surface}.
This can be done as follows:
\begin{itemize}
\item Around every vertex, give the half-edges incident to the vertex the
cyclic order induced from the cyclic order of half-edges around the
vertex $o$ in $\Sigma_{g}$. Namely, this is the cyclic order which
is (say, clockwise) a cyclic subsequence of $a_{1}$-outgoing, $b_{1}$-incoming,
$a_{1}$-incoming, $b_{1}$-outgoing, $a_{2}$-outgoing, $b_{2}$-incoming,
$a_{2}$-incoming, $b_{2}$-outgoing and so on.
\item The cyclic ordering of half-edges at each vertex makes $Y^{\left(1\right)}$
into a \emph{ribbon graph }that yields an orientable surface with
boundary\emph{}\footnote{A ribbon graph is also known as a fat-graph. The surface is obtained
by thickening every vertex to a disc and every edge to a strip.}\emph{. }
\item Every $4g$-gon of $Y$ corresponds to some boundary component of
the ribbon graph (which reads the relation $\left[a_{1},b_{1}\right]\cdots\left[a_{g},b_{g}\right]$),
and the thick version of $Y$ is then obtained from the ribbon graph
by attaching a $4g$-gon along every boundary component corresponding
to a $2$-cell of $Y$.
\end{itemize}
\end{prop}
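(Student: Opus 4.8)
The plan is to show that a small regular neighborhood of $Y$ inside \emph{any} covering space $p\colon\Upsilon\to\Sigma_{g}$ containing it is homeomorphic to the surface built by the stated recipe; this simultaneously shows the recipe is well-defined and that it recovers the thick version. I would first treat the $1$-skeleton. Since $p$ is a local homeomorphism and $\Sigma_{g}$ is oriented, $\Upsilon$ inherits an orientation for which $p$ is orientation-preserving, and restricting $p$ to a small neighborhood of a vertex $u\in\Upsilon$ identifies it, as an oriented $CW$-pair, with a neighborhood of $o$ in $\Sigma_{g}$. Hence the cyclic order of half-edges around every vertex of $\Upsilon$, and a fortiori of $Y\subseteq\Upsilon$, is the pullback of the cyclic order around $o$, which is exactly the clockwise sequence $a_{1}$-out, $b_{1}$-in, $a_{1}$-in, $b_{1}$-out, $a_{2}$-out, $\ldots$ read off the relator $[a_{1},b_{1}]\cdots[a_{g},b_{g}]$ at the single vertex of $\Sigma_{g}$. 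By the standard theory of regular neighborhoods, a regular neighborhood of the graph $Y^{(1)}$ inside the surface $\Upsilon$ is the ribbon-graph (fat-graph) surface $R_{Y}$ determined by precisely this cyclic-order data, and coherence of these cyclic orders (all ``clockwise'' for the global orientation) makes $R_{Y}$ orientable. Crucially, $R_{Y}$ depends only on the combinatorial data of $Y$, not on $\Upsilon$.

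Next I would analyze how the $4g$-gons attach. Consider the regular neighborhood $R_{0}$ of $\Sigma_{g}^{(1)}$, a wedge of $2g$ circles: since $\Sigma_{g}\setminus\mathrm{int}(R_{0})$ is a single closed disc (the $4g$-cell), $R_{0}$ has exactly one boundary component, and its boundary walk traces the relator word $w=[a_{1},b_{1}]\cdots[a_{g},b_{g}]$ once. Lifting along $p$, the ribbon surface $R_{\Upsilon}$ of $\Upsilon^{(1)}$ likewise satisfies $\Upsilon\setminus\mathrm{int}(R_{\Upsilon})=\bigsqcup(\text{2-cells of }\Upsilon)$, so its boundary components are in bijection with the $2$-cells of $\Upsilon$, each such $2$-cell being a disc glued along one boundary component; in particular the attaching walk of any $4g$-gon of $\Upsilon$ is a single boundary component of $R_{\Upsilon}$. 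Now let $\tau$ be a $2$-cell of $Y$ with attaching walk $\gamma_{\tau}\subseteq Y^{(1)}$. Running the (purely local) boundary-walk rule of $R_{\Upsilon}$ from an edge of $\gamma_{\tau}$ reproduces $\gamma_{\tau}$, and at each vertex the ``next half-edge in the cyclic order'' is again an edge of $\gamma_{\tau}$, hence lies in $Y$; therefore the same rule computed inside $R_{Y}$ (where one merely skips the absent half-edges) yields the identical closed walk. So $\gamma_{\tau}$ is a single boundary component of $R_{Y}$, and by Property \textbf{P3}, together with the fact that in a surface each boundary circle of $R_{\Upsilon}$ borders at most one $2$-cell, distinct $2$-cells of $Y$ yield distinct boundary components of $R_{Y}$.

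Finally I would assemble the conclusion: build a regular neighborhood of $Y$ in $\Upsilon$ cell by cell as $R_{Y}$ together with, for each $2$-cell $\tau$ of $Y$, the closed disc $\overline{\tau}$ glued to $R_{Y}$ along a collar of $\partial\tau$. By the previous paragraph this collar wraps once around a boundary component of $R_{Y}$, and different $2$-cells use different components, so the union is exactly ``$R_{Y}$ with a $4g$-gon attached along every boundary component corresponding to a $2$-cell of $Y$'', which is the construction in the statement. Since regular neighborhoods of a fixed subcomplex of a surface are unique up to homeomorphism, this union is homeomorphic to the thick version $\mathbb{Y}$ of Definition \ref{def:thick-version}, and it manifestly depends only on the combinatorial data of $Y$. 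I expect the main technical point to be the middle step --- checking that each $4g$-gon's boundary walk is traced out as one full boundary component of the ribbon graph of $Y^{(1)}$ and that the correspondence with $2$-cells is injective --- since this is precisely where the specific presentation of $\Gamma_{g}$ (the single relator filling the $4g$-gon) enters, as opposed to the soft facts about coverings and regular neighborhoods used elsewhere.
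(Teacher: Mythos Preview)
Your proposal is correct and follows essentially the same approach as the paper: pull back the cyclic order at $o$ to each vertex of $Y$ to obtain the ribbon graph, then observe that each $4g$-gon of $Y$ fills a single boundary component of that ribbon graph. The paper's own proof is considerably terser---it simply asserts that the ribbon structure and the $4g$-gon attachments are as stated and concludes by uniqueness---so your careful verification of the middle step (that each $2$-cell's attaching walk is a full boundary component of $R_Y$, with distinct $2$-cells giving distinct components via \textbf{P3}) supplies detail the paper leaves implicit.
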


\begin{proof}
If a tiled surface $Y$ is defined via Definition \ref{def:tiled-surface}
as a sub-complex of a covering space $\Upsilon$ of $\Sigma_{g}$,
then every vertex inherits a cyclic ordering of the incident half-edges
from the ordering in $\Upsilon$, which is always the ordering specified
in the statement of the proposition. The thick version of $Y^{\left(1\right)}$
is therefore precisely the ribbon graph described in the statement
of the proposition, and the $4g$-gons are attached as described there
too. Finally, because the description in the statement of the proposition
is well defined and therefore unique, it must indeed recover the thick
version from Definition \ref{def:thick-version}.
\end{proof}

\begin{example}[Universal cover of $\Sigma_{g}$]
\label{exa:universal-cover} Let $\widetilde{\Sigma_{g}}$ denote
the universal cover of $\Sigma_{g}$ endowed with the CW-structure
pulled back from $\Sigma_{g}$ via the covering map. As a topological
space, $\widetilde{\Sigma_{g}}$ is an open disc. There is a natural
action of $\Gamma_{g}$ on $\widetilde{\Sigma_{g}}$ by isomorphisms
of tiled surfaces such that $\Gamma_{g}\backslash\widetilde{\Sigma_{g}}=\Sigma_{g}$.
We fix, once and for all, an arbitrary vertex $u$ in $\widetilde{\Sigma_{g}}$,
to obtain a pointed tiled surface $(\widetilde{\Sigma_{g}},u)$. Note
that $\left(\widetilde{\Sigma_{g}},u\right)$ is the Cayley complex
of $\Gamma_{g}$ and its $1$-skeleton is the Cayley graph of $\Gamma_{g}$
with respect to the generators $a_{1},\ldots,b_{g}$. For every $J\le\Gamma_{g}$,
the covering space of $\Sigma_{g}$ corresponding to $J$ can be also
defined as $J\backslash\widetilde{\Sigma_{g}}$.
\end{example}

\subsubsection*{Morphisms of tiled surfaces}

If $Y_{1}$ and $Y_{2}$ are tiled surfaces, a \emph{morphism} from
$Y_{1}$ to $Y_{2}$ is a map of $CW$-complexes which maps $i$-cells
to $i$-cells for $i=0,1,2$ and respects the directions and labels
of edges. Equivalently, this is a morphism of CW-complexes which commutes
with the restricted covering maps $p_{j}\colon Y_{j}\to\Sigma_{g}$
($j=1,2$). In particular, the restricted covering map from a tiled
surface to $\Sigma_{g}$ is itself a morphism of tiled surfaces. It
is an easy observation that every morphism of tiled surfaces is an
immersion (locally injective). 

\subsection{Boundary cycles, hanging half-edges, blocks, and chains\label{subsec:Boundary-cycles,-hanging}}

In the current Section \ref{subsec:Boundary-cycles,-hanging} we define
some notions related to tiled surfaces which will play an important
role in the coming sections, where we define (strongly) boundary reduced
tiled surfaces and analyze the properties of core surfaces.

Given a tiled surface $Y$, a \textbf{\emph{path}}\textbf{ }in $Y$
is a sequence $\PP=\left(\vec{e_{1}},\ldots,\vec{e}_{k}\right)$ of
directed edges $\vec{e_{1}},\ldots,\vec{e}_{k}$ in $Y^{(1)}$, where
for each $1\leq i\leq k-1$ the terminal vertex of $\vec{e}_{i}$
is the initial vertex of $\vec{e}_{i+1}$. The direction of the edges
in the cycle is not necessarily the same as the direction dictated
in the definition of the tiled surface. A \textbf{\emph{cycle}}\emph{
}in $Y$ is a path where, in addition, the terminal vertex of $\vec{e}_{k}$
is the initial vertex of $\vec{e}_{1}$. We write $\PP^{*}$ or $\mathcal{C}^{*}$
for the oppositely oriented path or cycle, respectively, $(\vec{e}_{k}^{*},\ldots,\vec{e}_{1}^{*})$
where $\vec{e}_{i}^{*}$ is $\vec{e}_{i}$ with the opposite direction.

Every cycle $\CC$ yields a cyclic word $w(\mathcal{C})$ in $\left\{ a_{1}^{\pm1},\ldots,b_{g}^{\pm1}\right\} $
by reading the label $\lab_{i}\in\left\{ a_{1},\ldots,b_{g}\right\} $
of the edge $\vec{e}_{i}$, $1\leq i\leq k$ in order and writing
(from left to right) $\lab_{i}$ if the direction of $\vec{e_{i}}$
is the same as the given direction in the tiled surface $Y$, and
$\lab_{i}^{-1}$ otherwise. Every word $w$ in $\left\{ a_{1}^{\pm1},\ldots,b_{g}^{\pm1}\right\} $
corresponds to an element $\gamma_{w}\in\Gamma_{g}$ via the presentation
(\ref{eq:Gamma_g}). We write $\gamma^{\Gamma_{g}}$ for the conjugacy
class of $\gamma$ in $\Gamma_{g}$. We say that $w$ represents the
conjugacy class $\gamma_{w}^{~\Gamma_{g}}$. 

We denote by $\ell(w)$ the length of a word $w$, and if $\gamma\in\Gamma_{g}$,
we write $\ell(\gamma^{\Gamma_{g}})$ for the minimal length of a
word $w$ for which $\gamma_{w}^{~\Gamma_{g}}=\gamma^{\Gamma_{g}}$.
For $\gamma\in\Gamma_{g}$, we say that $\gamma$ is \textbf{\emph{cyclically
shortest}}\emph{ }if $\gamma=\gamma_{w}$ for some word $w$ with
$\ell(w)=\ell(\gamma^{\Gamma_{g}})$. We say that a word $w$ is \textbf{\emph{cyclically
shortest}}\emph{ }if $\ell(w)=\ell(\gamma_{w}^{~\Gamma_{g}})$.\\

In the rest of the paper, we wish to use some notions of Birman and
Series from \cite{BirmanSeries}. However, we make one small adjustment:
what Birman and Series call a cycle, we will call a block\footnote{This is so that we can reserve the term cycle to be used in the usual
way as we have above.}.

We wish to augment the tiled surface $Y$ by adding some new half-edges.
Here, formally, a half-edge is a copy of the interval $[0,\frac{1}{2})$.
Every edge of $Y$ gives rise to two half-edges which cover the entire
edge except for the midpoint. We may also add a new half-edge to a
vertex, in which case the point $0$ will be identified with the vertex.
For every vertex $v$ of $Y$, we add at most $8$ half-edges to $v$
to form a new surface \label{Def of Y+}$Y_{+}$. The half-edges are
added in such a way that the morphism $p\colon Y\to\Sigma_{g}$ extends
to a map $p_{+}:Y_{+}\to\Sigma_{g}$ that gives a local homeomorphism
of $1$-skeleta at each vertex of $Y_{+}$. We call a half-edge of
$Y_{+}$ a \textbf{\emph{hanging half-edge}} if it is added in this
way: the remaining half-edges in $Y_{+}$ are contained in proper
edges of $Y$.

As a result, there are now exactly $4g$ half-edges incident to every
vertex in $Y_{+}$. The hanging half-edges of $Y_{+}$ inherit both
a label in $\{a_{1},\ldots,b_{g}\}$ and a direction from the corresponding
half-edges in $\Sigma_{g}$. Moreover, at each vertex of $Y_{+}$,
the incident half-edges have a cyclic ordering given by the (clockwise)
cyclic ordering of the half-edges in $\Sigma_{g}$. We fix these labels,
directions, and the cyclic ordering of half-edges at each vertex as
part of the data of $Y_{+}$.

Given two directed edges $\vec{e}_{1}$ and $\vec{e}_{2}$ of $Y$,
with the terminal vertex $v$ of $\vec{e}_{1}$ equal to the source
vertex of $\vec{e}_{2}$, we refer to the $m$ half-edges of $Y_{+}$
incident to $v$ between \emph{$\vec{e}_{1}$} and \emph{$\vec{e}_{2}$}
in the given cyclic order of the $4g$ half-edges at $v$ as \emph{the
}\textbf{\emph{half-edges between}}\emph{ $\vec{e}_{1}$ and $\vec{e}_{2}$}.
Here $0\le m\le4g-1$.

\begin{figure}
\begin{centering}
\includegraphics{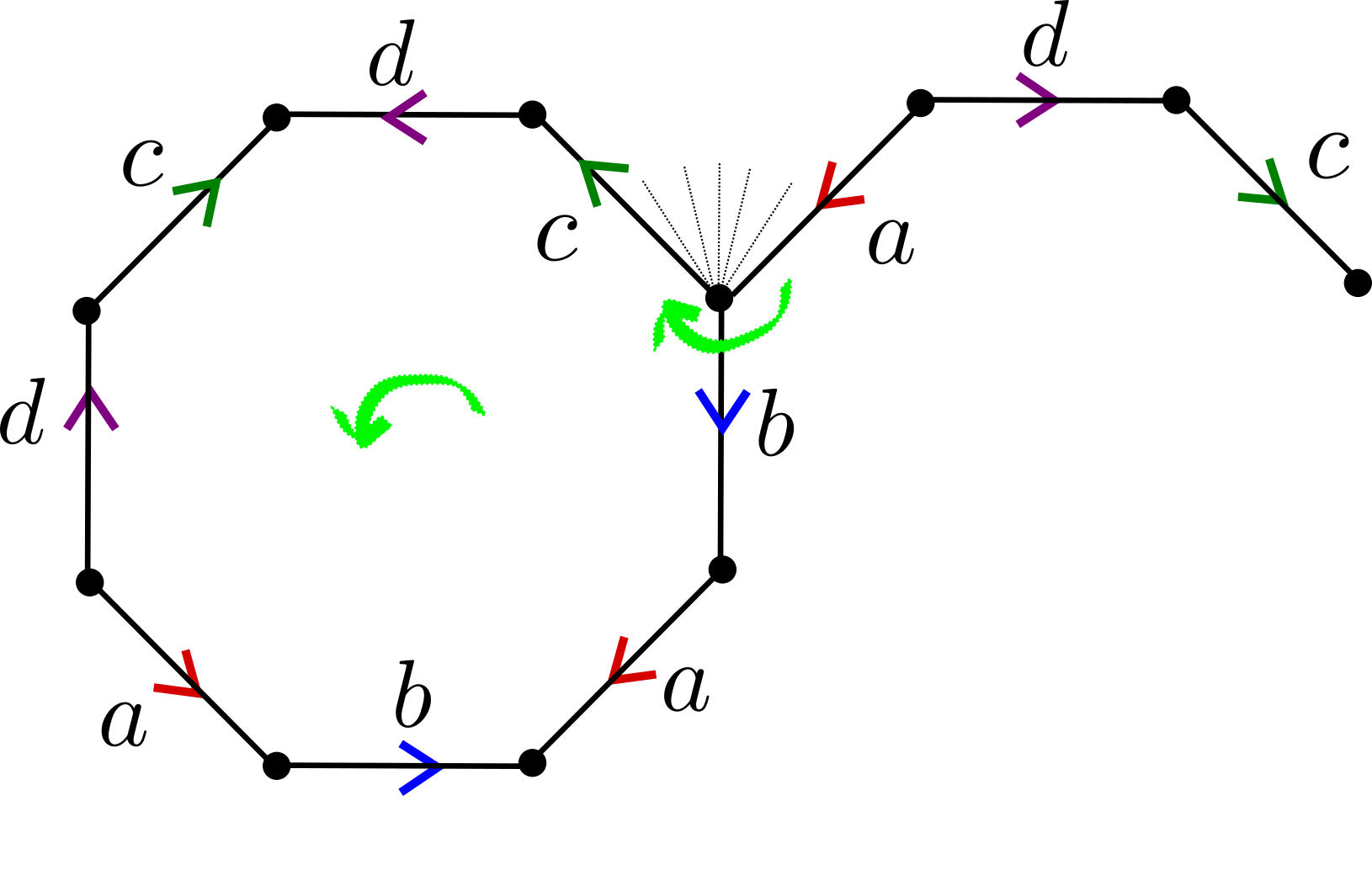}
\par\end{centering}
\caption{\label{fig:blocks and orientation}Fix $g=2$. The figure shows a
piece of a tiled surface containing one octagon and three more edges
outside it. Blocks always follow the boundary of an octagon along
the preset orientation (counter-clockwise in our figures), while half-edges
around a vertex are ordered clockwise. For instance, any path reading
$c^{-1}d^{-1}a$ is a block. In the figure there is a block $c^{-1}d^{-1}a$
on the right and a block $cdc^{-1}d^{-1}a$ on the left (the latter
one can be extended on both ends). These two blocks are consecutive:
there is exactly one half-edge (outgoing $b$) between the last edge
(incoming $a$) of the first block and the first edge (outgoing $c$)
of the second block. Therefore, their concatenation forms a chain
$c^{-1}d^{-1}acdc^{-1}d^{-1}a$.}
\end{figure}

A path in a tiled surface $Y$ is a \textbf{\emph{block}}\emph{ }if
it is non-empty and each pair of successive edges have no half-edges
between them. A block runs along the boundary of a single $4g$-gon
in $Y$ or a single ``phantom-$4g$-gon'' which exists in the covering
space of $\Sigma_{g}$ of which $Y$ is a sub-complex. In other words,
a block is a path that reads a subword of the cyclic word $\left[a_{1},b_{1}\right]\ldots\left[a_{g},b_{g}\right]$.
Note that the inverse of a block of length $\ge2$ is \emph{not }a
block. 

A \textbf{\emph{half-block}} is a block of length $2g$, and a \textbf{\emph{long
block}} is a block of length at least $2g+1$, including the case
of a ``full block'' of length $4g$. If a (non-cyclic) block of
length $b$ sits along the boundary of a $4g$-gon $O$, the \textbf{\emph{complement}}\textbf{
of the block }is the inverse of the block of length $4g-b$ consisting
of the complement set of edges along $O$ (so the block and its complement
share the same starting point and the same terminal point).

We say that two blocks $(\vec{e}_{i},\ldots,\vec{e}_{j})$ and $(\vec{e}_{k},\ldots,\vec{e}_{\ell})$
are \textbf{\emph{consecutive}}\emph{ }if $(\vec{e}_{i},\ldots,\vec{e}_{j},\vec{e}_{k},\ldots,\vec{e}_{\ell})$
is a path and there is \uline{exactly} one half-edge between $\vec{e}_{j}$
and $\vec{e}_{k}$. A \textbf{\emph{chain}} is a (possibly cyclic)
sequence of consecutive blocks. This is illustrated in Figure \ref{fig:blocks and orientation}.
A \textbf{\emph{cyclic chain}}\textbf{ }is a chain whose blocks pave
an entire cycle. A \textbf{\emph{long chain}} is a chain consisting
of consecutive blocks of lengths 
\[
2g,2g-1,2g-1,\ldots,2g-1,2g.
\]
A \textbf{\emph{half-chain}}\emph{}\footnote{This notion ``half-chain'' is ours -- it does not appear in \cite{BirmanSeries}.
While this name does not capture the essence of these objects, we
chose it because half-chains are related to half-blocks in roughly
the same manner as long chains are related to long blocks. This will
be apparent in Section \ref{sec:BR and SBR}.} is a cyclic chain consisting of consecutive blocks of length $2g-1$
each.

The \textbf{complement of a long chain }is the inverse of a chain
with blocks of lengths $2g-1,2g-1,\ldots,2g-1$ which sits along the
other side of the $4g$-gons bordering the long chain. Note that the
complement of a long chain shares the same starting point and terminal
point as the long chain, and is shorter by two edges from the long
chain. See Figure \ref{fig:a complement of a long chain}.

\begin{figure}
\begin{centering}
\includegraphics{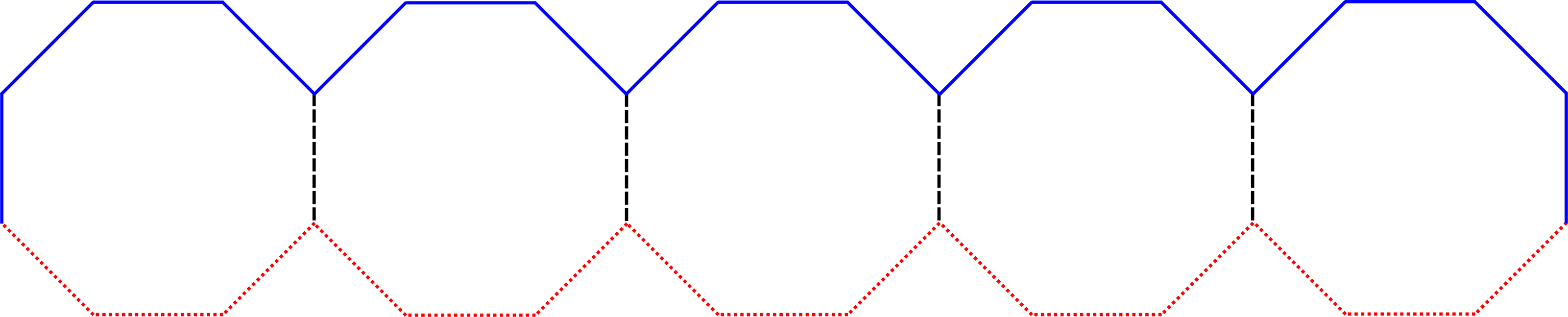}
\par\end{centering}
\caption{\label{fig:a complement of a long chain}Fix $g=2$. The figure shows
a long chain of total length $17$ (blocks of sizes $4,3,3,3,4$,
in blue) and its complement of length $15$ which is the inverse of
a chain made of blocks of lengths $3,3,3,3,3$ (in red).}
\end{figure}

The \textbf{complement of a half-chain} is defined as follows. If
the half-chain sits along the boundary of the $4g$-gons $O_{1},\ldots,O_{r}$,
its complement is the inverse of the half-chain sitting along the
other sides of these $4g$-gons: a block (of length $2g-1$) of the
half-chain along $O_{i}$ is replaced by the path of length $2g-1$
along $O_{i}$, with starting and terminal points one edge away from
the starting and terminal points, respectively, of the block. The
complement of a half-chain has the same length as the original half-chain.
The middle part of Figure \ref{fig:core surfaces - examples} illustrates
two complementing half-chains of length $6$ each (with two octagons
in between).

The following lemma shows, in particular, that there are no cyclic
chains consisting of one block of length $2g$ nor cyclic chains consisting
of blocks of lengths $2g,2g-1,2g-1,\ldots,2g-1$.
\begin{lem}
\label{lem:no cyclic long chains}In every cyclic chain, the number
of blocks of even length is even.
\end{lem}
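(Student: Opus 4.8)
The plan is to attach a $\mathbb{Z}/2\mathbb{Z}$-valued quantity to the edges traversed by a chain and show that going once around a \emph{cyclic} chain forces its total variation to vanish modulo $2$. I would color every edge of the tiled surface according to its label: set $\chi(e)=0$ if $e$ is labeled by one of $a_1,\dots,a_g$ and $\chi(e)=1$ if $e$ is labeled by one of $b_1,\dots,b_g$. The key local fact is that in the cyclic order of the $4g$ half-edges around any vertex of $Y_+$ --- which by Proposition \ref{prop:comb-def-of-thick-version} is the clockwise sequence $a_1^{\mathrm{out}},b_1^{\mathrm{in}},a_1^{\mathrm{in}},b_1^{\mathrm{out}},a_2^{\mathrm{out}},\dots,b_g^{\mathrm{out}}$ --- the ``type'' of a half-edge (whether it belongs to some $a_i$-edge or to some $b_j$-edge) alternates; since $4g$ is even this is a consistent $2$-coloring of the cyclic sequence of half-edges at each vertex. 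Moreover, for any directed edge $\vec e$ incident to a vertex $v$, the half-edge of $\vec e$ at $v$ has type $\chi(e)$, regardless of the direction in which $\vec e$ is traversed, since both ends of an edge sit in half-edge slots of the same type as its label.

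Next I would translate the definitions of ``block'' and ``consecutive'' into statements about $\chi$. Within a block, two successive directed edges have no half-edges between them at their common vertex, so their half-edges there are cyclically adjacent, i.e.\ an odd number ($1$) of steps apart; hence $\chi$ flips across every internal step of a block. Between two consecutive blocks, exactly one half-edge lies between the last edge of the first and the first edge of the second, so their half-edges at the common vertex are an even number ($2$) of steps apart; hence $\chi$ is unchanged at each junction between consecutive blocks. Both cases can be phrased uniformly: $\chi$ flips iff the two half-edges at the common vertex are an odd number of cyclic steps apart, a parity which does not depend on the sense in which one counts around the vertex because $4g$ is even.

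Finally I would count. Let the cyclic chain consist of consecutive blocks $B_1,\dots,B_k$ (indices read cyclically) of lengths $\ell_1,\dots,\ell_k$. Travelling once around the chain, $\chi$ flips exactly $\ell_i-1$ times while crossing $B_i$ and does not flip at any of the $k$ junctions, so the total number of flips equals $\sum_{i=1}^{k}(\ell_i-1)$. Since the chain closes up, $\chi$ returns to its initial value, so $\sum_{i=1}^{k}(\ell_i-1)\equiv 0\pmod 2$; and $\ell_i-1$ is odd exactly when $\ell_i$ is even, whence the number of indices $i$ with $\ell_i$ even is even, which is the assertion.

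The only delicate point is the middle step: reconciling the combinatorial notion ``$m$ half-edges between $\vec e_1$ and $\vec e_2$'' with a parity statement about the $a/b$-coloring, and checking that it is insensitive both to the orientations of the edges and to the direction in which one travels around a vertex; the rest is a one-line $\mathbb{Z}/2$ telescoping. As a consistency check, this predicts exactly the consequences noted after the statement: a lone block of length $2g$, or a chain of block-lengths $2g,2g-1,\dots,2g-1$, has an odd number of even-length blocks and so cannot form a cyclic chain, whereas long chains (lengths $2g,2g-1,\dots,2g-1,2g$) and half-chains (all blocks of length $2g-1$) satisfy the parity constraint.
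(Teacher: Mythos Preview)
Your proof is correct and is essentially the paper's argument in different clothing: the paper tracks the parity of the position of each letter in the relator $[a_1,b_1]\cdots[a_g,b_g]$, observing that at a junction between consecutive blocks this position either stays fixed or jumps by $4$, while within a block it increments by $1$; you track the $a/b$-type $\chi$ instead, but since the relator alternates $a,b,a,b,\ldots$ these two $\mathbb{Z}/2\mathbb{Z}$ invariants coincide. Your verification of the junction step via the alternating half-edge cycle at a vertex is a pleasant geometric rephrasing of the paper's direct inspection of the relator, but the underlying telescoping is identical.
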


(This excludes the special case of a cyclic chain consisting of a
single block of length $4g$, which may be excluded from the definition
of a chain.)
\begin{proof}
In the defining relation $\left[a_{1},b_{1}\right]\ldots,\left[a_{g},b_{g}\right]$,
every letter is at distance $2$ from its inverse. In two consecutive
blocks $(\vec{e}_{i},\ldots,\vec{e}_{j})$ and $(\vec{e}_{k},\ldots,\vec{e}_{\ell})$,
there are thus two possible cases. Either the letter associated with
$\vec{e}_{k}$ is identical to the letter associated with $\vec{e}_{j}$,
as in $\left(a_{1},b_{1},a_{1}^{-1}\right),\left(a_{1}^{-1},b_{1}^{-1},a_{2},b_{2}\right)$
with an incoming half $b_{1}$-edge hanging in between. Or $\vec{e}_{k}$
comes four places after $\vec{e}_{j}$ in the defining relation, as
in $\left(b_{g}^{-1},a_{1}\right),\left(a_{2},b_{2},a_{2}^{-1},b_{2}^{-1}\right)$
with an outgoing half $b_{1}$-edge hanging in between. Hence the
parity of the location in the defining relation of the first letter
in a block alternates after an even-length block. As the defining
relation has even length, this proves the lemma.
\end{proof}
For every directed edge $\vec{e}$ in a tiled surface $Y$ and every
$4g$-gon $O$ in $Y$ that meets $\vec{e}$ at its boundary, we say
that $O$ is on the left (resp.~right) of $\vec{e}$ if for a small
neighborhood $N$ of $\vec{e}$ in $O$, $N$ is on the left (resp.~right)
of $\vec{e}$ as $\vec{e}$ is traversed in its given direction, where
left/right is defined with respect to an orientation inherited from
a fixed orientation of $\Sigma_{g}$. Note that a $4g$-gon $O$ can
be both on the left and right of a directed edge if that edge appears
twice in the boundary of $O$: this is the case, for instance, with
the $a$- and $b$-edges in the right core surface in Figure \ref{fig:core surfaces - examples}.

A \textbf{\emph{boundary cycle}} of $Y$ is a cycle $(\vec{e_{1}},\ldots,\vec{e}_{k})$
in $Y$ corresponding to an oriented boundary component of the thick
version of $Y$ (see Definition \ref{def:thick-version}). We always
choose the orientation so that there are no $4g$-gons to the immediate
\textbf{left }of the boundary component as it is traversed. Therefore,
boundary components of $Y$ correspond to unique cycles. Note that
$\left|\partial Y\right|$ is equal to the sum over boundary cycles
of $Y$ of the number of edges in each such cycle. 

\section{Boundary reduced and strongly boundary reduced tiled surfaces\label{sec:BR and SBR}}

In this section we describe a restricted class of tiled surfaces called
``boundary reduced'' and its subclass of ``strongly boundary reduced''
tiled surfaces. These are tiled surfaces with ``nice'' boundary,
which turns the global property \textbf{P2 }from Proposition \ref{prop:comb-def-of-tiled-surface}
into a simpler one. As we show in the next section, this class also
contains all core surfaces, and as such the properties of its elements
are important for our main object of study. Moreover, these notions
are also important for the analysis in \cite{magee2020asymptotic}:
for a compact, (strongly) boundary reduced tiled surface we are able
to give a rather precise estimate for the expected number of times
it is embedded in a random $N$-sheeted covering of $\Sigma_{g}$
\cite[Propositions 5.26 and 5.26]{magee2020asymptotic}. 
\begin{defn}[Boundary reduced]
\label{def:br}A tiled surface $Y$ is \emph{boundary reduced} if
no boundary cycle of $Y$ contains a long block or a long chain.
\end{defn}

In particular, if $Y$ is boundary reduced, then every path that reads
$\left[a_{1},b_{1}\right]\ldots\left[a_{g},b_{g}\right]$ is not only
closed, but there is also a $4g$-gon attached to it. We also need
a stronger version of this property.
\begin{defn}[Strongly boundary reduced]
\label{def:sbr}A tiled surface $Y$ is \emph{strongly boundary reduced}
if no boundary cycle of $Y$ contains a half-block or a half-chain.
\end{defn}

Because a long block contains (at least two) half-blocks and a long
chain contains (two) half-blocks, a strongly boundary reduced tiled
surface is in particular boundary reduced. 

We now show that in the combinatorial definition of a tiled surface
(Proposition \ref{prop:comb-def-of-tiled-surface}), if the boundary
is reduced, then property \textbf{P2 }holds automatically. Note that
the combinatorial definition of the thick version of a tiled surface,
as in Proposition \ref{prop:comb-def-of-thick-version}, does not
depend on the validity of property \textbf{P2}, namely, the thick
version is well-defined even when the complex $Y$ satisfies the assumptions
of Proposition \ref{prop:comb-def-of-tiled-surface} excluding \textbf{P2}.
\begin{prop}
\label{prop:BR core graph with octagons is a tiled surface} Let $Y$
be an at-most two-dimensional $\mathrm{CW}$-complex with an assignment
of both a direction and a label in $\{a_{1},\ldots,b_{g}\}$ to each
edge, such that properties \textbf{P1} and \textbf{P3 }from Proposition
\ref{prop:comb-def-of-tiled-surface} hold. It the thick version of
$Y$ (as in Proposition \ref{prop:comb-def-of-thick-version}) is
boundary reduced, then $Y$ is a tiled surface.
\end{prop}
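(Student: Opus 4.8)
The plan is to mimic the proof of Proposition \ref{prop:comb-def-of-tiled-surface}, where the only place property \textbf{P2} was invoked was in ruling out that the canonical lift $r\colon(Y,v)\to(\Upsilon,u)$ identifies two distinct vertices. So I would set up exactly the same diagram: take a connected component, pick a basepoint $v$, form $J=p_*(\pi_1(Y,v))\le\Gamma_g$, let $q\colon\Upsilon\to\Sigma_g$ be the covering corresponding to $J$ with marked vertex $u$, and obtain the unique label- and direction-respecting lift $r$ with $q\circ r=p$. The claim, as before, is that $r$ is injective, and this is all that needs proving since injectivity makes $Y$ a subcomplex of $\Upsilon$, hence a tiled surface.

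Next I would argue by contradiction: suppose $r$ identifies two distinct vertices of $Y$; as in the earlier proof, \textbf{P1} and \textbf{P3} let us take one of them to be $v$, call the other $v'$, with $r(v)=r(v')$. Choose a path $\PP$ in $Y^{(1)}$ from $v$ to $v'$; then $r\circ\PP$ is a closed path at $u$ in $\Upsilon^{(1)}$, so the word $w$ it reads lies in $J$. Among all paths from $v$ to $v'$, pick one whose word $w$ is \emph{geodesic} in the following strong sense: $w$ is a shortest word representing $\gamma_w$ in $\Gamma_g$ and cannot be shortened by a ``Dehn reduction'' — i.e., it contains no subword that is more than half of a cyclic conjugate of the relator $[a_1,b_1]\cdots[a_g,b_g]^{\pm1}$. (Here I would cite Dehn's algorithm / the Birman--Series machinery referenced as Theorem \ref{thm:birman-series} to justify that such a geodesic, Dehn-reduced representative exists for the element $\gamma_w$.) The point is that $r\circ\PP$, read as a cyclic word when closed up at $u$, contains no long block: a long block would be a subpath reading more than $2g$ consecutive letters of the relator, which after using one of the two attached $4g$-gons (or, for a long \emph{chain}, a sequence of such blocks with single hanging half-edges between them) could be replaced by its strictly shorter complement, contradicting Dehn-reducedness.

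The key step — and the main obstacle — is to transfer this absence of long blocks/chains in $r\circ\PP$ back up to a contradiction with the hypothesis that $\partial Y$ (equivalently $\partial\Upsilon$ along the image) is boundary reduced. The idea is that if $r$ fails to be injective, then the lifted path $r\circ\PP$ together with the boundary structure of $\Upsilon$ near $r(v)$ must exhibit a long block or long chain: more precisely, since $w$ represents a nontrivial element of $\pi_1$ that dies when we close the path up (because $v$ and $v'$ have the same image), the region of $\Upsilon$ swept out must contain a $4g$-gon configuration whose boundary reading forces, on the $Y$-side, a boundary cycle containing a long block or long chain. Concretely I expect the argument to run: the failure of injectivity produces, inside the thick version $\mathbb{Y}$, an immersed disc or annulus whose boundary is read by $w$ (or $w$ against part of $\partial Y$); a van Kampen diagram / Dehn's lemma argument then shows this boundary word, being a nontrivial relator word, must contain more than half of some relator conjugate, and the corresponding edges on $\partial Y$ constitute a long block (or, when they are interrupted by the minimal hanging half-edges allowed, a long chain) — contradicting boundary reducedness. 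The delicate point is bookkeeping the difference between a long block and a long chain, and checking that the half-edges separating consecutive blocks in the relevant $\partial Y$-segment are exactly one each (so that we indeed get a \emph{chain}, not merely a sequence of blocks); this is where the precise definitions of blocks, chains, and boundary cycles from Section \ref{subsec:Boundary-cycles,-hanging} do the real work, and where I would spend most of the care.
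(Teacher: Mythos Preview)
Your setup is fine through the construction of the lift $r\colon(Y,v)\to(\Upsilon,u)$, but the argument breaks down immediately after. The word $w$ read by a path from $v$ to $v'$ represents an element of $J$, not the trivial element of $\Gamma_g$; nothing ``dies when we close the path up'', and there is no immersed disc or van Kampen diagram bounded by $w$. Choosing $w$ to be a geodesic representative of $\gamma_w\in J$ gives you no leverage at all: a Dehn-reduced word for a nontrivial element of $J$ need not contain any long block, and there is no contradiction to extract. Your ``key step'' paragraph is accordingly not an argument but a hope, and the bookkeeping about chains versus blocks is a red herring (chains play no role here).

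The missing idea is the one already used in the proof of Proposition~\ref{prop:comb-def-of-tiled-surface}: since $p_*(\pi_1(Y,v))=J$, there is a \emph{closed} path at $v$ reading a word $w_2$ with $\gamma_{w_2}=\gamma_w$, and then the concatenation $w_2^{-1}w$ is read by a \emph{non-closed} path in $Y^{(1)}$ and represents the \emph{identity} in $\Gamma_g$. Equivalently (and this is what the paper does), just prove \textbf{P2} directly: take a minimal-length non-closed path $\PP$ in $Y^{(1)}$ reading a word trivial in $\Gamma_g$. Dehn's algorithm (Greendlinger's lemma) forces $w(\PP)$ to contain a subword of the cyclic relator or its inverse of length $\ge 2g+1$, i.e., $\PP$ or $\PP^{*}$ contains a long block. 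If the $4g$-gon along that long block were missing from $Y$, the block would sit on $\partial Y$, contradicting boundary reducedness; hence the $4g$-gon is in $Y$, the complement of the block lies in $Y^{(1)}$, and replacing the block by its complement yields a strictly shorter non-closed path reading the identity --- contradicting minimality. That is the whole proof; no long chains, no van Kampen diagrams, no analysis of $\partial Y$ beyond the single observation that a long block in $Y^{(1)}$ with missing $4g$-gon would appear on $\partial Y$.
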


\begin{proof}
It is enough to prove that $Y$ satisfies \textbf{P2}. Assume toward
contradiction that there is a non-closed path in $Y^{\left(1\right)}$
reading a word which equals the identity in $\Gamma_{g}$. Let $\PP$
be such a path of minimal length. By the classical results of Dehn
\cite{Dehn}, $w\left(\PP\right)$ must then contain a subword which
is a cyclic piece of the relation $\left[a_{1},b_{1}\right]\ldots\left[a_{g},b_{g}\right]$
or its inverse of length more than half (so at least $2g+1$) (this
also follows, for example, from Greendlinger's lemma -- see \cite[Theorem V.4.5]{lyndon1977combinatorial}).
But this corresponds to a long block in $\PP$ or in $\PP^{*}$, and
as $Y$ is boundary reduced, the complement of this long block is
also in $Y$. By replacing the long block with its complement, we
obtain a strictly shorter path reading a word which equals the identity
-- a contradiction.
\end{proof}
\begin{defn}[$\br$- and $\sbr$-closure]
\label{def:BR- and SBR-closure} Let $Y$ be a tiled surface embedded
in a boundary reduced tiled surface $Z$. The \emph{boundary reduced
closure} of $Y$ in $Z$, or $\br$-closure, denoted $\br(Y\hookrightarrow Z)$,
is the intersection of all intermediate tiled surfaces $Y\hookrightarrow Y'\hookrightarrow Z$
which are boundary reduced.

Likewise, if $Z$ is strongly boundary reduced, the \emph{strongly
boundary reduced closure }of $Y$ in $Z$, or $\sbr$-closure, denoted
$\sbr\left(Y\hookrightarrow Z\right)$, is the intersection of all
intermediate strongly boundary reduced tiled surfaces $Y\hookrightarrow Y'\hookrightarrow Z$.
\end{defn}

\begin{prop}
\label{prop:(S)BR-closure is (S)BR}The $\br$-closure of $Y$ is
boundary reduced and contains $Y$. The $\sbr$-closure of $Y$ is
strongly boundary reduced and contains $Y$.
\end{prop}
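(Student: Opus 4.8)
The plan is to show that the family of intermediate (strongly) boundary reduced tiled surfaces $Y \hookrightarrow Y' \hookrightarrow Z$ is closed under intersection, and that it is nonempty; then the $\br$-closure (resp.\ $\sbr$-closure) is a member of this family, so it is boundary reduced (resp.\ strongly boundary reduced) and, being an intersection of subcomplexes each containing $Y$, it contains $Y$. Nonemptiness is immediate: $Z$ itself is an intermediate tiled surface which is boundary reduced (resp.\ strongly boundary reduced) by hypothesis. So the whole content is the claim that an intersection of boundary reduced (resp.\ strongly boundary reduced) subcomplexes of $Z$ containing $Y$ is again boundary reduced (resp.\ strongly boundary reduced).

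First I would clarify what ``intersection'' means here: if $\{Y_\alpha\}$ is the family of boundary reduced intermediate surfaces, then $\bigcap_\alpha Y_\alpha$ is the subcomplex of $Z$ consisting of those cells of $Z$ lying in every $Y_\alpha$; since each $Y_\alpha$ is a subcomplex, this is again a subcomplex of $Z$, hence a tiled surface (being a subcomplex of a covering space of $\Sigma_g$, via the proper covering space underlying $Z$), and it clearly contains $Y$. The key step is then: suppose $W \eqdf \bigcap_\alpha Y_\alpha$ is \emph{not} boundary reduced, i.e.\ some boundary cycle of $W$ contains a long block or a long chain; derive a contradiction. The idea is that a long block (or long chain) $\mathcal{B}$ in $\partial W$ is a path along the boundary of $4g$-gons that are ``phantom'' in $W$ — they are not $2$-cells of $W$. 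I would argue that $\mathcal{B}$ must therefore fail to be in the boundary of some $Y_\alpha$: precisely, because $\mathcal{B}$ is a long block (length $\ge 2g+1$) bordering a phantom $4g$-gon $O$ whose full boundary path exists in $W$, hence in each $Y_\alpha$; for $Y_\alpha$ to be boundary reduced, this $4g$-gon must actually be a $2$-cell of $Y_\alpha$ (otherwise $\partial Y_\alpha$ would contain the long block $\mathcal B$, or at least a long block along $O$, contradicting $Y_\alpha$ boundary reduced — this needs a small argument that a long block in $W$'s boundary remains ``uncovered'' in $Y_\alpha$ unless $O \in Y_\alpha$). But then $O$ is a $2$-cell of every $Y_\alpha$, hence $O \in W$, contradicting that $O$ is phantom in $W$. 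The strongly boundary reduced case is identical with ``long block/long chain'' replaced by ``half-block/half-chain'', noting that the same argument applies: a half-block or half-chain in $\partial W$ bordering $4g$-gons that are phantom in $W$ but that, by strong boundary reducedness of each $Y_\alpha$, must be $2$-cells of each $Y_\alpha$.

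The main obstacle I anticipate is making precise the implication ``a long block (or half-block) in $\partial W$ forces the adjacent $4g$-gon to be a $2$-cell of each $Y_\alpha$.'' The subtlety is that $\partial Y_\alpha$ and $\partial W$ are different cycles — adding cells when passing from $W$ to $Y_\alpha$ can destroy a boundary cycle of $W$ and replace it by other boundary cycles, and a priori the long block $\mathcal B \subseteq \partial W$ need not appear verbatim in $\partial Y_\alpha$. What is true is that the $4g$-gons bordering $\mathcal B$ are phantom in $W$; each is either a $2$-cell of $Y_\alpha$ or still phantom there. If at least one such $4g$-gon $O$ along $\mathcal B$ is still phantom in $Y_\alpha$, I need to extract from this a long block (resp.\ half-block) or long chain (resp.\ half-chain) in $\partial Y_\alpha$, contradicting its being boundary reduced (resp.\ strongly boundary reduced). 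This is essentially a local statement near $O$: the boundary of the thick version of $Y_\alpha$ runs along the side of $O$ not occupied by edges of $Y_\alpha$ that border $2$-cells, and since $\mathcal B$ (or the relevant sub-block along $O$) consists of more than half the boundary of $O$ and lies in $W \subseteq Y_\alpha$, that portion of $\partial O$ must lie in $\partial Y_\alpha$, producing the forbidden configuration. I would structure this as a lemma: if a tiled surface $T$ is boundary reduced and $O$ is a phantom $4g$-gon whose boundary path lies entirely in $T^{(1)}$, then no long block of $\partial O$ lies in $\partial T$ — equivalently the edges of $\partial O$ not bordering $2$-cells of $T$ number at most $2g$; and the corresponding statement with ``$\le 2g$'' replaced by ``$\le 2g-1$'' for strongly boundary reduced $T$. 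Granting such a lemma, the proof above goes through cleanly; I expect the lemma itself to follow from unwinding Definitions \ref{def:tiled-surface}, \ref{def:thick-version}, \ref{def:br}, \ref{def:sbr} and the description of boundary cycles in Section \ref{subsec:Boundary-cycles,-hanging}, together with Lemma \ref{lem:no cyclic long chains} to handle the chain cases.
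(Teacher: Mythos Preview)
Your overall strategy---show that an arbitrary intersection of (strongly) boundary reduced intermediate subcomplexes is again (strongly) boundary reduced---is exactly the paper's approach, and your observation that nonemptiness is witnessed by $Z$ itself matches the paper verbatim.

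Two adjustments are in order. First, the block cases (long block, half-block) are simpler than you anticipate: the subtlety you flag does not arise. If $b$ is a long (or half-) block in $\partial W$, it borders a \emph{single} $4g$-gon $O \notin W$, and hence $O \notin Y_\alpha$ for some $\alpha$. Since consecutive edges of a block have no half-edges between them, the boundary cycle of $Y_\alpha$ on the $O$-side is forced to run along $b$ verbatim; so $b$ sits in $\partial Y_\alpha$ as the same long (half-) block, contradicting (strong) boundary reducedness. There is no need to assume the full boundary of $O$ lies in $W$ (it typically does not), and your proposed lemma is superfluous here.

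Second, your lemma does \emph{not} handle the chain cases, and this is where the actual content lies. A long chain $c$ in $\partial W$ borders a sequence $O_1,\ldots,O_r$ of $4g$-gons, one per block, and an individual block has length only $2g-1$ or $2g$---not long on its own. Your lemma, phrased for a single phantom $4g$-gon, cannot see the chain structure. What the paper does instead (illustrated in its Figure~\ref{fig:an exposed piece of a long chain}): pick some $O_i \notin Y_\alpha$ and examine $\partial Y_\alpha \cap \partial O_i$. Looking at a maximal consecutive run $O_i,\ldots,O_j$ of $4g$-gons along $c$ that are missing from $Y_\alpha$, the boundary cycle of $Y_\alpha$ is forced around this run; the presence of $O_{i-1}, O_{j+1} \in Y_\alpha$ (or the endpoint blocks of length $2g$) extends the exposed blocks by an extra edge at each end, producing either a long block or a long chain in $\partial Y_\alpha$. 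The half-chain case is analogous: a missing $O_i$ yields in $\partial Y_\alpha$ either a half-block, a long chain, or the same half-chain. This maximal-run argument is what you should substitute for your proposed lemma.
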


\begin{proof}
By assumption, $Z$ itself is (strongly) boundary reduced, and so
the intersection is over a non-empty set of tiled surfaces. It trivially
contains $Y$. We claim that the intersection of every family of boundary
reduced tiled sub-surfaces of $Z$ is boundary reduced. Indeed, if
the intersection $X=\cap Y'$ has some long block $b$ at its boundary
$\partial X$, then $b$ is also a long block at the boundary of some
$Y'$, which is impossible. If $c$ is a long chain at $\partial X$
and $O$ some $4g$-gon of $Z\setminus X$ sitting along $c$, then
there is some $Y'$ not containing $O$ but containing $c$. The intersection
$\partial Y'\cap\partial O$ must then either include a long block
or a block which belongs to a long chain of $\partial Y'$ (see Figure
\ref{fig:an exposed piece of a long chain}), which is impossible. 

Similarly, we claim that the intersection $X=\cap Y'$ of every family
of strongly boundary reduced tiled sub-surfaces of $Z$ is strongly
boundary reduced. If $b$ is a half-block at $\partial X$, it must
also belong to $\partial Y'$ for some $Y'$, which is impossible.
If $\partial X$ contains a half-chain $c$, then every $4g$-gon
$O$ of $Z\setminus X$ sitting along $c$ does not belong to some
$Y'$. But then $O$ sits along a half block, a long chain or the
same half-chain along $\partial Y'$, which is impossible. 
\end{proof}
\begin{figure}
\begin{centering}
\includegraphics[scale=0.7]{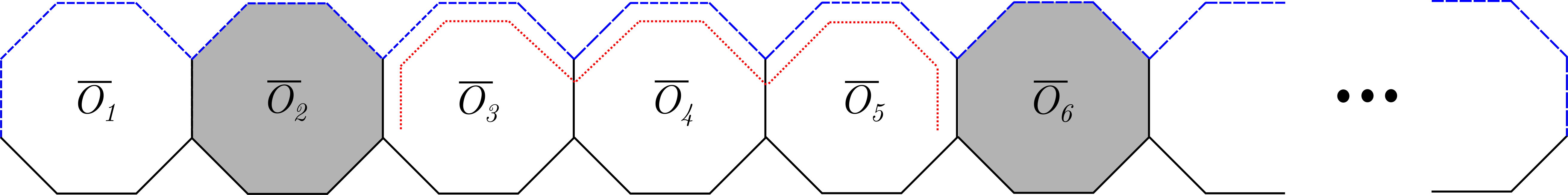}
\par\end{centering}
\caption{\label{fig:an exposed piece of a long chain}Assume $g=2$, and let
$Z$ be a boundary reduced tiled surface containing a long chain $c$
(broken blue line) bordering the octagons $O_{1},\ldots,O_{r}$ (not
necessarily distinct). If $Y$ is sub-tiled surface of $Z$ containing
$c$ but not all the octagons $O_{1},\ldots,O_{r}$, then $Y$ is
not boundary reduced. For example, if $O_{3},O_{4},O_{5}\protect\notin Y$
is a longest subsequence of octagons not in $Y$ (so $O_{2},O_{6}\in Y$),
then there is a long chain (dotted red line) at $\partial Y$ along
these three octagons.}
\end{figure}

A useful property we now prove is that the boundary-reduced closure
$\br(Y\hookrightarrow Z)$ of a compact tiled surface $Y$ is compact
too. The result in Proposition \ref{prop:BR closure of compact is compact}
is not true for $\sbr$-closure. This is illustrated in Figure \ref{fig:SBR may not terminate}.
\begin{prop}
\label{prop:BR closure of compact is compact} Let $Y$ be a compact
tiled surface embedded in a boundary reduced tiled surface $Z$. Then
$\br\left(Y\hookrightarrow Z\right)$ is compact too.
\end{prop}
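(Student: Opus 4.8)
The plan is to build $\br(Y \hookrightarrow Z)$ by an explicit iterative "boundary reduction" procedure and show that the process terminates after finitely many steps, each of which adds only finitely many cells. I would start with $Y_0 = Y$ and, as long as $Y_i$ is not boundary reduced, locate a boundary cycle of $Y_i$ containing a long block or a long chain, and form $Y_{i+1}$ by attaching the complement of that long block (respectively, the $4g$-gons and edges witnessing the complement of that long chain, as in Figure~\ref{fig:a complement of a long chain}). Since $Z$ is boundary reduced, the complement of any long block in $\partial Y_i$ already lies in $Z$ — if a path reading a subword of the relation of length $>2g$ sits in $Y_i \subseteq Z$, then by boundary-reducedness of $Z$ there is a $4g$-gon of $Z$ glued along the full relation through that path, so the complementary block is in $Z$; similarly for long chains, using that each $4g$-gon bordering the chain in $Z$ contributes its complementary block. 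Thus every $Y_i$ is a legitimate tiled sub-surface of $Z$, and any boundary-reduced intermediate $Y \hookrightarrow Y' \hookrightarrow Z$ must contain every $Y_i$ (a straightforward induction: $Y' \supseteq Y_i$ forces $Y'$ to contain the long block/chain witnessed in $\partial Y_i$, hence — since $Y'$ is boundary reduced — its complement, hence $Y_{i+1}$). Consequently, if the process terminates at some $Y_n$, then $Y_n$ is boundary reduced, contains $Y$, and is contained in every boundary-reduced intermediate surface, so $Y_n = \br(Y \hookrightarrow Z)$, and it is compact because it is obtained from the compact $Y$ by finitely many finite additions.

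The heart of the argument is therefore a termination/finiteness statement, and this is the step I expect to be the main obstacle. The natural approach is to find a monovariant: a nonnegative integer quantity attached to $Y_i$ that strictly decreases (or a lexicographic pair) under each reduction step, together with an a priori bound ensuring only finitely many cells are ever added. A promising candidate is the boundary length $|\partial Y_i|$ together with a count tracking $4g$-gons: attaching the complement of a long block of length $b$ replaces $b$ boundary edges by $4g - b < b$ new boundary edges (since $b \geq 2g+1$) while adding one $4g$-gon whose other $4g - b$ edges may be new — but the recursion $|\partial Y|=2e-4gf$ from Definition~\ref{def:thick-version} shows that adding one $4g$-gon along a block of length $b$, together with its $4g-b$ complementary edges, changes $|\partial Y|$ by $2(4g-b) - 4g = 4g - 2b = -(2b-4g) \leq -2$. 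So $|\partial Y_i|$ strictly decreases by at least $2$ at each long-block step. For a long chain of blocks of lengths $2g, 2g-1,\dots,2g-1,2g$ (total length $L$) bordering $r$ octagons, one attaches those $r$ octagons and the complementary chain of total length $L-2$: the same bookkeeping gives a change in $|\partial Y|$ of $2(L-2-L) \cdot(\text{something})$ — more carefully, $2\cdot(\#\text{new edges}) - 4g\cdot r$, and since the complement has $L-2$ edges while the long chain had $L$, the new boundary contribution is shorter by exactly $2$, so again $|\partial Y_i|$ drops by $2$. Hence $|\partial Y_i|$ is a strictly decreasing sequence of nonnegative integers and the process halts.

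I would organize the write-up as: (1) define the one-step reduction and verify it stays inside $Z$ using boundary-reducedness of $Z$ (the argument sketched above, citing Dehn/Greendlinger as in Proposition~\ref{prop:BR core graph with octagons is a tiled surface} if needed to handle long chains cleanly); (2) prove the monotonicity of $|\partial Y_i|$ via the identity $|\partial Y| = 2e - 4gf$, which reduces the geometric claim to arithmetic and is where care is needed to confirm the count of genuinely new edges is at most the complement length; (3) conclude termination and hence compactness; (4) identify the terminal surface with $\br(Y \hookrightarrow Z)$ by the containment/minimality argument. The subtlety to watch is in step (2): when the complementary block or chain is attached, some of its edges might already be present in $Y_i$, which only helps (fewer new edges, so $|\partial Y|$ drops by \emph{at least} $2$); conversely one must make sure the complement never shares an edge in a way that would prevent the $4g$-gon from being glued — but this is exactly guaranteed by $Z$ being a covering space of $\Sigma_g$ and Property~\textbf{P3}. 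The rest is bookkeeping.
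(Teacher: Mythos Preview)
Your proposal is correct and follows essentially the same approach as the paper: build the $\br$-closure from the bottom up by repeatedly attaching the $4g$-gons along long blocks and long chains in $\partial Y_i$, and use the strict decrease of $|\partial Y_i|$ (by at least $2$ at each step) to force termination. The paper's proof is terser --- it cites the proof of Proposition~\ref{prop:(S)BR-closure is (S)BR} to justify that the added $4g$-gons lie in $\br(Y\hookrightarrow Z)$ rather than arguing directly via boundary-reducedness of $Z$ and the minimality induction you sketch --- but the content is the same.
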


\begin{proof}
Our definition of the $\br$-closure is from the top down: by taking
the intersection of boundary reduced tiled surfaces. But when $Y$
is compact, one can instead construct its $\br$-closure in $Z$ from
the bottom up by adding closed $4g$-gons to $Y$. We describe this
process and show that one needs to add only finitely many $4g$-gons,
which will prove the claim.

Indeed, let $Y'=Y$. We add (closed) $4g$-gons to $Y'$ while keeping
it a sub-tiled surface of $\br\left(Y\hookrightarrow Z\right)$. By
(the proof of) Proposition \ref{prop:(S)BR-closure is (S)BR}, as
$Y'\subseteq\br\left(Y\hookrightarrow Z\right)$, if $\partial Y'$
contains a long block, then the $4g$-gon along it must too belong
to $\br\left(Y\hookrightarrow Z\right)$, and we may add this $4g$-gon
to $Y$'. In doing so we removed at least $2g+1$ edges from $\partial Y'$,
and added to $\partial Y'$ at most $2g-1$ new edges, thus reducing
$\left|\partial Y'\right|$ by at least $2$. Likewise, if $\partial Y'$
contains a long chain, then all $4g$-gons along it must belong to
$\br\left(Y\hookrightarrow Z\right)$ and we may add them to $Y'$.
In this step $\left|\partial Y'\right|$ is reduced again by at least
two. But $\left|\partial Y'\right|$ is always non-negative hence
this process must end after finitely many steps. The resulting tiled
surface $Y'$ is boundary reduced and thus equals $\br\left(Y\hookrightarrow Z\right)$.
Because we added a finite number of $4g$-gons to a compact tiled
surface, $\br\left(Y\hookrightarrow Z\right)$ is compact too.
\end{proof}
\begin{figure}
\begin{centering}
\includegraphics[scale=0.9]{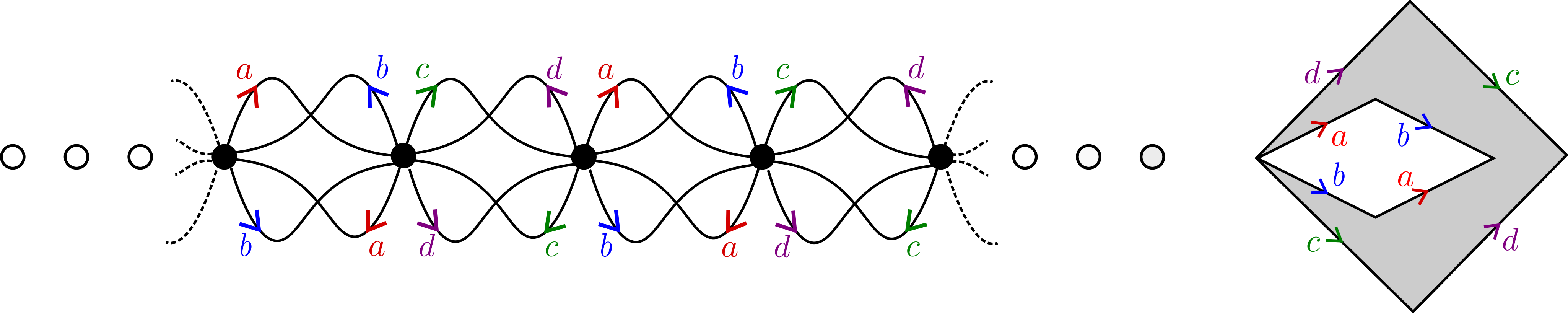}
\par\end{centering}
\caption{\label{fig:SBR may not terminate}Let $g=2$ and $\Gamma_{2}=\left\langle a,b,c,d\,\middle|\,\left[a,b\right]\left[c,d\right]\right\rangle $.
On the left hand side is a piece of an the infinite $1$-skeleton
of a tiled surface $Z$ with no boundary. The full surface extends
to both sides infinitely with the same fixed pattern, and every possible
octagon is included in it. This is the covering space of $\Sigma_{2}$,
as well as the core surface, corresponding to the normal subgroup
$\ll a^{2},ab,ab^{-1},c^{2},cd\gg\trianglelefteq\Gamma_{2}$. On the
right is a tiled surface $Y$ consisting of a single octagon with
two of its vertices identified. This is the core surface $\text{\ensuremath{\protect\core\left(\left\langle \left[a,b\right]\right\rangle \right)}}$.
Because of symmetry, every morphism $Y\to Z$ looks the same. The
image of $Y$ in such a morphism is a tiled surface $Y'$ consisting
of three vertices, eight edges and one octagon. It is not hard to
see that in this case, although $Y'$ is compact, $\protect\sbr\left(Y'\protect\hookrightarrow Z\right)$
is the entire of $Z$ and, in particular, not compact.}
\end{figure}

We need the following lemma when we construct resolutions in \cite[Section 2.3]{magee2020asymptotic}. 
\begin{lem}
\label{lem:g(SBR(Y)) subset SBR(g(Y))}Let $f\colon Z_{1}\to Z_{2}$
be a morphism between two strongly boundary reduced tiled surfaces
and let $Y$ be a sub-surface of $Z_{1}$. Then 
\[
f\left(\sbr\left(Y\hookrightarrow Z_{1}\right)\right)\subseteq\sbr\left(f\left(Y\right)\hookrightarrow Z_{2}\right).
\]
\end{lem}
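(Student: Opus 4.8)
The plan is to exploit the "top-down" (intersection) characterization of the $\sbr$-closure together with the functoriality of morphisms. Write $W \eqdf \sbr\left(f(Y)\hookrightarrow Z_{2}\right)$; by Definition \ref{def:BR- and SBR-closure}, $W$ is the intersection of all strongly boundary reduced tiled subsurfaces $Y' $ of $Z_{2}$ with $f(Y)\hookrightarrow Y'\hookrightarrow Z_{2}$. The key observation is that $f^{-1}(W)$ (more precisely, the smallest subcomplex of $Z_{1}$ containing the preimage, or a suitably chosen component) contains $Y$ and, if it were strongly boundary reduced, would sandwich $\sbr\left(Y\hookrightarrow Z_{1}\right)$. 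So the heart of the matter is: if $W \hookrightarrow Z_{2}$ is strongly boundary reduced, is $f^{-1}(W) \hookrightarrow Z_{1}$ strongly boundary reduced? Equivalently, does the \emph{preimage under a morphism} of an SBR tiled surface remain SBR?

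First I would make precise what "$f^{-1}(W)$" means as a tiled surface: take the subcomplex of $Z_{1}$ consisting of all cells of $Z_{1}$ whose $f$-image lies in $W$. Since $f$ is a morphism of tiled surfaces — hence an immersion and a cellular map respecting labels, directions and the $4g$-gon structure — this is a genuine subcomplex and thus a tiled subsurface of $Z_{1}$; and $Y\subseteq f^{-1}(W)$ because $f(Y)\subseteq W$. Next I would argue that $f$ restricts to a morphism $f^{-1}(W)\to W$ which is a local homeomorphism on $1$-skeleta away from the boundary, using that $f$ is locally injective and that both source and target have the full local $4g$-valent structure of $\tsg$ at vertices of degree $4g$. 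The crucial step is then boundary-local: a boundary cycle of $f^{-1}(W)$ maps, block by block, into $Z_{2}$, and I must show its image cannot "cross into the interior" of $W$ in a way that destroys the boundary structure. Concretely, a half-block or half-chain appearing in $\partial\left(f^{-1}(W)\right)$ would map to a path in $Z_{2}$ of the same combinatorial type (since blocks map to blocks — reading subwords of the relator — and consecutive blocks map to consecutive blocks, because $f$ preserves the cyclic order of half-edges at each vertex by Proposition \ref{prop:comb-def-of-thick-version}); and such a path would have to lie along $\partial W$ rather than be covered by a $4g$-gon of $W$, since any $4g$-gon of $Z_{2}$ incident along it whose preimage met $f^{-1}(W)$ would already be in $f^{-1}(W)$. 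This contradicts $W$ being strongly boundary reduced. Hence $f^{-1}(W)$ is strongly boundary reduced.

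Once that is established, the conclusion follows formally: $f^{-1}(W)$ is a strongly boundary reduced tiled subsurface of $Z_{1}$ containing $Y$, so by the very definition of $\sbr$-closure as an intersection, $\sbr\left(Y\hookrightarrow Z_{1}\right)\subseteq f^{-1}(W)$, and applying $f$ gives $f\left(\sbr\left(Y\hookrightarrow Z_{1}\right)\right)\subseteq W = \sbr\left(f(Y)\hookrightarrow Z_{2}\right)$, as desired.

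**Main obstacle.** I expect the delicate point to be the boundary-local argument that the preimage of an SBR tiled surface is SBR — specifically, controlling how a boundary cycle of $f^{-1}(W)$ interacts with $\partial W$ versus the interior of $W$. One has to rule out the scenario where a half-chain of $\partial\left(f^{-1}(W)\right)$ has image in $Z_{2}$ that is partly boundary of $W$ and partly interior (bordered by $4g$-gons of $W$ whose full preimage does not lie in $f^{-1}(W)$); handling this requires carefully tracking, for each $4g$-gon $O$ of $Z_{2}$ bordering the image half-chain, whether $O$ (and which of its preimages in $Z_{1}$) lies in $W$ and in $f^{-1}(W)$, in the same spirit as the argument in the proof of Proposition \ref{prop:(S)BR-closure is (S)BR} and Figure \ref{fig:an exposed piece of a long chain}. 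The rest is bookkeeping with the definitions.
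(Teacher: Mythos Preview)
Your proposal is correct and follows essentially the same approach as the paper. The paper's proof differs only cosmetically: rather than showing directly that $f^{-1}(W)$ is strongly boundary reduced, it sets $X \eqdf f^{-1}(W)\cap \sbr(Y\hookrightarrow Z_{1})$ and argues by contradiction that $X=\sbr(Y\hookrightarrow Z_{1})$; but the contradiction is obtained by exactly the boundary-local argument (borrowed from the proof of Proposition~\ref{prop:(S)BR-closure is (S)BR}) that you identify as the main obstacle. One point to make explicit in your sketch: when a half-block or half-chain $b$ appears in $\partial\bigl(f^{-1}(W)\bigr)$, you need the $4g$-gon $O$ on its left to actually exist in $Z_{1}$ before you can push it forward and conclude $f(O)\notin W$; this is precisely where the hypothesis that $Z_{1}$ is strongly boundary reduced enters (otherwise $b$ would lie in $\partial Z_{1}$). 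The paper's variant sidesteps this by intersecting with $\sbr(Y\hookrightarrow Z_{1})$, which is strongly boundary reduced by Proposition~\ref{prop:(S)BR-closure is (S)BR}, so the needed $4g$-gon is available there automatically.
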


\begin{proof}
Denote $W=\sbr\left(f\left(Y\right)\hookrightarrow Z_{2}\right)$.
Let $X=f^{-1}\left(W\right)\cap\sbr\left(Y\hookrightarrow Z_{1}\right)$
be the sub-surface of $\sbr\left(Y\hookrightarrow Z_{1}\right)$ consisting
of all cells which are mapped by $f$ to $W$. Assume towards contradiction
that $X$ is a proper sub-surface of $\sbr\left(Y\hookrightarrow Z_{1}\right)$.
This means that $X$ is not strongly boundary reduced, and so $\partial X$
contains a half-block or a half-chain $b$, and the $4g$-gons along
$b$ belong to $\sbr\left(Y\hookrightarrow Z_{1}\right)$. Let $O$
be one such $4g$-gon. But then $f\left(b\right)\subseteq W$ while
$f\left(O\right)\notin W$. As in the proof of Proposition \ref{prop:(S)BR-closure is (S)BR},
$f\left(O\right)$ lies along a half-block, a long chain or a half-chain
in $\partial W$, which is a contradiction.
\end{proof}
To analyze the properties of (strongly) boundary reduced tiled surfaces,
and later on of core surfaces, we need a result of Birman and Series
that strengthens classical results of Dehn \cite{Dehn}. This result
deals with shortest representatives of conjugacy classes in surface
groups. The paper \cite{BirmanSeries} concerns a wide class of presentations
of Fuchsian group, which includes, in particular, the presentations
$\Gamma_{g}=\left\langle a_{1},b_{1},\ldots,a_{g},b_{g}\,\middle|\,\left[a_{1},b_{1}\right]\cdots\left[a_{g},b_{g}\right]\right\rangle $
for every $g\ge2$ as in  (\ref{eq:Gamma_g}). We state here one of
the main results for this case.

Let $J\le\Gamma_{g}$ be a subgroup. Consider the covering space $\Upsilon=J\backslash\widetilde{\Sigma_{g}}$
corresponding to $J$ (here $\widetilde{\Sigma_{g}}$ is the universal
cover from Example \ref{exa:universal-cover}). This $\Upsilon$ is
a tiled surface without boundary that may be compact or not (depending
on whether $J$ has finite index in $\Gamma_{g}$ or not). Conjugacy
classes in $J$ are in one-to-one correspondence with free homotopy
classes of oriented closed curves in $\Upsilon$, and each such class
has representatives contained in $\Upsilon^{\left(1\right)}$. 

In particular, for an arbitrary $1\ne\gamma\in\Gamma_{g}$, consider
the tiled surface $\Upsilon=\left\langle \gamma\right\rangle \backslash\widetilde{\Sigma_{g}}$.
Topologically, this is a two punctured sphere. The conjugacy class
of $\gamma$ in $\Gamma_{g}$ corresponds to the free-homotopy class
of the essential simple closed curve\footnote{We call a closed curve in a surface \emph{essential} if it is not
null-homotopic.} in $\Upsilon$ (with an appropriate orientation). The set of cyclically
reduced cyclic words in $\left\{ a_{1}^{\pm1},\ldots,b_{g}^{\pm1}\right\} $
representing the conjugacy class of $\gamma$ in $\Upsilon$ is identical
to the set of cyclic words coming from non-backtracking cycles in
$\Upsilon^{\left(1\right)}$ representing the same free-homotopy class
of curves. Given a cycle $\CC$ in $\Upsilon^{\left(1\right)}$, a
``half-block switch'' consists of identifying a half-block in $\CC$
or in $\CC^{*}$, and replacing it with the complement half-block
(around the same $4g$-gon). A ``half-chain switch'' can take place
if one of $\CC$ or $\CC^{*}$ is a half-chain, in which case it refers
to replacing this half-chain with its complement. For example, in
the middle part of Figure \ref{fig:core surfaces - examples}, there
is a cycle $\CC$ reading $aba^{-2}b^{-1}c$, which is a half-chain.
Its complement reads $cd^{-1}c^{-1}a^{-1}dc$.
\begin{thm}[Birman-Series]
\label{thm:birman-series} \cite[Thm. 2.12]{BirmanSeries}
\begin{enumerate}
\item The cyclically reduced cyclic word $w$ in $\left\{ a_{1}^{\pm1},\ldots,b_{g}^{\pm1}\right\} $
is a shortest representative of the conjugacy class in $\Gamma_{g}$
it represents if and only if the corresponding bi-infinite periodic
path $\mathcal{C}$ in $\widetilde{\Sigma_{g}}^{\left(1\right)}$
and its inverse $\mathcal{C}^{*}$ do not contain any long block or
long chain.
\item Assume that the cyclic words $w_{1},w_{2}$ are both shortest representatives
of the conjugacy class $\gamma^{\Gamma_{g}}$ for some $1\ne\gamma\in\Gamma_{g}$.
Let $\mathcal{C}_{1}$ and $\mathcal{C}_{2}$ be corresponding cycles
in $\Upsilon=\left\langle \gamma\right\rangle \backslash\widetilde{\Sigma_{g}}$.
Then either $\mathcal{C}_{2}$ can be obtained from $\mathcal{C}_{1}$
by a finite number of half-block switches, or $\mathcal{C}_{2}$ can
be obtained from $\CC_{1}$ by a single half-chain switch. 
\end{enumerate}
\end{thm}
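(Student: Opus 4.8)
The plan is to derive both parts from the classical theory of (annular) van Kampen diagrams over the one-relator presentation $\Gamma_g=\langle a_1,b_1,\ldots,a_g,b_g\mid R\rangle$ with $R=[a_1,b_1]\cdots[a_g,b_g]$. The first step is to record that every piece of $R$ has length $1$: each generator occurs exactly twice in $R$, once with each sign, and in such a spread-out pattern that no length-$2$ subword of a cyclic conjugate of $R^{\pm1}$ recurs; since $|R|=4g\ge 8$ this gives the condition $C'(1/(4g))$, well inside $C'(1/6)$. Hence reduced diagrams obey Dehn's algorithm and the Greendlinger-type curvature bookkeeping \cite{Dehn}, \cite[Theorem V.4.5]{lyndon1977combinatorial}: the positive combinatorial curvature of a reduced diagram is concentrated on its boundary. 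Part (1) concerns an annular diagram one of whose boundary cycles reads $w$; part (2) compares $w_1$ and $w_2$ through an annular diagram whose boundary cycles read $w_1$ and $w_2^{*}$.

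For the easy direction of (1) I would argue the contrapositive. If $\CC$ or $\CC^{*}$ contains a long block $\mathcal B$ of length $b\ge 2g+1$, then $\mathcal B$ and its complement $\mathcal B'$ (of length $4g-b\le 2g-1$) read words that are equal in $\Gamma_g$, since they differ by the relator bounding the $4g$-gon along which $\mathcal B$ runs; substituting $\mathcal B'$ for $\mathcal B$ in the cyclic word $w$ strictly shortens it within its conjugacy class. A long chain is handled identically, running the substitution along all the $4g$-gons it meets at once: a long chain of total length $L$ and the inverse of its complement chain of length $L-2$ read words equal in $\Gamma_g$, so the substitution shortens $w$ by $2$. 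Either way $w$ is not cyclically shortest.

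For the hard direction of (1), suppose $w$ is cyclically reduced but not cyclically shortest, and fix a strictly shorter cyclically reduced representative $w'$ of the same conjugacy class. Form a reduced annular diagram $A$ whose outer boundary cycle reads $w$ and whose inner boundary cycle reads $(w')^{*}$. Because $|w|>|w'|$, the positive curvature forced onto the outer boundary must appear either as a single $4g$-gon exposing an arc of length $>2g$ along that boundary --- a long block in $\CC$ --- or, if no exposed arc exceeds $2g$, as a maximal run of consecutive boundary $4g$-gons whose exposed arcs have the pattern $2g,2g-1,\ldots,2g-1,2g$, which is a long chain in $\CC$ or in $\CC^{*}$. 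Ruling out every other multi-$4g$-gon shortening configuration, and checking that medium exposed arcs must assemble into exactly this pattern, is the genuinely delicate step and the main obstacle of the whole theorem; it uses the parity statement of Lemma \ref{lem:no cyclic long chains} together with a careful local analysis of how two consecutive boundary $4g$-gons can share edges and half-edges along $\partial A$. This is exactly the refinement of Dehn's algorithm carried out in \cite{BirmanSeries}, and I would follow that analysis.

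For (2), let $w_1,w_2$ be cyclically shortest representatives of $\gamma^{\Gamma_g}$ and build a reduced annular diagram $A$ with outer and inner boundary cycles reading $w_1$ and $w_2^{*}$, so $|w_1|=|w_2|$. If $A$ has no $4g$-gons, then $w_1$ and $w_2$ are cyclic rotations of one another and we are done. Otherwise, part (1) applied to each boundary forbids any exposed arc of length $>2g$ and any exposed long chain on either side, and with $|w_1|=|w_2|$ this forces $A$ to be a single annular collar of $4g$-gons, each meeting the outer boundary in exactly $2g$ edges and the inner boundary in exactly $2g$ edges; each such $4g$-gon realizes one half-block switch, and peeling them off one at a time turns $\CC_1$ into $\CC_2$ by finitely many half-block switches. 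The exceptional case is that the collar closes up around the annulus so that no single $4g$-gon can be removed legally; this happens precisely when $\CC_1$ is a half-chain, $A$ is the region between $\CC_1$ and its complement half-chain, and $A$ realizes the single half-chain switch carrying $\CC_1$ to $\CC_2$. The obstacle here is showing that $A$ is one layer of $4g$-gons thick and isolating this degenerate annular case, which again rests on the no-long-chain constraint from part (1) and on Lemma \ref{lem:no cyclic long chains}.
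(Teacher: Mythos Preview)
The paper does not prove this theorem at all: it is stated as a quotation of \cite[Thm.~2.12]{BirmanSeries} and used as a black box, so there is no ``paper's own proof'' to compare against. Your proposal is therefore not in competition with anything in the paper --- it is an attempt to reprove an external result that the authors deliberately import.

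As for the content of your sketch: the easy direction of (1) is fine and is essentially the argument the paper itself uses downstream (e.g.\ in Corollary~\ref{cor:cycles in BR}). For the hard direction of (1) and for (2), your outline via reduced annular diagrams and $C'(1/6)$ curvature is the right framework, but you explicitly defer the crucial step --- classifying the boundary configurations that force a long block or long chain, and showing the annulus in (2) is one cell thick --- back to \cite{BirmanSeries}. That is not a gap so much as an acknowledgment that you are not actually supplying an independent proof; you are giving a plausibility argument for why the Birman--Series result should hold, which is reasonable but circular if the goal were to replace the citation. One small correction: in (2) your dichotomy is slightly understated, since a half-block switch can also produce cycles that are not cyclic rotations of each other even when only one $4g$-gon is involved, and the ``collar closes up'' case need not require $\CC_1$ itself to be a half-chain but only that the sequence of exposed arcs has the $(2g-1,\ldots,2g-1)$ cyclic pattern; the half-chain case is when \emph{all} exposed arcs on one side have length $2g-1$, which is indeed the exceptional case isolated in the statement.
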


For example, the element $\gamma=aba^{-2}b^{-1}c\in\Gamma_{2}$ has
exactly two different cyclic words which are shortest representatives
of its conjugacy class: the cyclic words $aba^{-2}b^{-1}c$ and $cd^{-1}c^{-1}a^{-1}dc$.
These two words correspond to two disjoint cycles in the $1$-skeleton
of $\left\langle \gamma\right\rangle \backslash\widetilde{\Sigma_{2}}$,
and the complement of their union consists of three components: two
with infinitely many $4g$-gons, and one component, an annulus bounded
by both cycles, containing exactly two octagons.
\begin{cor}
\label{cor:no long blocks/chains =00003D=00003D> cyclically shortest}Let
$Y$ be a tiled surface and let $\CC$ be a (non-backtracking) cycle
in $Y^{\left(1\right)}$. If $\CC$ and $\CC^{*}$ do not contain
any long block or long chain, then $\CC$ is a shortest representative
of its free-homotopy class in $Y$. 
\end{cor}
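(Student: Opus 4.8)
The plan is to deduce Corollary \ref{cor:no long blocks/chains =00003D=00003D> cyclically shortest} from the Birman--Series theorem (Theorem \ref{thm:birman-series}, part~(1)) by passing to the universal cover. Let $\CC$ be a non-backtracking cycle in $Y^{(1)}$ whose underlying cyclic word is $w$, and suppose neither $\CC$ nor $\CC^{*}$ contains a long block or a long chain. First I would note that since $\CC$ is non-backtracking, $w$ is a cyclically reduced cyclic word in $\{a_1^{\pm1},\ldots,b_g^{\pm1}\}$. Let $\gamma = \gamma_w \in \Gamma_g$ be the element it represents; we may assume $\gamma \neq 1$ (if $w$ is empty there is nothing to prove, and a non-backtracking cycle of positive length cannot read a word trivial in $\Gamma_g$ by the small-cancellation/Dehn properties — but in fact this case is handled uniformly below). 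The key observation is that the bi-infinite periodic path $\widetilde{\CC}$ obtained by lifting the periodic path $\ldots\CC\CC\CC\ldots$ to $\widetilde{\Sigma_g}^{(1)}$ has the same local structure as $\CC$: a block or chain in $\widetilde{\CC}$ projects (via the restricted covering map $p\colon Y \to \Sigma_g$, or rather its further lift to $\widetilde{\Sigma_g}$) to a block or chain in $\CC$, because blocks and chains are defined purely in terms of the cyclic ordering of half-edges at vertices, which morphisms of tiled surfaces preserve. Hence $\widetilde{\CC}$ and $\widetilde{\CC}^{*}$ contain no long block or long chain.

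The next step is to invoke Theorem \ref{thm:birman-series}(1): since the bi-infinite periodic path $\widetilde{\CC}$ corresponding to the cyclic word $w$ and its inverse contain no long block or long chain, $w$ is a shortest representative of its conjugacy class in $\Gamma_g$, i.e. $\ell(w) = \ell(\gamma_w^{~\Gamma_g})$. Now I would translate this back into a statement about free-homotopy classes in $Y$. The free-homotopy class of the closed curve traced by $\CC$ in $Y$ corresponds to a conjugacy class in $J = p_*(\pi_1(Y,v))$ (for a choice of basepoint $v$), and the cycles in $Y^{(1)}$ representing this free-homotopy class are exactly those whose cyclic words represent $\gamma^{\Gamma_g}$ and whose lifts stay inside the relevant sub-covering — but any competing shorter cycle $\CC'$ in $Y^{(1)}$ in the same free-homotopy class would yield a cyclic word $w'$ with $\gamma_{w'}^{~\Gamma_g} = \gamma^{\Gamma_g}$ and $\ell(w') < \ell(w)$, contradicting that $w$ is already a shortest representative in all of $\Gamma_g$. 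Thus no shorter representative exists even in $Y$, which is precisely the claim.

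The main subtlety — and the step I would be most careful about — is the claim that a block (or chain) in the lift $\widetilde{\CC}$ maps to a block (resp. chain) in $\CC$, and more importantly its converse is not needed, only this direction. The point is that blocks/chains are "local" data: a block is a path with no hanging half-edges between consecutive edges, and this condition is governed entirely by the cyclic ordering of the $4g$ half-edges at each traversed vertex, which is pulled back identically from $\Sigma_g$ in every tiled surface. A morphism of tiled surfaces (here, the composite $Y^{(1)} \to \Sigma_g^{(1)}$ lifted appropriately, or working directly with the lift of $\CC$'s periodic path into $\widetilde{\Sigma_g}$) sends paths to paths respecting these cyclic orderings, so it cannot destroy the "no half-edge between" property: if $\widetilde{\CC}$ had a long block, its projection would be a long block in $\CC$. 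One must also make sure the lift $\widetilde{\CC}$ is genuinely the bi-infinite periodic path and not something that wraps around — but since $\gamma \neq 1$ has infinite order in $\Gamma_g$ (surface groups are torsion-free), the lift of $\ldots\CC\CC\ldots$ to $\widetilde{\Sigma_g}$ is a genuine bi-infinite embedded (in particular non-periodic-collapsing) path, so Theorem \ref{thm:birman-series}(1) applies verbatim. The case $\gamma = 1$ is vacuous since a non-backtracking cycle reading a word trivial in $\Gamma_g$ would, by Dehn's algorithm, have to contain a long block, contrary to hypothesis — so in fact the hypothesis forces $\gamma \neq 1$ unless $\CC$ is empty, and in the empty case the "free-homotopy class" is trivial and the statement holds trivially.
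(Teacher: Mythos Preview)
Your proposal is correct and follows essentially the same approach as the paper: apply Theorem~\ref{thm:birman-series}(1) to conclude that $w(\CC)$ is a shortest representative of its conjugacy class in $\Gamma_g$, and then observe that any cycle in $Y$ freely homotopic to $\CC$ reads a word in the same conjugacy class, hence cannot be shorter. The paper's proof is two sentences long and suppresses the details you spell out (preservation of blocks/chains under lifting, the $\gamma=1$ case); your elaboration of these points is accurate but not strictly necessary, since blocks and chains depend only on the cyclic word itself.
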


\begin{proof}
If $\CC$ and $\CC^{*}$ contain no long block nor long chain, then,
by Theorem \ref{thm:birman-series}, they are shortest representatives
of $w\left(\CC\right)^{\Gamma_{g}}$, the conjugacy class of $w\left(\CC\right)$
in $\Gamma_{g}$. Every other cycle representing the same free-homotopy
class in $Y$, also represents $w\left(\CC\right)^{\Gamma}$, so it
cannot be shorter.
\end{proof}
If $Y$ is boundary reduced, the converse also hold.
\begin{cor}
\label{cor:cycles in BR}Let $Y$ be a boundary reduced tiled surface,
and let $\CC$ be a (non-backtracking) cycle in $Y^{\left(1\right)}$.
Then the following holds.
\begin{enumerate}
\item \label{enu:BR =00003D=00003D> exists shortest representative of every cycle}There
is a shortest representative cycle in $Y^{\left(1\right)}$ for the
free homotopy class of $\CC$. 
\item \label{enu:in BR cyclically shortest =00003D=00003D> no long blocks and chains}If
$\CC$ is a shortest representative of its free-homotopy class in
$Y$ then $\CC$ and $\CC^{*}$ do not contain any long block or long
chain.
\end{enumerate}
\end{cor}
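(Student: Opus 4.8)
The plan is to prove the two parts of Corollary~\ref{cor:cycles in BR} by reducing everything to Birman--Series (Theorem~\ref{thm:birman-series}) together with the defining property of a boundary reduced tiled surface, namely that boundary cycles contain no long blocks or long chains.

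\medskip

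\textbf{Part (2).} This is the easier half, and I would do it first since it is essentially the contrapositive setup for a ``surgery'' argument. Suppose $\CC$ is a shortest representative of its free-homotopy class in $Y$, but that $\CC$ (or $\CC^{*}$) contains a long block or a long chain $\beta$. Since $\beta$ is long (length $\ge 2g+1$), it runs along the boundary of $4g$-gons that must actually be present in $Y$: indeed, a long block runs along a $4g$-gon $O$ of the ambient covering space, and if $O$ were not a $2$-cell of $Y$ then $\beta$ would lie on the boundary $\partial Y$ (since the $4g$-gon on one side is missing), making $Y$ not boundary reduced --- contradiction. The same reasoning, applied to each $4g$-gon bordering a long chain (cf.\ Figure~\ref{fig:an exposed piece of a long chain} and the proof of Proposition~\ref{prop:(S)BR-closure is (S)BR}), shows all these $4g$-gons lie in $Y$. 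But then the \emph{complement} of $\beta$ (the block of length $4g-b \le 2g-1$, or the complement chain, which is strictly shorter by Lemma~\ref{lem:no cyclic long chains}-type bookkeeping) is also a path in $Y^{(1)}$, and replacing $\beta$ by its complement yields a cycle $\CC'$ in $Y^{(1)}$ that is freely homotopic to $\CC$ (the two differ by pushing across the union of those $4g$-gons) but is strictly shorter. This contradicts minimality of $\CC$, proving (2).

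\medskip

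\textbf{Part (1).} Here I would argue that starting from any cycle $\CC_0$ in $Y^{(1)}$ representing the given free-homotopy class, one can reach a shortest representative \emph{without leaving $Y^{(1)}$}. First, by non-backtracking reductions (which only shorten and stay in $Y^{(1)}$), assume $\CC_0$ is non-backtracking. Now iterate the surgery from Part (2): as long as $\CC$ or $\CC^{*}$ contains a long block or long chain, all the relevant $4g$-gons lie in $Y$ (by boundary-reducedness, exactly as above), so we may replace that long sub-block/sub-chain by its strictly shorter complement, obtaining a freely-homotopic non-backtracking cycle in $Y^{(1)}$ of strictly smaller length. The word length is a non-negative integer that strictly decreases at each step, so the process terminates, at a non-backtracking cycle $\CC_*$ in $Y^{(1)}$ with $\CC_*, \CC_*^{*}$ free of long blocks and long chains. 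By Corollary~\ref{cor:no long blocks/chains ==> cyclically shortest}, $\CC_*$ is a shortest representative of its free-homotopy class in $Y$. This proves (1), and in fact shows the shortest representative is reachable by an explicit reduction procedure.

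\medskip

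\textbf{Main obstacle.} The delicate point is the claim that a long block or long chain appearing in an \emph{interior} cycle $\CC$ of $Y$ forces the bordering $4g$-gons to lie in $Y$. For a single long block this is immediate (a missing $4g$-gon would expose the block on $\partial Y$, contradicting Definition~\ref{def:br}), but for a long \emph{chain} one must be careful: the chain borders several $4g$-gons $O_1,\dots,O_r$, and if some proper nonempty subset is missing from $Y$ one needs to extract, from the exposed stretch, either a long block or a piece of a long chain on $\partial Y$ --- this is precisely the bookkeeping carried out in the proof of Proposition~\ref{prop:(S)BR-closure is (S)BR} and illustrated in Figure~\ref{fig:an exposed piece of a long chain}, and I would cite it rather than redo it. A secondary subtlety is confirming that the complement-surgery genuinely produces a \emph{freely homotopic} cycle and does not accidentally reintroduce backtracking in a way that breaks the length bookkeeping; this is handled by noting that the complement of a long chain is shorter by exactly two edges (as stated in the text) and that any resulting backtracking only decreases length further, so the monovariant still works.
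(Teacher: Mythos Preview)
Your proposal is correct and follows essentially the same approach as the paper: both arguments hinge on the observation that boundary-reducedness forces every $4g$-gon bordering a long block or long chain in $Y^{(1)}$ to lie in $Y$ (the paper makes this claim up front, you establish it within Part~(2)), and then both use the complement-surgery to greedily shorten for (1) and to derive a contradiction for (2). The only cosmetic difference is that you treat (2) first and invoke Corollary~\ref{cor:no long blocks/chains =00003D=00003D> cyclically shortest} at the end of (1), whereas the paper appeals directly to Theorem~\ref{thm:birman-series}; these are equivalent.
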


\begin{proof}
As $Y$ is boundary reduced, every long block contained in $Y^{\left(1\right)}$
lies at the boundary of a $4g$-gon contained in $Y$, and therefore
so does the complement of this long block. Similarly, if $\PP$ is
a long chain contained in $Y^{\left(1\right)}$, then every sequence
of $4g$-gons along $\PP$ which are not in $Y$ gives rise to a long
block or a long chain along $\partial Y$, which is impossible. Hence
all $4g$-gon along $\PP$ belong to $Y$, and so the complement of
$\PP$ belongs to $Y$.

For an arbitrary $\CC\subseteq Y^{\left(1\right)}$, we can greedily
shorten it by replacing every long block or long chain in $\CC$ or
in $\CC^{*}$ by their complement, and this leads to a shortest representative
by Theorem \ref{thm:birman-series}. This proves (\ref{enu:BR =00003D=00003D> exists shortest representative of every cycle}).
Now assume that $\CC\subseteq Y^{\left(1\right)}$ is cyclically shortest.
If $\CC$ or $\CC^{*}$ contains a long block or a long chain, then
replacing this long block/chain with its complement, which is still
in $Y$, reduces the length of $\CC$. This proves (\ref{enu:in BR cyclically shortest =00003D=00003D> no long blocks and chains}). 
\end{proof}
\begin{cor}
\label{cor:BR subsurface is pi1-injective}Every morphism from a boundary
reduced tiled surface is $\pi_{1}$-injective.
\end{cor}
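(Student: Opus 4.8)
The plan is to reduce the statement to the special case of the restricted covering map $p_Y\colon Y\to\Sigma_g$, and then to prove that case by a length-reduction induction fueled by Dehn's solution to the word problem together with boundary reducedness. For the reduction: if $f\colon Y\to Z$ is a morphism of tiled surfaces with $Y$ boundary reduced, then since morphisms commute with the restricted covering maps we have $p_Y=p_Z\circ f$, hence $(p_Y)_*=(p_Z)_*\circ f_*$ on fundamental groups; so it is enough to show $(p_Y)_*\colon\pi_1(Y,v)\to\Gamma_g$ is injective, and injectivity of $f_*$ follows at once. A class in $\ker(p_Y)_*$ is represented by a closed path $\PP$ in $Y^{(1)}$ with $w(\PP)=1$ in $\Gamma_g$, and since a based loop is trivial in $\pi_1$ exactly when it is (freely) null-homotopic, I would instead prove the more convenient statement: \emph{every} closed path $\PP$ in $Y^{(1)}$ with $w(\PP)=1$ in $\Gamma_g$ is null-homotopic in $Y$.

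I would prove this by induction on the number of edges of $\PP$. If $\PP$ is constant there is nothing to do, and if $\PP$ backtracks I delete the spur — a homotopy inside $Y^{(1)}$ — to shorten $\PP$ and invoke the inductive hypothesis. So assume $\PP$ is nonempty and non-backtracking; then by property \textbf{P1} of Proposition \ref{prop:comb-def-of-tiled-surface} the word $w(\PP)$ is nonempty and freely reduced, while still representing the identity in $\Gamma_g$. By Dehn's solution to the word problem in $\Gamma_g$ — used in exactly this form in the proof of Proposition \ref{prop:BR core graph with octagons is a tiled surface}, and also available via Greendlinger's lemma \cite[Theorem V.4.5]{lyndon1977combinatorial} — the word $w(\PP)$ contains a subword of length at least $2g+1$ which is a subword of the cyclic word $R^{\pm1}$, where $R=[a_1,b_1]\cdots[a_g,b_g]$. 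This exhibits a subpath $\rho$ of $\PP$ such that $\rho$ (if the subword lies in $R$) or $\rho^{*}$ (if it lies in $R^{-1}$) is a long block in $Y^{(1)}$.

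Now the key point: because $Y$ is boundary reduced, this long block must border a $4g$-gon $O$ contained in $Y$ — otherwise the long block, or its inverse, would occur as a long block along a boundary cycle of $Y$, contradicting Definition \ref{def:br} — and consequently the complement of the long block lies in $Y$ as well. This is exactly the observation already made at the start of the proof of Corollary \ref{cor:cycles in BR}, which I would cite. Hence $\rho$ together with a path $\rho'$ of length at most $2g-1$ having the same endpoints bounds the $4g$-gon $O\subseteq Y$. Replacing $\rho$ by $\rho'$ in $\PP$ produces a closed path $\PP'$ with strictly fewer edges (at least $2g+1$ removed, at most $2g-1$ added), which is freely homotopic to $\PP$ in $Y$ by a homotopy pushed across $O$, and which still satisfies $w(\PP')=1$ in $\Gamma_g$, since $w(\PP')$ differs from $w(\PP)$ only by a conjugate of $R^{\pm1}$. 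By the inductive hypothesis $\PP'$ is null-homotopic in $Y$, and therefore so is $\PP$. Together with the reduction in the first paragraph, this proves the corollary.

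The one step that is not routine bookkeeping — the main obstacle — is the passage in the third paragraph from "$w(\PP)$ contains more than half of $R^{\pm1}$" to "that occurrence borders an honest $4g$-gon of $Y$"; this is precisely where the boundary-reduced hypothesis is indispensable. I would handle it by the same analysis used for Corollary \ref{cor:cycles in BR}, adding, if needed, a sentence making explicit that the side of $\rho$ facing the (hypothetically) absent $4g$-gon is traced out by a boundary cycle of $Y$ as a block. The remaining ingredients — that a non-backtracking path reads a freely reduced word (a consequence of \textbf{P1}), that pushing a subpath across a $2$-cell is a genuine homotopy in $Y$, and the induction itself — are straightforward.
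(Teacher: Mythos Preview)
Your proof is correct, but it takes a genuinely different route from the paper's. The paper argues directly at the level of an arbitrary morphism $f\colon Y\to Z$: given a non-null-homotopic cycle $\CC$ in $Y$, Corollary \ref{cor:cycles in BR} supplies a representative in $Y^{(1)}$ with no long blocks or long chains in either orientation; since morphisms preserve labels and directions, the $f$-image has the same property, and by Corollary \ref{cor:no long blocks/chains =00003D=00003D> cyclically shortest} is therefore cyclically shortest in $Z$, in particular non-null-homotopic. This is three lines once the Birman--Series machinery is in place.

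Your argument instead factors through the map to $\Sigma_g$ and runs Dehn's word algorithm directly inside $Y$, pushing across $4g$-gons. This is more hands-on but buys something: you never invoke Birman--Series or the notion of long chains --- only Dehn's lemma for the \emph{word} problem, which produces long blocks. So your proof is more elementary and in fact shows that Corollary \ref{cor:BR subsurface is pi1-injective} does not need the full strength of the Birman--Series analysis underlying Corollary \ref{cor:cycles in BR}. It is also essentially a null-homotopy upgrade of the argument for Proposition \ref{prop:BR core graph with octagons is a tiled surface} (there one only needed paths reading the identity to be \emph{closed}; you show closed such paths are \emph{contractible}). The paper's approach, by contrast, packages everything uniformly via shortest representatives, which is the viewpoint used throughout Section \ref{sec:core surfaces}.
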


\begin{proof}
Let $f\colon Y\to Z$ be a morphism of tiled surfaces where $Y$ is
boundary reduced, and let $\CC$ be a cycle in $Y$ which is not null-homotopic.
By Corollary \ref{cor:cycles in BR} there is a representative in
$Y^{\left(1\right)}$ for the same free-homotopy class in $Y$, which
contains no long blocks nor long chains. But then the $f$-image of
this representative is also shortest, and in particular non-nullhomotopic,
in $Z$.
\end{proof}

\section{Properties and construction of core surfaces\label{sec:core surfaces}}

\subsection{Properties of core surfaces\label{subsec:Properties of core surfaces}}

Recall Definition \ref{def:core-surface}: the core surface $\core\left(J\right)$
of a subgroup $J\le\Gamma_{g}$ is the sub-complex of $\Upsilon=J\backslash\widetilde{\Sigma_{g}}$
obtained as the union of all shortest representative cycles in $\Upsilon^{\left(1\right)}$
of non-trivial conjugacy classes of $J$, together with the connected
components of the complement which contain finitely many $4g$-gons.
In this section we prove some basic properties of this object. Among
these, we show that a core surface is strongly boundary reduced, that
it is compact whenever $J$ is f.g., and that whenever $H\le J\le\Gamma_{g},$
the natural morphism $H\backslash\widetilde{\Sigma_{g}}\to J\backslash\widetilde{\Sigma_{g}}$,
restricts to a map between the corresponding core surfaces. 

We start with analyzing the special case of the core surface of a
cyclic subgroup.
\begin{lem}
\label{lem:cyclic core surfaces}Let $1\ne\gamma\in\Gamma_{g}$ be
a non-trivial element. Then the core surface $\core\left(\left\langle \gamma\right\rangle \right)$
is connected and compact with its thick version homeomorphic to an
annulus. Furthermore, both boundary cycles of $\core\left(\left\langle \gamma\right\rangle \right)$
are of length $\ell\left(\gamma^{\Gamma_{g}}\right)$.
\end{lem}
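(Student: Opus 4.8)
The plan is to understand the structure of $\Upsilon = \langle \gamma\rangle\backslash\widetilde{\Sigma_g}$ very concretely and then trace through Definition \ref{def:core-surface} using the Birman--Series machinery (Theorem \ref{thm:birman-series}). First I would recall that topologically $\Upsilon$ is a twice-punctured sphere, equivalently an open annulus, carrying its pulled-back CW-structure; $\pi_1(\Upsilon)\cong\langle\gamma\rangle\cong\mathbb{Z}$, so the only nontrivial conjugacy classes in the group $\langle\gamma\rangle$ are the powers $\gamma^n$, $n\neq 0$, and each corresponds to a free-homotopy class of oriented essential closed curves in $\Upsilon$ winding $|n|$ times around the core of the annulus. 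The key combinatorial input is that a shortest representative cycle in $\Upsilon^{(1)}$ of the class of $\gamma^n$ must, by Corollary \ref{cor:no long blocks/chains =00003D=00003D> cyclically shortest} and part (1) of Theorem \ref{thm:birman-series}, avoid long blocks and long chains in both itself and its inverse. For $n=\pm1$ these are exactly the shortest representatives, of length $\ell(\gamma^{\Gamma_g})$, and part (2) of Theorem \ref{thm:birman-series} tells us these form either a single half-block-switch-equivalence class or a pair related by one half-chain switch.

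Next I would argue that taking the union over \emph{all} nontrivial classes of $\langle\gamma\rangle$ adds nothing beyond what the class of $\gamma$ (equivalently $\gamma^{-1}$) contributes. The point is that a shortest representative of $\gamma^n$ for $|n|\ge 2$ winds around the core $|n|$ times; because $\Upsilon$ is an annulus and the cover is locally injective (it is an immersion), such a cycle, after the greedy reduction of Corollary \ref{cor:cycles in BR}-style shortening, traverses the "thin core annulus" of $\Upsilon$ and is therefore supported on the same edges as the shortest representatives of $\gamma$: more precisely, the union of all shortest representatives of $\gamma$ and of $\gamma^{-1}$ already separates $\Upsilon$ into a central compact region (an annulus) together with the two infinite "funnel" ends, and any geodesic-like cycle for a higher power is forced to stay inside the central region's $1$-skeleton. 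So step (i) of Definition \ref{def:core-surface} produces precisely the union $U$ of the shortest representative cycles of $\gamma^{\Gamma_g}$ and of $(\gamma^{-1})^{\Gamma_g}$ inside $\Upsilon^{(1)}$.

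Then I would analyze this union $U$ together with the $4g$-gons added in step (ii). By part (2) of Theorem \ref{thm:birman-series} and the explicit picture in the paragraph following it (the example with $aba^{-2}b^{-1}c$), the complement $\Upsilon\setminus U$ has exactly three connected components when there are two distinct shortest-representative cycles: two "funnel" components each containing infinitely many $4g$-gons, and one central component, an annulus, containing only finitely many $4g$-gons (those swept out by the half-block/half-chain switches); when all shortest representatives are half-block-equivalent the central region may degenerate but is still a compact annular strip between two boundary circles (possibly a single cycle traversed with both orientations when there are no $4g$-gons at all, as in the left example of Figure \ref{fig:core surfaces - examples}). Either way, step (ii) adds exactly the finitely-many-$4g$-gon central component, so $\core(\langle\gamma\rangle)$ is this compact central piece. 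Its thick version is a regular neighborhood in $\Upsilon$ of an annular region, hence an annulus with two boundary components. Connectedness is immediate since $U$ itself is connected (all the shortest cycles are freely homotopic within the annulus $\Upsilon$ and hence, being embedded or nearly so, meet or bound a common strip). Finally, each of the two boundary cycles of this annulus is, by the choice of orientation in the definition of boundary cycle and by Corollary \ref{cor:cycles in BR}(2) (the core surface being boundary reduced — proved elsewhere in this section, which I may cite), a shortest representative cycle for the same free-homotopy class as $\gamma^{\pm1}$, hence of length exactly $\ell(\gamma^{\Gamma_g})$.

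The main obstacle I anticipate is the careful bookkeeping in step two and three: verifying that no higher power $\gamma^n$ contributes new edges, and pinning down the complement of $U$ as exactly three components with the stated finiteness — this requires translating the half-block/half-chain switch structure of Theorem \ref{thm:birman-series}(2) into a statement about which $4g$-gons lie "between" the various shortest cycles, and handling the degenerate case where $U$ is a single cycle with no enclosed $4g$-gons separately (there the "annulus" is a regular neighborhood of a circle, still an annulus, and both boundary cycles have length $\ell(\gamma^{\Gamma_g})$ by direct inspection). The length statement for the boundary cycles also uses that a boundary cycle of a boundary-reduced tiled surface contains no long block or long chain, so Corollary \ref{cor:no long blocks/chains =00003D=00003D> cyclically shortest} applies to it; I would make sure the relevant "core surfaces are (strongly) boundary reduced" statement is available at this point or prove the needed special case directly for the annular $\core(\langle\gamma\rangle)$.
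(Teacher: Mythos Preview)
Your outline captures the topological picture correctly, but there is a genuine gap at precisely the point that carries the content of the lemma: compactness. You assert that the complement $\Upsilon\setminus U$ has a ``central component, an annulus, containing only finitely many $4g$-gons (those swept out by the half-block/half-chain switches),'' but Theorem~\ref{thm:birman-series}(2) only tells you that any two shortest representatives are related by a \emph{finite} sequence of half-block switches (or one half-chain switch); it does not bound the number of distinct shortest representatives, nor the number of $4g$-gons swept out as you move through all of them. A priori the half-block-switch orbit of $\mathcal{C}$ could be infinite, with the ``central strip'' containing infinitely many $4g$-gons marching off toward a funnel. This is exactly what must be ruled out, and you have not done so.

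The paper closes this gap with a concrete counting argument that you are missing entirely. One builds up from a single shortest cycle $\mathcal{C}$ by repeatedly annexing the $4g$-gon along any half-block in the boundary, obtaining $Y_0=\mathcal{C}\subset Y_1\subset Y_2\subset\cdots$. At each step $|\partial Y_i|$ does not increase, while each boundary component still represents $\gamma^{\pm1}$ and so has length at least $|\mathcal{C}|$; hence $|\partial Y_i|=2|\mathcal{C}|$ for all $i$. On the other hand, the number of hanging half-edges along $\partial Y_i$ strictly increases by $8g(g-1)$ at every step, yet is bounded above by $(4g-2)\cdot|\partial Y_i|=(4g-2)\cdot 2|\mathcal{C}|$. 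Hence the process terminates, and this simultaneously gives compactness and the statement that both boundary cycles have length exactly $\ell(\gamma^{\Gamma_g})$. Your plan to deduce the boundary-length claim from ``core surfaces are boundary reduced'' is a forward reference to Proposition~\ref{prop:properties of core surfaces}, whereas the paper obtains it directly from this bookkeeping. You correctly flagged the finiteness as ``the main obstacle''; the point is that it requires an actual argument, and the one above is what you are missing.
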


\begin{proof}
That $\core\left(\left\langle \gamma\right\rangle \right)$ is connected
follows immediately from Theorem \ref{thm:birman-series}. Let $\mathcal{C}$
be some (simple) cycle in the twice-punctured sphere $\Upsilon=\left\langle \gamma\right\rangle \backslash\widetilde{\Sigma_{g}}$
which is a shortest representative of $\gamma^{\Gamma_{g}}$. If $\mathcal{C}$
or $\mathcal{C}^{*}$ is a half-chain, denote by $\CC'$ its complement.
Then $\core\left(\left\langle \gamma\right\rangle \right)$ is precisely
the compact annulus made of $\CC$, $\CC'$ and the narrow annulus
separating them. Indeed, none of $\CC$, $\CC'$ and their inverses
contain a half-block, so by Theorem \ref{thm:birman-series}, $\CC$
and $\CC'$ are the only shortest representatives of $\gamma^{\Gamma_{g}}$.

Now assume neither $\CC$ nor $\CC^{*}$ is a half-chain. By Theorem
\ref{thm:birman-series}, $\core\left(\left\langle \gamma\right\rangle \right)$
may be obtained from the tiled sub-surface $\CC$ of $\Upsilon$ by
repeatedly annexing any $4g$-gon sitting along a half-block at the
boundary. We only need to show this process must end after finitely
many steps.

Let $Y_{0}=\CC,Y_{1},Y_{2},\ldots$ denote the sub-surfaces we construct
in this process, so $Y_{i+1}$ is obtained from $Y_{i}$ by annexing
a $4g$-gon of $\Upsilon$ bordering a half-block in $\partial Y_{i}$.
For every $i$, as $\mathcal{C}$ is a shortest representative, the
boundary component of $Y_{i}$ around each of the two punctures of
$\Upsilon$ is of length at least $\left|\CC\right|$. Clearly, $\left|\partial Y_{i+1}\right|\le\left|\partial Y_{i}\right|$,
so we get that every $Y_{i}$ has exactly two boundary components,
and each of them is of length $\left|\CC\right|$. In particular,
the $4g$-gon annexed to $Y_{i}$ to obtain $Y_{i+1}$ contains at
its boundary a half block ${\cal B}$ which did not belong to $Y_{i}$,
and such that ${\cal B}^{*}$ is a path in $\partial Y_{i+1}$.

Finally, denote by $s$ the number of hanging half-edges at the boundary
of $\left(Y_{0}\right)_{+}$. At every step, the boundary of $Y_{i}$
has constant length $2\left|\CC\right|$, while $s$ increases by
$\left(2g-1\right)\left(4g-2\right)-2=8g\left(g-1\right)$ --- see
Figure \ref{fig:half block switch}. As the number of hanging half-edges
in a compact tiled surface $Y$ without isolated vertices nor leaves
is at most $\left(4g-2\right)\left|\partial Y\right|$, this process
must terminate after finitely many steps.
\end{proof}
\begin{figure}
\begin{centering}
\includegraphics{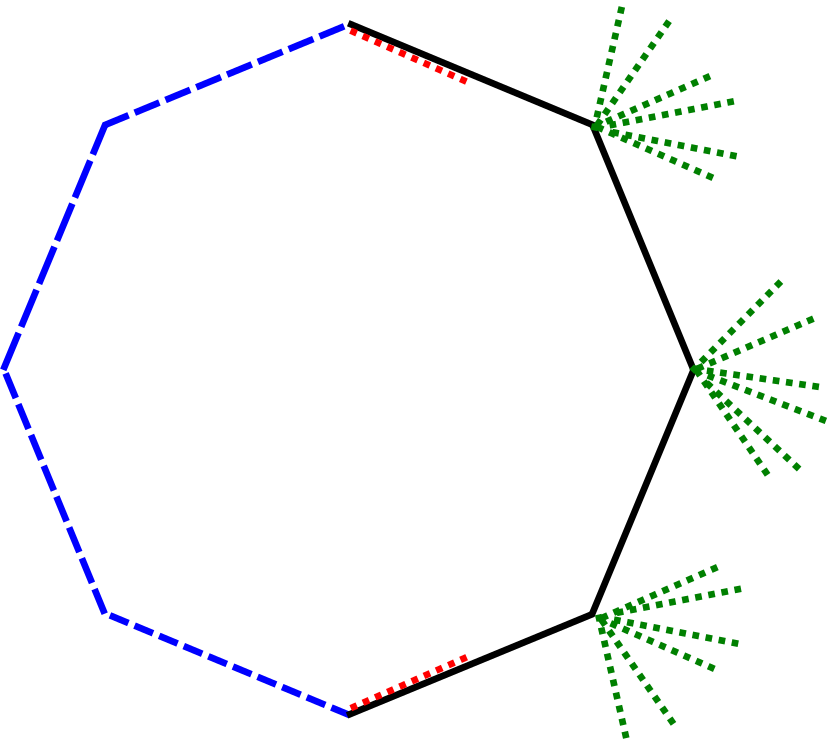}
\par\end{centering}
\caption{\label{fig:half block switch}Let $g=2$. Assume $Y$ is a tiled sub-surface
of a tiled surface $Z$ with no boundary. Assume that $Y$ has some
half-block at its boundary, marked here in broken blue, and sitting
along the octagon $O$. Denote by $Y'$ the union of $Y$ with the
closure of $O$ in $Z$, and assume that the inverse of the other
half block along $O$ is an interval along $\partial Y'$. Then, the
number of hanging half-edges in $\left(Y'\right)_{+}$ is larger by
$16$ than their number in $Y_{+}$: two hanging half-edges of $Y_{+}$
(marked in red) are no longer hanging in $\left(Y'\right)_{+}$, but
$18=\left(2g-1\right)\left(4g-2\right)$ new hanging half-edges (marked
in green) belong to $\left(Y'\right)_{+}$.}
\end{figure}

As $\core\left(J\right)$ is a closed sub-surface of $\Upsilon=J\backslash\tsg$,
every component of its complement $\Upsilon\setminus\core\left(J\right)$
is open. Hence every component is a surface with punctures. Each of
these punctures corresponds either to a puncture of $\Upsilon$, in
which case we call it a \emph{funnel}, or to a component of $\partial\core\left(J\right)$,
in which case we call it a \emph{fake-puncture}. In particular, a
funnel is of infinite distance (measured in paths of adjacent $4g$-gons,
say) from any given $4g$-gon in $\Upsilon\setminus\core\left(J\right)$,
while a fake-puncture has certain (open) $4g$-gons adjacent to it. 
\begin{lem}
\label{lem:If proper surface has g>0 or >3 punctures, it has a shortest element not contained in boundary}Let
$X$ be a connected tiled surface, which is a proper surface (so every
edge is incident with one or two $4g$-gons, and every vertex is incident
with a single sequence of $m$ $4g$-gons, $1\le m\le4g$). Assume
that $X$ has genus $g$ and a total of $b$ boundary components and
punctures. If $g\ge1$ or $b\ge3$, then $X^{\left(1\right)}$ contains
a cycle, not contained in $\partial X$, which is cyclically shortest
among the cycles representing its free homotopy class in $X$.
\end{lem}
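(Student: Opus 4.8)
\medskip

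The plan is to exhibit an explicit non-peripheral cycle in $X^{(1)}$ and then to shorten it within $X$ using Birman--Series theory, checking along the way that shortening never forces us onto the boundary. First I would reduce to the statement that $X^{(1)}$ contains \emph{some} cycle $\CC$ that is not freely homotopic into $\partial X$ (equivalently, represents a conjugacy class of $\pi_1(X)$ not carried by a boundary curve). This is the topological heart of the matter: for a compact orientable surface of genus $g$ with $b$ boundary components, the peripheral subgroups generate a proper subgroup of $\pi_1(X)$ precisely when $\pi_1(X)$ is not abelian-of-rank-$\le 1$ modulo the boundary, and the condition $g\ge 1$ or $b\ge 3$ is exactly the complement of the degenerate cases (disc, annulus, pair of pants, where every element \emph{is} peripheral up to the obvious relations — well, in a pair of pants every conjugacy class is a power of a boundary curve, and in those three cases there is no non-peripheral primitive class). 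Concretely, if $g\ge 1$ one can take $\CC$ to represent one of the standard handle generators $a_1$ of $\pi_1(X)$; if $g=0$ and $b\ge 3$, one takes $\CC$ to represent a curve separating two boundary components from the rest, which is essential and not boundary-parallel. In either case I would realize $\CC$ as an honest non-backtracking cycle in the $1$-skeleton $X^{(1)}$, using that $X$ is a proper surface so its $1$-skeleton carries $\pi_1$ and every free homotopy class has a representative in $X^{(1)}$.

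\medskip

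The second step is to run the greedy shortening procedure from the proof of Corollary~\ref{cor:cycles in BR}. Starting from $\CC$, repeatedly replace any long block or long chain occurring in $\CC$ or $\CC^*$ by its complement. Each such move strictly decreases the length, so the process terminates at a cycle $\CC'$ in $X^{(1)}$ which, by Theorem~\ref{thm:birman-series}(1), is a shortest representative of its conjugacy class in $\Gamma_g$, hence (as in the proof of Corollary~\ref{cor:cycles in BR}(1)) cyclically shortest among cycles representing its free-homotopy class in $X$. The subtlety, and the step I expect to be the main obstacle, is ensuring that these block/chain complement moves keep us \emph{inside} $X$: when we replace a long block by its complement we need the $4g$-gon it borders to lie in $X$, and when we replace a long chain we need the whole run of $4g$-gons along it to lie in $X$. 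For a general tiled surface this can fail, but here $X$ is a \emph{proper} surface — every edge borders one or two $4g$-gons and every vertex has a full cyclic fan of $m$ $4g$-gons, $1\le m\le 4g$. I would argue that a long block along a path in $X^{(1)}$ automatically has its bordering $4g$-gon present: the $2g+1$ (or more) consecutive edges forced by the long block, together with the local fan condition at the vertices, leave no room for the bounding $4g$-gon to be absent. The long-chain case is handled the same way, $4g$-gon by $4g$-gon along the chain, exactly as in the last paragraph of the proof of Corollary~\ref{cor:cycles in BR} where the proper/boundary-reduced hypothesis was used identically. (Alternatively, one observes $X$ being a proper surface is in particular boundary reduced, since a long block or long chain at $\partial X$ would contradict the fan/edge-incidence conditions, and then Corollary~\ref{cor:cycles in BR}(1) applies verbatim.)

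\medskip

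The final step is to check that the cyclically shortest cycle $\CC'$ we have produced is still \emph{not} contained in $\partial X$. The point is that $\CC'$ represents the same conjugacy class of $\pi_1(X)$ as $\CC$ — the complement moves are free homotopies inside $X$ — and that class was chosen non-peripheral in step one. If $\CC'$ were contained in $\partial X$ it would, as a boundary cycle (or a sub-path of one), represent a conjugacy class carried by a boundary component, contradicting non-peripherality. Here I would be slightly careful about the difference between ``lies on $\partial X$ as a set of edges'' and ``is a boundary cycle'': a priori $\CC'$ could traverse boundary edges without being a full boundary cycle. But a cyclically reduced cycle all of whose edges lie on $\partial X$, in a proper surface, must in fact \emph{be} one of the boundary cycles or a proper power of one — because along $\partial X$ the ribbon structure forces a unique way to continue a non-backtracking path staying on the boundary — and in all these cases the class is peripheral. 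Assembling the three steps gives the claim; the only genuinely delicate point, as noted, is the ``moves stay inside $X$'' verification, which is where the hypothesis that $X$ is a proper surface (or the observation that it is therefore boundary reduced) does the work, and where I would mirror the argument already given in Corollary~\ref{cor:cycles in BR}.
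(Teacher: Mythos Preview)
Your overall architecture---find a non-peripheral free-homotopy class, take a shortest representative in $X^{(1)}$, and observe that any cycle contained in $\partial X$ must be a power of a single boundary loop and hence peripheral---is sound, and your step~3 is essentially the argument the paper uses. But there is a genuine gap in step~1 for the case $g=0$, $b=3$. You write that in a pair of pants ``every conjugacy class is a power of a boundary curve'' and propose to take a curve separating two boundary components from the rest. Both assertions are wrong: in a thrice-punctured sphere $\pi_1\cong F_2=\langle x,y\rangle$ with boundary classes $x$, $y$, $(xy)^{\pm1}$, and a simple curve separating two boundary components from the third is isotopic to that third boundary component, hence peripheral. (Your separating curve works only once $b\ge 4$.) The fix---and what the paper does---is to take a \emph{figure-eight} curve around two of the boundary components; this represents, say, the commutator $[x,y]$ or $xy^{-1}$, which is not conjugate to any power of $x$, $y$, or $xy$.

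Your step~2 is an unnecessary detour, and the justification you offer for it is not established. You want to shorten $\CC$ via long-block/long-chain moves and claim these stay inside $X$ because a proper surface is automatically boundary reduced. You do not prove that implication, and the combinatorial argument you sketch (``the local fan condition leaves no room for the bounding $4g$-gon to be absent'') is not obviously correct: a long block on $\partial X$ requires only that several consecutive boundary vertices each have exactly one phantom $4g$-gon on the outside, and nothing in the definition of a proper surface rules this out. The paper sidesteps this entirely. It never shortens explicitly: it simply takes \emph{the} shortest representative of the chosen class (whose existence one may take for granted, or obtain from Corollary~\ref{cor:cycles in BR} in the boundary-reduced situations where the lemma is actually applied) and argues directly that it cannot lie in $\partial X$. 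For $g\ge 1$ the paper uses a cleaner device than non-peripherality: it takes two curves $\alpha,\beta$ away from the boundary with geometric intersection number one, so every representative of $[\alpha]$ must meet $\beta$ and hence cannot sit on $\partial X$. For $b\ge 3$ it uses the figure-eight plus the observation that $\partial X$ is a disjoint union of circles, which is exactly your step~3. So: fix the $b=3$ curve, drop step~2, and you have a correct proof close in spirit to the paper's planar case; the paper's intersection-number trick for $g\ge 1$ is an alternative to your handle-generator argument.
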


\begin{proof}
First assume the genus of $X$ is positive. Then it contains two non-homotopic
non-separating (and thus essential) simple closed curves $\alpha$
and $\beta$ away from its boundary with intersection number one (not
necessarily contained in $X^{\left(1\right)}$). Because every representative
of the free homotopy class $\left[\alpha\right]$ should intersect
$\beta$, we get that the shortest representative of this class is
not contained in $\partial X$.

Now assume that $X$ is a sphere with $b\ge3$ boundary components
and/or punctures. By the assumption that $X$ is a proper surface,
$\partial X$ consists of disjoint connected components, each of which
homeomorphic to $S^{1}$. A figure-eight curve around two of the punctures/boundary
components is not homotopic in $X$ to any power of a loop around
one of the boundary components. This proves the lemma.
\end{proof}
\begin{prop}[Basic properties of core surfaces]
\label{prop:properties of core surfaces} Let $J$ be a non-trivial
subgroup of $\Gamma$ and let $\Upsilon\eqdf J\backslash\tsg$ be
the corresponding covering space of $\Sigma_{g}$. Then the following
properties hold.
\begin{enumerate}
\item \label{enu:every boundary cycle of a core surface is essential}Every
boundary cycle $\delta$ of $\core\left(J\right)$ is an essential
curve of $\Upsilon$.
\item \label{enu:boundary cycles of core surfaces represent shortest words}For
every boundary cycle $\delta$ of $\core\left(J\right)$, $w(\delta)$
is a cyclically shortest word.
\item \label{enu:core surfaces are strongly boundary reduced}$\core\left(J\right)$
is strongly boundary reduced.
\item \label{enu:core(j) obtained by trimming funnels}Every connected component
of the complement of $\core\left(J\right)$ in $\Upsilon$ is homeomorphic
to a twice-punctured sphere, with one funnel and one fake-puncture.
In particular, $\core\left(J\right)$ is a deformation retract of
$\Upsilon=J\backslash\tsg$.
\item \label{enu:core surfaces are connected}$\core\left(J\right)$ is
connected.
\item \label{enu:pi1 of core}The embedding $\core\left(J\right)\hookrightarrow\Upsilon$
induces an isomorphism in the level of fundamental groups.
\item \label{enu:step(ii) contains only open discs or annuli}In step $\left(ii\right)$
of Definition \ref{def:core-surface}, the connected components with
finitely many $4g$-gons that are added to $\core\left(J\right)$
are either open discs or open annuli (twice-punctured spheres).
\end{enumerate}
\end{prop}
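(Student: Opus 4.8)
The plan is to establish $(4)$ first, deduce $(5)$, $(6)$ and $(7)$ from it, and then prove $(3)$, from which $(2)$ and $(1)$ follow quickly. Write $\Omega\subseteq\Upsilon^{(1)}$ for the subgraph built in step $(i)$ of Definition~\ref{def:core-surface} (the union of all shortest‑representative cycles), so that $\core(J)=\Omega\cup\{\text{components of }\Upsilon\setminus\Omega\text{ with finitely many }4g\text{-gons}\}$. Note that $\Omega$ carries no $2$-cells; that every $4g$-gon of $\core(J)$ is one adjoined in step $(ii)$; and that every edge on $\partial\core(J)$ already lies in $\Omega$ (an edge bordering two $4g$-gons of $\core(J)$ is interior). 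Also $\core(J)$ has no leaves, so its boundary cycles are non‑backtracking. The repeated input is that every shortest‑representative cycle of every essential free‑homotopy class of $\Upsilon$ lies in $\Omega\subseteq\core(J)$, hence is disjoint from every complementary component of $\core(J)$.

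For $(4)$, let $K$ be a connected component of $\Upsilon\setminus\core(J)$. Not having been adjoined in step $(ii)$, it carries infinitely many $4g$-gons, so it has at least one funnel; and since $\core(J)\neq\varnothing$ its frontier is nonempty, giving at least one fake‑puncture. Let $\hat K$ be the sub‑tiled‑surface of $\Upsilon$ spanned by $\overline K$; it is connected, proper (after separating it at any vertex where the region touches itself), and $\partial\hat K\subseteq\Omega$. If $\hat K$ had positive genus, or three or more ends and boundary components together, then Lemma~\ref{lem:If proper surface has g>0 or >3 punctures, it has a shortest element not contained in boundary} yields a cycle $\mathcal C_0\subseteq\hat K^{(1)}$ that is cyclically shortest in $\hat K$ and, by that lemma's proof, not freely homotopic into $\partial\hat K$. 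But $[\mathcal C_0]$ is a nontrivial class of $\pi_1(\Upsilon)$, so it has a shortest representative $\mathcal C\subseteq\Omega\subseteq\core(J)$, disjoint from $K$; pushing the homotopy $\mathcal C_0\simeq\mathcal C$ across $\partial\hat K$ by an innermost‑curve argument makes $\mathcal C_0$ boundary‑parallel in $\hat K$, a contradiction. Hence every complementary component is a twice‑punctured sphere with exactly one funnel and one fake‑puncture; equivalently, $\core(J)$ is $\Upsilon$ with a disjoint family of funnels removed, hence a deformation retract of $\Upsilon$. This gives $(4)$, and then $(5)$ (a retract of the connected $\Upsilon$ is connected) and $(6)$ (a deformation retract induces a $\pi_1$-isomorphism). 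For $(7)$: a finitely‑tiled component $F$ of $\Upsilon\setminus\Omega$ spans a compact $\hat F$ with no funnel, so the same use of Lemma~\ref{lem:If proper surface has g>0 or >3 punctures, it has a shortest element not contained in boundary} forces $\hat F$ to have genus $0$ and at most two boundary components, i.e. $F$ is an open disc or an open annulus.

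For $(3)$, let $\delta$ be a boundary cycle of $\core(J)$, oriented so that no $4g$-gon lies to its immediate left, and suppose toward a contradiction that $\delta$ or $\delta^{*}$ contains a half‑block or a half‑chain. Consider first the half‑chain case, say $\delta$ is a half‑chain along $4g$-gons $O_{1},\dots,O_{r}$; since blocks follow the ambient orientation, which places their $4g$-gon on the left, the orientation convention forces each $O_{i}$ to lie \emph{outside} $\core(J)$. A half‑chain contains no long block or long chain, so by Theorem~\ref{thm:birman-series}(1) $\delta$ is a shortest representative of its conjugacy class; hence, by Theorem~\ref{thm:birman-series}(2) and Lemma~\ref{lem:cyclic core surfaces}, $\delta$ and its complement half‑chain $\delta'$ are the only two shortest representatives of that class, and both lie in $\Omega$. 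Each $O_{i}$ has $2g-1$ edges on $\delta$ and $2g-1$ edges on $\delta'$, leaving only $2$ further edges of $\partial O_{i}$; so the component of $\Upsilon\setminus\Omega$ containing $O_{i}$ is contained in $\{O_{1},\dots,O_{r}\}$, is finite, and was adjoined in step $(ii)$ — so $O_{i}\in\core(J)$, contradicting that the $O_{i}$ lie outside $\core(J)$. The half‑block case is handled by the iteration in the proof of Lemma~\ref{lem:cyclic core surfaces}: a half‑block on $\partial\core(J)$ (or on $\delta^{*}$) lets one perform a half‑block switch, annexing the adjacent outside $4g$-gon while keeping the boundary length fixed and strictly increasing the number of hanging half‑edges, so after finitely many steps one reaches a cycle with neither it nor its reverse containing a half‑block, which is therefore a shortest representative of $[\delta]$ and hence lies in $\Omega$; the $4g$-gons annexed along the way end up sandwiched between pieces of $\Omega$ and so lie in finite components of $\Upsilon\setminus\Omega$, hence were already in $\core(J)$ — a contradiction. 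This proves $(3)$. Then neither $\delta$ nor $\delta^{*}$ contains a long block or a long chain (each of those contains a half‑block), so by Theorem~\ref{thm:birman-series}(1) $w(\delta)$ is cyclically shortest — this is $(2)$; and a cyclically shortest word of positive length represents a nontrivial element of $\Gamma_{g}$ while $p\colon\Upsilon\to\Sigma_{g}$ is $\pi_{1}$-injective, so $\delta$ is not null‑homotopic in $\Upsilon$ — this is $(1)$.

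The main obstacle is the half‑block case of $(3)$: one must verify that every $4g$-gon met during the half‑block‑switching process genuinely lies in a finitely‑tiled component of $\Upsilon\setminus\Omega$, so that step $(ii)$ had already absorbed it; this requires carefully tracking which edges of each such $4g$-gon belong to $\Omega$ as the switches are performed. A secondary technical point is making precise the ``properness'' of the complementary surface $\hat K$ in the proof of $(4)$: at a vertex where the complementary region touches itself one first cuts $\hat K$ into proper pieces and applies Lemma~\ref{lem:If proper surface has g>0 or >3 punctures, it has a shortest element not contained in boundary} to each, and one uses incompressibility of $\hat K$ in $\Upsilon$ for the innermost‑curve step.
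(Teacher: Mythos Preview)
Your approach inverts the paper's order: the paper proves $(1)$, then $(2)$, then $(3)$, then $(4)$--$(7)$, whereas you attempt $(4)$ first and $(1)$ last. Two genuine gaps result.

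First, your proof of $(4)$ tacitly uses $(1)$. When you assert that the cycle $\mathcal C_0$ produced by Lemma~\ref{lem:If proper surface has g>0 or >3 punctures, it has a shortest element not contained in boundary} is nontrivial in $\pi_1(\Upsilon)$, and again when you push the homotopy $\mathcal C_0\simeq\mathcal C$ across $\partial\hat K$ by an innermost-curve argument, you are using that $\hat K$ is $\pi_1$-injective (incompressible) in $\Upsilon$. That is exactly the statement that the boundary circles of $\hat K$ --- which are boundary cycles of $\core(J)$ --- are essential in $\Upsilon$, i.e.\ item $(1)$. Since you derive $(1)$ only at the very end from $(2)$, the argument for $(4)$ is circular as written. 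The repair is easy: $(1)$ has a two-line self-contained proof (if $\delta$ bounds a disc, that disc is either outside $\core(J)$, contradicting step $(ii)$, or inside, forcing a component of $\core(J)$ containing no essential curve --- impossible since every component contains a full cycle of $\Omega$), and once inserted before $(4)$ your argument there goes through.

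The half-block case of $(3)$ is the harder gap, and you flag it yourself. Your plan is to iterate half-block switches on $\delta$ until reaching a half-block-free $\delta_n\in\Omega$ and then argue the annexed $4g$-gons are ``sandwiched between pieces of $\Omega$''. But the intermediate $\delta_i$ need not lie in $\Omega$, the annexed region need not be an annulus neatly bounded by $\delta$ and $\delta_n$, and your parenthetical ``(or on $\delta^{*}$)'' is problematic: a half-block in $\delta^{*}$ has its $4g$-gon on the \emph{inside} of $\core(J)$, so there is nothing to annex and the hanging-half-edge termination count from Lemma~\ref{lem:cyclic core surfaces} no longer applies. The paper sidesteps all of this by establishing $(2)$ first, via the key elementary observation that $\delta^{*}$ contains no long block or long chain (any long block $b$ in $\delta^{*}$ has all interior vertices of degree $2$ in $\core(J)^{(1)}$, so no shortest cycle uses an edge of $b$, forcing the $4g$-gon along $b$ into the same complementary region as the $4g$-gons across $b$ --- a contradiction). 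This instantly makes the complementary region $\overline C$ boundary reduced, which both yields $(4)$ directly and lets one shorten $\delta$ \emph{inside} $\overline C$ to finish $(2)$; then $(3)$ is one line, since a single half-block switch on an already shortest $\delta$ lands in $\Omega$ and traps exactly one $4g$-gon. Your half-chain argument for $(3)$, by contrast, is correct and essentially matches the paper's.
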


\begin{proof}
Let $\delta$ be a boundary cycle of $\core\left(J\right)$. If $\delta$
is null-homotopic in $\Upsilon$, then it bounds a disc in one of
its sides. This side cannot be external to $\core\left(J\right)$,
because then it should have been annexed to $\core\left(J\right)$
by part $\left(ii\right)$ of Definition \ref{def:core-surface}.
If the disc is on the internal side of $\delta$, then the connected
component of $\delta$ in $\core\left(J\right)$ does not contain
any essential curve of $\Upsilon$. This is impossible. Hence $\delta$
is essential and (\ref{enu:every boundary cycle of a core surface is essential})
is proved.

For (\ref{enu:boundary cycles of core surfaces represent shortest words}),
by Corollary \ref{cor:no long blocks/chains =00003D=00003D> cyclically shortest},
it is enough to show that $\delta$ and $\delta^{*}$ contain no long
blocks nor long chains. We begin with $\delta^{*}$. Suppose that
$\delta^{*}$ contains a long block $b$, and let $\overline{b}$
denote its complement (along the same $4g$-gon $O_{b}$ of $\Upsilon$).
Consider the $1$-skeleton of $\core\left(J\right)$. All the internal
vertices in $b$ (vertices contained in $b$ but not at its endpoints)
have degree two in $\core\left(J\right)^{\left(1\right)}$, so any
non-backtracking cycle traversing one edge of $b$ must traverse all
of $b$, and can be shortened in $\Upsilon$ by replacing $b$ with
$\overline{b}$. So there is no shortest cyclic representative using
any edge of $b$, and after step $\left(i\right)$ of Definition \ref{def:core-surface},
$O_{b}$ belongs to the same connected component of $\Upsilon\backslash\core\left(J\right)$
as the $4g$-gons on the other side of $b$. But in step $\left(ii\right)$
of Definition \ref{def:core-surface}, $O_{b}$ can only be annexed
to $\core\left(J\right)$ if the entire component is, which is not
the case. This is a contradiction. A similar argument shows that $\delta^{*}$
cannot contain any long chain $c$: indeed, any non-backtracking cycle
in the 1-skeleton of $\core\left(J\right)$ that intersects the interior
of $c$ must contain a long block or a long chain, so no shortest
cyclic representative intersects the interior of $c$. Hence $\delta^{*}$
contains no long block nor long chain.

We still need to show that $\delta$ contains neither long blocks
nor long chains. Denote by $\mathbb{CORE}\left(J\right)$ a realization
of the thick version of $\core\left(J\right)$ in $\Upsilon$. In
particular, replacing $\core\left(J\right)$ with $\mathbb{CORE}\left(J\right)$
does not alter the topology of the complement $\Upsilon\backslash\core\left(J\right)$.
Let $C$ be the connected component of $\Upsilon\backslash\mathbb{CORE}\left(J\right)$
bordering $\delta$, and let $\overline{C}$ denote the closure of
$C$ in $\Upsilon$. So the difference between $C$ and $\overline{C}$
is that every fake-puncture of $C$ becomes a closed connected component
of $\partial\overline{C}$. We think of $\overline{C}$ as a tiled
surface. Formally, every vertex or edge of $\Upsilon$ that belongs
to two (or more) different boundary pieces $\overline{C}$, is duplicated
in $\overline{C}$. Now $\overline{C}$ is a 2-complex with directed
and labeled edges which satisfies properties \textbf{P1} and \textbf{P3}
of Proposition \ref{prop:comb-def-of-tiled-surface}. Because for
every boundary cycle $\delta$ of $\core\left(J\right)$ we have that
$\delta^{*}$ contains neither long blocks nor long chains, we deduce
that $\overline{C}$ is boundary reduced. It now follows from Proposition
\ref{prop:BR core graph with octagons is a tiled surface} that $\overline{C}$
is indeed a legitimate tiled surface, and a boundary reduced one.
By Corollary \ref{cor:cycles in BR}, as $\delta\subseteq\overline{C}$,
we get that $\delta$ can be shortened to a shortest cyclic representative
$\delta'\subseteq\overline{C}$ of its free-homotopy class in $\Upsilon$.

Now suppose that $\delta$ contains a long block or a long chain.
Then $\delta'$ is different from $\delta$, and it is in $\core\left(J\right)$
by definition. So $\delta'\subseteq\overline{C}\cap\core\left(J\right)$.
Because $\delta$ and $\delta'$ are isotopic, different, and lie
in $\partial\overline{C}$, we get that $C$ must be a two-punctured
sphere containing finitely many $4g$-gons, and therefore should have
been part of $\core\left(J\right)$ by part $\left(ii\right)$ of
Definition \ref{def:core-surface}. This is a contradiction, and (\ref{enu:boundary cycles of core surfaces represent shortest words})
is proven.

If the boundary component $\delta$ of $\core\left(J\right)$ contains
a half-block, then a half-block switch yields another shortest representative
and should be in $\core\left(J\right)$ together with the $4g$-gon
along which the half-block lies. A similar argument works if $\delta$
is a half-chain. This proves (\ref{enu:core surfaces are strongly boundary reduced}).

Let $C$ be (again) a connected component of $\Upsilon\setminus\mathbb{CORE}\left(J\right)$.
As $C$ is open, it is a surface and so $\overline{C}$ is a boundary
reduced tiled surface which is a proper surface. Because $\overline{C}$
is boundary reduced, every free homotopy class of curves in $\overline{C}$
has a cyclically shortest representatives in $C$ without long blocks
or long chains (Corollary \ref{cor:cycles in BR}). But such a cycle
also belongs to $\core\left(J\right)$ by the definition of a core
surface, and so is contained in $\partial\overline{C}$. By Lemma
\ref{lem:If proper surface has g>0 or >3 punctures, it has a shortest element not contained in boundary},
$C$ must be a sphere with at most two punctures. By the fact that
$\Upsilon$ is connected and by part $\left(ii\right)$ of Definition
\ref{def:core-surface}, $C$ must have two punctures: one fake and
one a funnel. This settles item (\ref{enu:core(j) obtained by trimming funnels}).
Items (\ref{enu:core surfaces are connected}) and (\ref{enu:pi1 of core})
follow immediately. 

Finally, let $Y$ denote the union of cyclically shortest representatives
in $\Upsilon$ as in the first step of Definition \ref{def:core-surface}.
Let $C$ be a connected component of the complement of the thick version
of $Y$ in $\Upsilon$ with finitely many $4g$-gons. As $C$ is open,
it is a surface, and therefore $\overline{C}$ is a proper surface.
As above, we think of $\overline{C}$ as a $2$-complex (while duplicating
vertices and edges of $\Upsilon$ appearing in different boundary
pieces of $\overline{C}$), its boundary is reduced because $Y$ is
made of shortest cycles only, and therefore $\overline{C}$ is a tiled
surface by Proposition \ref{prop:BR core graph with octagons is a tiled surface}.
By Lemma \ref{lem:If proper surface has g>0 or >3 punctures, it has a shortest element not contained in boundary},
unless $\overline{C}$ is a disc or an annulus, it contains a cyclically
shortest cycle not contained in $\partial\overline{C}$. This is a
contradiction to the definition of $Y$, and item (\ref{enu:step(ii) contains only open discs or annuli})
is proven.
\end{proof}
Consider two subgroups $J_{1},J_{2}\le\Gamma_{g}$. It follows from
a standard fact in the theory of covering spaces that there is a morphism
of tiled surfaces $J_{1}\backslash\tsg\to J_{2}\backslash\tsg$ commuting
with the quotient maps from $\tsg$, if and only if $J_{1}^{~\gamma}\le J_{2}$
for some conjugate $J_{1}^{~\gamma}=\gamma J_{1}\gamma^{-1}$ of $J_{1}$.
In this case, any morphism $J_{1}\backslash\tsg\to J_{2}\backslash\tsg$
restricts to a morphism of the corresponding core surfaces:
\begin{lem}
\label{lem:morphisms of core surfaces}Let $J_{1}\le J_{2}\le\Gamma_{g}$
and let $f\colon J_{1}\backslash\tsg\to J_{2}\backslash\tsg$ be the
natural morphism. Then,
\begin{enumerate}
\item \label{enu:f restrict to a morphism of core surfaces}$f$ restricts
to a morphism $\core\left(J_{1}\right)\to\core\left(J_{2}\right)$,
and
\item \label{enu:shortest cycles come from shortest cycles}for $1\ne\gamma\in J_{1}$,
every shortest representative cycle of $\gamma^{J_{2}}$ in $\core\left(J_{2}\right)$
is an $f$-image of a shortest representative cycle of $\gamma^{J_{1}}$
in $\core\left(J_{1}\right)$.
\end{enumerate}
\end{lem}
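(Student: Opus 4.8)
The plan is to reduce both items to two facts established earlier: Birman--Series (Theorem \ref{thm:birman-series}), which controls shortest representatives, and the intrinsic characterizations of the pieces added in step $(ii)$ of Definition \ref{def:core-surface} (Proposition \ref{prop:properties of core surfaces}, items \ref{enu:core(j) obtained by trimming funnels} and \ref{enu:step(ii) contains only open discs or annuli}). I would first observe that $f$ is an immersion that commutes with the quotient maps to $\tsg$, hence it carries a non-backtracking cycle $\CC$ in $\left(J_1\backslash\tsg\right)^{(1)}$ to a non-backtracking cycle $f(\CC)$ in $\left(J_2\backslash\tsg\right)^{(1)}$ reading the \emph{same} cyclic word; in particular $f$ preserves lengths of cycles and maps blocks to blocks and chains to chains. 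The key point for item \ref{enu:shortest cycles come from shortest cycles} is then: if $1\ne\gamma\in J_1$ and $\CC_2$ is a shortest representative of $\gamma^{J_2}$ contained in $\core\left(J_2\right)$, I want to produce a shortest representative $\CC_1$ of $\gamma^{J_1}$ in $\core\left(J_1\right)$ with $f(\CC_1)=\CC_2$. Start from any cycle $\CC$ in $\left(J_1\backslash\tsg\right)^{(1)}$ representing $\gamma^{J_1}$; since $\gamma^{J_1}\subseteq\gamma^{J_2}$, the cyclic word $w(\CC)$ represents $\gamma^{\Gamma_g}$ too, so by Theorem \ref{thm:birman-series}(1) the length of a shortest representative of $\gamma^{J_1}$ in $J_1\backslash\tsg$ equals $\ell\left(\gamma^{\Gamma_g}\right)=\ell\left(w(\CC_2)\right)$. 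Because $J_1\backslash\tsg$ is boundary-free (hence boundary reduced), Corollary \ref{cor:cycles in BR}\ref{enu:BR =00003D=00003D> exists shortest representative of every cycle} gives a shortest representative $\CC_1$ of $\gamma^{J_1}$ in the $1$-skeleton, and $\CC_1\subseteq\core\left(J_1\right)$ by step $(i)$ of Definition \ref{def:core-surface}. Now $f(\CC_1)$ is a length-$\ell\left(\gamma^{\Gamma_g}\right)$ cycle representing $\gamma^{J_2}$, hence a shortest representative, hence a subcomplex of $\core\left(J_2\right)$; by Theorem \ref{thm:birman-series}(2), $\CC_2$ and $f(\CC_1)$ differ by a finite sequence of half-block switches or a single half-chain switch, each performed around a $4g$-gon that is necessarily in $\core\left(J_2\right)$ (since $\core\left(J_2\right)$ is strongly boundary reduced, Proposition \ref{prop:properties of core surfaces}\ref{enu:core surfaces are strongly boundary reduced}). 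I then lift each such switch: the $4g$-gon $O\subseteq\core\left(J_2\right)$ used in a switch lifts through $f$ to a $4g$-gon $\widetilde O\subseteq\core\left(J_1\right)$ bordering the relevant block of $\CC_1$ (the block is already present since $f$ maps the current cycle in $\core\left(J_1\right)$ to the current cycle in $\core\left(J_2\right)$, and the $4g$-gon is in $\core\left(J_1\right)$ because a half-block at the boundary of a shortest cycle forces annexation of the adjacent $4g$-gon, as in the proof of item \ref{enu:core surfaces are strongly boundary reduced}), and performing the switch upstairs changes $\CC_1$ to a new shortest representative whose $f$-image is the switched cycle downstairs. Iterating, I reach a shortest representative of $\gamma^{J_1}$ in $\core\left(J_1\right)$ mapping onto $\CC_2$, which proves item \ref{enu:shortest cycles come from shortest cycles}.

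For item \ref{enu:f restrict to a morphism of core surfaces}, I must show $f\left(\core\left(J_1\right)\right)\subseteq\core\left(J_2\right)$; since $f$ is cellular it suffices to check this on vertices, edges and $4g$-gons of $\core\left(J_1\right)$. By Definition \ref{def:core-surface}, $\core\left(J_1\right)$ is the union of shortest representative cycles of non-trivial conjugacy classes of $J_1$ together with the components of the complement having finitely many $4g$-gons, and by Proposition \ref{prop:properties of core surfaces}\ref{enu:step(ii) contains only open discs or annuli} those extra components are open discs or open annuli. First handle a cell $\sigma$ lying on a shortest representative cycle $\CC_1$ of some $1\ne\gamma\in J_1$: then $f(\CC_1)$ represents $\gamma^{J_2}$, has the same length as $\CC_1$, and by the length computation above (using Theorem \ref{thm:birman-series}(1) in the boundary-free cover $J_2\backslash\tsg$) it is a shortest representative of $\gamma^{J_2}$, so $f(\sigma)\subseteq f(\CC_1)\subseteq\core\left(J_2\right)$. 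Next handle a cell $\sigma$ in an annex piece $A$ (an open disc or annulus with finitely many $4g$-gons, glued to the union of shortest cycles along part of its boundary): the boundary $\partial A$ consists of arcs lying on shortest representative cycles, which $f$ maps into $\core\left(J_2\right)$ by the previous case; since $f$ is an immersion, $f(A)$ is contained in a neighborhood of $f(\partial A)$ inside $J_2\backslash\tsg$ cut out by finitely many $4g$-gons, so $f(\overline A)$ is a finite subcomplex whose frontier in $J_2\backslash\tsg$ already lies in $\core\left(J_2\right)$; hence any $4g$-gon of $f(A)$ not already in $\core\left(J_2\right)$ would lie in a component of the complement $\left(J_2\backslash\tsg\right)\setminus\core\left(J_2\right)$ that contains only finitely many $4g$-gons, contradicting Proposition \ref{prop:properties of core surfaces}\ref{enu:core(j) obtained by trimming funnels} (every such component is a twice-punctured sphere with a genuine funnel, hence has infinitely many $4g$-gons). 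Therefore $f(\sigma)\subseteq\core\left(J_2\right)$, and the restriction $\core\left(J_1\right)\to\core\left(J_2\right)$ is a well-defined morphism of tiled surfaces.

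The main obstacle I anticipate is the bookkeeping in the lift-the-switch argument for item \ref{enu:shortest cycles come from shortest cycles}: I need to know, at each stage, that the $4g$-gon realizing a half-block or half-chain switch downstairs is the $f$-image of a $4g$-gon that is \emph{already} in $\core\left(J_1\right)$. The clean way to see this is that $f$ restricted to a small neighborhood of a vertex is a local homeomorphism (it is an immersion between surfaces with the same local CW-structure pulled back from $\Sigma_g$), so a $4g$-gon of $J_2\backslash\tsg$ bordering a block that is an $f$-image of a block in $\core\left(J_1\right)$ lifts uniquely along that block to a $4g$-gon of $J_1\backslash\tsg$; and this lifted $4g$-gon lies in $\core\left(J_1\right)$ by the same strong-boundary-reducedness reasoning used in the proof of Proposition \ref{prop:properties of core surfaces}\ref{enu:core surfaces are strongly boundary reduced}, since the current shortest cycle in $\core\left(J_1\right)$ has a half-block (resp.\ half-chain) at the boundary precisely when its $f$-image does. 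The remaining details --- that switches performed upstairs yield again a \emph{shortest} cycle (by Theorem \ref{thm:birman-series} and Corollary \ref{cor:no long blocks/chains =00003D=00003D> cyclically shortest}), and that the process terminates because it mirrors a terminating process downstairs --- are routine.
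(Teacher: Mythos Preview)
Your argument is correct. For item \ref{enu:f restrict to a morphism of core surfaces} your approach matches the paper's: shortest cycles map to shortest cycles because $f$ preserves the cyclic word read, and the finitely-many-$4g$-gon pieces $A$ added in step $(ii)$ cannot escape $\core(J_2)$ because every component of $\Upsilon_2\setminus\core(J_2)$ contains a funnel and hence infinitely many $4g$-gons (Proposition \ref{prop:properties of core surfaces}\ref{enu:core(j) obtained by trimming funnels}). The paper phrases this last step by lifting paths of adjacent $4g$-gons from $\Upsilon_2$ back to $A$, while you phrase it topologically via the frontier of $f(\overline{A})$; these are equivalent, and your use of item \ref{enu:step(ii) contains only open discs or annuli} is not actually needed (only finiteness of $A$ is).

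For item \ref{enu:shortest cycles come from shortest cycles} you take a more laborious route than the paper. The paper's proof is essentially one line: once $\CC_1\subseteq\core(J_1)$ is a shortest representative of $\gamma^{J_1}$, the free homotopy in $\Upsilon_2$ between $f(\CC_1)$ and $\CC_2$ lifts through the covering map $f$ to a free homotopy in $\Upsilon_1$ ending at some cycle $\CC_1'$ of the same length with $f(\CC_1')=\CC_2$; this $\CC_1'$ is then automatically shortest and hence in $\core(J_1)$ by step $(i)$ of Definition \ref{def:core-surface}. Your approach instead decomposes this homotopy into the half-block/half-chain switches of Theorem \ref{thm:birman-series}(2) and lifts each switch by hand. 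This works, but two remarks: first, Theorem \ref{thm:birman-series}(2) is stated for cycles in $\langle\gamma\rangle\backslash\tsg$, so applying it in $\Upsilon_2$ already uses an implicit lift through the cyclic cover; second, your invocation of strong boundary reducedness of $\core(J_1)$ to place the lifted $4g$-gons inside $\core(J_1)$ is unnecessary, since each intermediate cycle after a switch is itself a shortest representative of $\gamma^{J_1}$ and therefore lies in $\core(J_1)$ by step $(i)$ directly. The paper's homotopy-lifting argument delivers all of this at once.
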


\begin{proof}
For $i=1,2$, denote $\Upsilon_{i}=J_{i}\backslash\tsg$. By definition,
the morphism $f$ preserves the orientation and labels of edges. So
it follows from Corollaries \ref{cor:no long blocks/chains =00003D=00003D> cyclically shortest}
and \ref{cor:cycles in BR} that it maps shortest representative cycles
of free-homotopy classes to shortest representative cycles of free-homotopy
classes. We now show that the connected components we add to the core
surface of $J_{1}$ in part $\left(ii\right)$ of Definition \ref{def:core-surface}
are also mapped to the core surface of $J_{2}$.

Let $T$ be such a connected component, namely, a connected component
of the complement of the union of shortest cycles in $\Upsilon_{1}$
which is added to $\core\left(J_{1}\right)$ in part $\left(ii\right)$
of Definition \ref{def:core-surface}. Consider a $4g$-gon $O$ in
$T$. Let $T'$ denote the connected component of $f\left(O\right)$
in the complement of the union of shortest cycles in $\Upsilon_{2}$.
We claim that $T'$ contains finitely many $4g$-gons, and therefore,
by Definition \ref{def:core-surface}, must be contained in $\core\left(J_{2}\right)$.
In fact, all the $4g$-gons in $T'$ are images of $4g$-gons in $T$,
and therefore there are finitely many of them. Indeed, for every $4g$-gon
$O'$ in $T'$, there is a ``path of $4g$-gons'' inside $T'$ from
$f\left(O\right)$ to $O'$, where each $4g$-gon shares an edge with
the previous one. If we lift this path to a path of $4g$-gons from
$O$ in $\Upsilon_{1}$, it cannot leave the connected component $T$.
Hence $O'$ is an image of some $4g$-gon in $T$. This proves (\ref{enu:f restrict to a morphism of core surfaces}).

Now let $1\ne\gamma\in J_{1}$ and let $\CC$ be a shortest representative
cycle of $\gamma^{J_{1}}$ in $\Upsilon_{1}$, so its image $f\left(\CC\right)$
is a shortest representative for $\gamma^{J_{2}}$ in $\Upsilon_{2}$,
as noted above. For any other cycle $\CC'$ which is a shortest representative
of $\gamma^{J_{2}}$ in $\Upsilon_{2}$, the (free) homotopy between
$f\left(\CC\right)$ and $\CC'$ in $\Upsilon_{2}$ can be lifted
to $\Upsilon_{1}$ (this follows from the general theory of covering
spaces, e.g., \cite[Page 30]{hatcher2005algebraic}) and therefore,
in particular, $\CC'$ is an $f$-image of a cycle in $\Upsilon_{1}$
which represents $\gamma^{J_{1}}$ and of the same length as $\CC$.
This proves (\ref{enu:shortest cycles come from shortest cycles}).
\end{proof}
\begin{lem}
\label{lem:SBR =00003D=00003D> every shortest representative of a cycle is contained}Let
$Y$ be a strongly boundary reduced tiled surface embedded in $\Upsilon=J\backslash\tsg$
and let $\CC\subseteq Y^{\left(1\right)}$ be a non-nullhomotopic
cycle. Then \emph{every} shortest representative in $\Upsilon$ of
the free homotopy class of $\CC$ is contained in $Y$.
\end{lem}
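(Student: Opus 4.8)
The plan is to exhibit one shortest representative of the free homotopy class of $\CC$ that lies inside $Y$, and then to show that every other shortest representative is obtained from it by Birman--Series switches, each of which cannot leave a strongly boundary reduced tiled surface. To this end, since $Y$ is strongly boundary reduced it is in particular boundary reduced, so Corollary~\ref{cor:cycles in BR} supplies a cycle $\CC_{0}\subseteq Y^{(1)}$ which is freely homotopic to $\CC$ in $Y$, is cyclically shortest in $Y$, and is such that neither $\CC_{0}$ nor $\CC_{0}^{*}$ contains a long block or a long chain. By Theorem~\ref{thm:birman-series}(1) the word $w(\CC_{0})$ is then a shortest representative of its conjugacy class $\gamma^{\Gamma_{g}}$ in $\Gamma_{g}$, where $\gamma\in J$ represents the free homotopy class of $\CC$ in $\Upsilon$. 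In particular $\CC_{0}$ is already a shortest representative in $\Upsilon$: any cycle of $\Upsilon^{(1)}$ freely homotopic to it represents $\gamma^{\Gamma_{g}}$, hence has length at least $\ell(\gamma^{\Gamma_{g}})=\left|\CC_{0}\right|$.

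Now fix an arbitrary shortest representative $\CC'\subseteq\Upsilon^{(1)}$ of the free homotopy class of $\CC$; then $w(\CC')$ is also a cyclically shortest word for $\gamma^{\Gamma_{g}}$. Lift $\CC_{0}$ and $\CC'$ to $\gamma$-invariant bi-infinite periodic paths in $\tsg^{(1)}$ (compatible lifts exist because both represent the conjugacy class of $\gamma$), apply Theorem~\ref{thm:birman-series}(2) --- whose content is, at bottom, a comparison of exactly these lifts, the passage to $\langle\gamma\rangle\backslash\tsg$ being the quotient by $\langle\gamma\rangle$ --- to obtain a $\gamma$-equivariant chain of switches between them, and project everything down to $\Upsilon=J\backslash\tsg$. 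This yields cycles $\CC_{0}=\mathcal{D}_{0},\mathcal{D}_{1},\dots,\mathcal{D}_{m}=\CC'$ in $\Upsilon^{(1)}$, all of minimal length, such that either $(a)$ each $\mathcal{D}_{i+1}$ is obtained from $\mathcal{D}_{i}$ by a single half-block switch, or $(b)$ $\CC_{0}$ is a half-chain and $\CC'$ is obtained from it by a single half-chain switch.

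It remains to prove, by induction on $i$, that $\mathcal{D}_{i}\subseteq Y^{(1)}$; since $\mathcal{D}_{0}=\CC_{0}\subseteq Y^{(1)}$ this gives $\CC'\subseteq Y^{(1)}\subseteq Y$. Assume $\mathcal{D}_{i}\subseteq Y^{(1)}$ and look at the half-block switch producing $\mathcal{D}_{i+1}$: it replaces a half-block $b$, a sub-path of $\mathcal{D}_{i}$ or of $\mathcal{D}_{i}^{*}$ and so contained in $Y^{(1)}$, by the complementary half-block along some $4g$-gon $O$ of $\Upsilon$. If $O\in Y$ the complementary half-block lies in $Y^{(1)}$ and $\mathcal{D}_{i+1}\subseteq Y^{(1)}$. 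If $O\notin Y$ we reach a contradiction: as $b$ runs along $\partial O$ in the preset (counter-clockwise) orientation, $O$ is the unique $4g$-gon of $\Upsilon$ on the left of $b$, so in $Y$ no $4g$-gon sits on the left of any edge of $b$; hence $b$ is a sub-path of a boundary cycle of $Y$, so $\partial Y$ contains a half-block, contradicting strong boundary reducedness. Case $(b)$ is treated the same way for every $4g$-gon $O_{j}$ along which a block of the half-chain $\CC_{0}$ runs: if all the $O_{j}$ lay outside $Y$ then $\CC_{0}$ would trace a boundary cycle of $Y$ that is a half-chain, while if a maximal cyclic run of the $O_{j}$ lay outside $Y$ then, inspecting $\partial Y$ along that run exactly as in the proof of Proposition~\ref{prop:(S)BR-closure is (S)BR}, $\partial Y$ would contain a half-block, a long chain, or a half-chain; all of these are impossible. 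Hence every $O_{j}$ lies in $Y$ and $\CC'\subseteq Y^{(1)}$.

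The step I expect to be most delicate is this last one: making the orientation conventions precise enough that a half-block of $Y^{(1)}$ lying along a $4g$-gon absent from $Y$ is genuinely forced onto $\partial Y$, and carrying out the corresponding boundary analysis in the half-chain case along a run of missing $4g$-gons. Transferring Theorem~\ref{thm:birman-series}(2) from $\langle\gamma\rangle\backslash\tsg$ to $\Upsilon$ is routine covering-space bookkeeping, but it too should be written out carefully.
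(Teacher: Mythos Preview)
Your proposal is correct and follows essentially the same approach as the paper: find one shortest representative in $Y$ via Corollary~\ref{cor:cycles in BR}, invoke Birman--Series to connect any other shortest representative to it by half-block or half-chain switches, and use strong boundary reducedness to show each switch stays inside $Y$. The only packaging difference is that the paper transfers Theorem~\ref{thm:birman-series}(2) to $\Upsilon$ via the already-proved Lemma~\ref{lem:morphisms of core surfaces}(\ref{enu:shortest cycles come from shortest cycles}) and Lemma~\ref{lem:cyclic core surfaces}, whereas you do the covering-space bookkeeping directly through $\tsg$; your version also spells out the ``half-block on $\partial Y$'' argument that the paper leaves as a one-line remark.
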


\begin{proof}
By Corollary \ref{cor:cycles in BR}(\ref{enu:BR =00003D=00003D> exists shortest representative of every cycle}),
there is a shortest representative of the free homotopy class of $\CC$
in $Y$, and without loss of generality, assume that $\CC$ is shortest.
Further assume that $\CC$ represents the conjugacy class $\gamma^{J}$
in $J$, and consider the morphism $f\colon\left\langle \gamma\right\rangle \backslash\tsg\to\Upsilon$.
Lemma \ref{lem:morphisms of core surfaces}(\ref{enu:shortest cycles come from shortest cycles})
shows that every shortest representative of $\gamma^{J}$ in $\Upsilon$
is an $f$-image of a shortest representative in $\core\left(\left\langle \gamma\right\rangle \right)$,
and Lemma \ref{lem:cyclic core surfaces} shows that are finitely
many such representatives in $\core\left(\left\langle \gamma\right\rangle \right)$.
As the $f$-image of a half block (a half-chain) in $\core\left(\left\langle \gamma\right\rangle \right)$
is a half-block (half-chain respectively) in $\Upsilon$, we get that
all shortest representatives of $\gamma^{J}$ are obtained from $\CC$
by half-block switches or a half-chain switch. As $Y$ is strongly
boundary reduced, it contains the complement of every half-block or
half-chain in it. 
\end{proof}
\begin{lem}
\label{lem:BR containing J has complements two-punctures spheres}Let
$Y$ be a connected boundary reduced tiled surface embedded in $\Upsilon=J\backslash\tsg$
such that $p_{*}\left(\pi_{1}\left(Y,y\right)\right)=J\le\Gamma_{g}$.
Then every component of $\Upsilon\setminus Y$ is a twice-punctured
sphere with one funnel and one fake-puncture.
\end{lem}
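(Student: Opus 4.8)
The statement says: if $Y$ is a connected boundary reduced tiled surface embedded in $\Upsilon = J\backslash\tsg$ with $p_*(\pi_1(Y,y)) = J$, then every component of $\Upsilon \setminus Y$ is a twice-punctured sphere with one funnel and one fake-puncture. This is essentially the ``$Y$ captures the whole fundamental group'' version of the analysis carried out for core surfaces in Proposition \ref{prop:properties of core surfaces}(\ref{enu:core(j) obtained by trimming funnels}), so I would follow that proof closely. First I would pass to the thick version: replace $Y$ by a realization $\mathbb{Y}$ of its thick version inside $\Upsilon$, which does not change the homotopy type of $Y$ nor the topology of the complement. Since $Y$ is boundary reduced, the same is true of $\mathbb{Y}$, and its boundary cycles $\delta$ satisfy that $\delta^*$ contains no long block or long chain (this is built into the definition of boundary reduced). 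Let $C$ be a connected component of $\Upsilon \setminus \mathbb{Y}$ and let $\overline{C}$ be its closure, viewed as a $2$-complex by duplicating any vertices/edges of $\Upsilon$ that appear in several boundary pieces. Then $\overline{C}$ satisfies \textbf{P1} and \textbf{P3}, and because every boundary cycle of $\overline{C}$ is (the inverse of) a boundary cycle of $\mathbb{Y}$, it contains no long block or long chain; hence $\overline{C}$ is boundary reduced, and by Proposition \ref{prop:BR core graph with octagons is a tiled surface} it is a genuine tiled surface. Being a closure of an open subsurface of $\Upsilon$, it is moreover a proper surface.

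Now I would invoke Lemma \ref{lem:If proper surface has g>0 or >3 punctures, it has a shortest element not contained in boundary}: if $\overline{C}$ had positive genus or at least three boundary components/punctures, it would contain a cyclically shortest cycle $\alpha$ (among its free homotopy class in $\overline{C}$) not contained in $\partial\overline{C}$, and by Corollary \ref{cor:cycles in BR}(\ref{enu:in BR cyclically shortest =00003D=00003D> no long blocks and chains}) this $\alpha$ has no long block or long chain in $\alpha$ or $\alpha^*$. Such an $\alpha$ is a shortest representative of its free homotopy class in $\Upsilon$ as well. The key point is that this leads to a contradiction with the hypothesis $p_*(\pi_1(Y,y)) = J$: the cycle $\alpha$ lives in $\overline{C}$, which is disjoint in its interior from $Y$, yet $\alpha$ represents a conjugacy class $\gamma^{J}$ with $1 \ne \gamma \in J$; since $p_*(\pi_1(Y,y)) = J$, there is a cycle $\CC$ in $Y^{(1)}$ representing $\gamma^{J}$, which by Corollary \ref{cor:cycles in BR} may be taken shortest in $Y$ and hence shortest in $\Upsilon$, with no long blocks or chains. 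By Theorem \ref{thm:birman-series}(2), $\CC$ and $\alpha$ differ by finitely many half-block switches or a single half-chain switch. But a half-block switch moves a cycle across a single $4g$-gon, and $\CC \subseteq Y$ while $\alpha$ lies in the interior-disjoint region $\overline{C}$ — so $\CC$ and $\alpha$ cannot be connected by such moves unless $\alpha \subseteq \partial\overline{C}$, contradicting the choice of $\alpha$. (More carefully: the half-block/half-chain switches connecting $\CC$ to $\alpha$ trace out a sequence of $4g$-gons in $\Upsilon$; if $\CC \subseteq Y$ then the first $4g$-gon annexed is adjacent to $Y$, and one shows inductively every intermediate cycle meets $\overline{Y}$, so $\alpha$ touches $\partial Y = \partial \overline{C}$ — but $\alpha$ is simple and of minimal length on both sides, forcing $\alpha \subseteq \partial\overline{C}$.) This rules out positive genus and $b \ge 3$, so each $\overline{C}$ is a sphere with one or two boundary components/punctures.

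Finally I would pin down that each $C$ has exactly two punctures — one funnel, one fake — exactly as in the proof of Proposition \ref{prop:properties of core surfaces}(\ref{enu:core(j) obtained by trimming funnels}): $C$ cannot be a disc, since then the disc would be glued onto a boundary cycle $\delta$ of $\mathbb{Y}$ making $\delta$ null-homotopic in $\Upsilon$, which is impossible because $\delta^*$ has no long block/chain and hence (Corollary \ref{cor:no long blocks/chains =00003D=00003D> cyclically shortest}) $\delta$ is a shortest representative of a nontrivial free homotopy class — or alternatively because $\Upsilon = J\backslash\tsg$ has no contractible boundary cycles bounded by finitely many $4g$-gons in a component of its complement. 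It cannot be a once-punctured disc whose single puncture is a fake-puncture, since then $C$ would be simply connected with a compact closure glued along $\partial Y$, again contradicting that the corresponding boundary cycle is essential. And it cannot have two funnels, since $\Upsilon$ is connected (two funnels in one component would disconnect $Y$ from part of $\Upsilon$, or more precisely $\Upsilon \setminus C$ would be disconnected while $Y \subseteq \Upsilon \setminus C$ is connected and carries all of $\pi_1(\Upsilon)$). Hence $C$ is a twice-punctured sphere with exactly one funnel and one fake-puncture.

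**Main obstacle.** The delicate step is the contradiction argument showing $\overline{C}$ has genus $0$ and at most two ends: one must argue that a shortest representative cycle living inside $\overline{C}$ but away from $\partial\overline{C}$ cannot be reconciled with the hypothesis that all of $\pi_1(\Upsilon) = J$ is carried by $Y$. The cleanest route is via Theorem \ref{thm:birman-series}(2) and the observation that half-block and half-chain switches move a cycle across $4g$-gons one (or one chain) at a time, so they cannot carry a cycle from the interior of $Y$ into the interior of a complementary component $C$ without passing through $\partial Y$; combined with the fact that the relevant cycles are simple and length-minimizing on both sides of each boundary cycle (as in Lemma \ref{lem:cyclic core surfaces}), this forces $\alpha \subseteq \partial\overline{C}$. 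Getting this bookkeeping precise — tracking which side of each boundary cycle the intermediate cycles lie on — is the part that needs genuine care; everything else is a direct transcription of the core-surface arguments already in the paper.
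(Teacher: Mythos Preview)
Your plan adapts the proof of Proposition~\ref{prop:properties of core surfaces}(\ref{enu:core(j) obtained by trimming funnels}), but that argument hinges on a feature unavailable here: for $\core(J)$, any cyclically shortest cycle in $\overline{C}$ automatically lies in $\core(J)$ \emph{by definition}, hence in $\partial\overline{C}$. For a general boundary reduced $Y$ there is no such built-in containment, and your substitute---the Birman--Series switch-tracking---does not close the gap. The inductive claim ``every intermediate cycle meets $\overline{Y}$'' is unjustified: if all the edges of $\CC_{i-1}$ lying in $\overline{Y}$ happen to form exactly the half-block being switched, $\CC_i$ need not meet $\overline{Y}$ at all. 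And even granting that $\alpha$ touches $\partial\overline{C}$, the leap to $\alpha\subseteq\partial\overline{C}$ is asserted, not argued; since $Y$ is only boundary reduced, $\partial Y$ may carry half-blocks whose complements lie in $C$, so shortest cycles can genuinely straddle $\partial Y$. Your closing case analysis also slips: a once-punctured disc is an annulus, not simply connected; the single-funnel case is missing; and the two-fake-puncture case is not clearly handled.

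The paper's proof is different and lighter. The key observation is that since $p_*(\pi_1(Y,y))=J$, the subcomplex $Y$ contains \emph{some} representative (not necessarily shortest) of every free homotopy class in $\Upsilon$, and the topology of $C$ is then constrained by elementary surface arguments rather than by half-block combinatorics. Positive genus is ruled out by intersection number: curves $\alpha,\beta$ in the interior of $C$ with $i(\alpha,\beta)=1$ force every $\Upsilon$-representative of $[\alpha]$ to meet $\beta$, so none can sit in $Y$. A single puncture is impossible (a lone funnel disconnects $\Upsilon$; a lone fake-puncture makes the corresponding $\partial Y$-cycle nullhomotopic, hence containing a long block by Dehn). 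Two fake-punctures are ruled out because, $Y$ being connected, a curve crossing $C$ once between them represents a class with no representative in $Y$. Two funnels are ruled out by a short isotopy/annulus argument.
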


\begin{proof}
As $p_{*}\left(\pi_{1}\left(Y,y\right)\right)=J$, $Y$ contains a
representative of every free homotopy class in $\Upsilon$. Let $C$
be a connected component of $\Upsilon\setminus Y$. As in the proof
of Lemma \ref{lem:If proper surface has g>0 or >3 punctures, it has a shortest element not contained in boundary},
$C$ cannot have positive genus. It cannot be a once-punctured sphere
because if this puncture is a funnel, $\Upsilon$ is not connected,
and if it is a fake-puncture, the boundary component of $Y$ along
this fake-puncture is not boundary reduced (by \cite{Dehn}). In addition,
$C$ cannot have two fake-punctures, because $Y$ is connected and
so there would be a free homotopy class of essential curves such that
any representative must go through $C$ (between these two punctures).
If $C$ contains a funnel, then any cycle in $C$ representing a loop
around this funnel has a shortest representative in $Y$. These two
cycles are isotopic in $\Upsilon$ and therefore bound an annulus.
So $C$ cannot contain two different funnels. We conclude that $C$
is a twice-punctured sphere with one funnel and one fake-puncture.
\end{proof}
\begin{lem}
\label{lem:SBR containing J contains core(J)}Let $Y$ be a strongly
boundary reduced tiled surface embedded in $\Upsilon=J\backslash\tsg$
such that $p_{*}\left(\pi_{1}\left(Y,y\right)\right)=J\le\Gamma_{g}$.
Then $Y\supseteq\core\left(J\right)$.
\end{lem}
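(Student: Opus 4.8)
The plan is to show $Y \supseteq \core(J)$ by verifying that $Y$ contains both types of cells that make up $\core(J)$ according to Definition \ref{def:core-surface}: the shortest-representative cycles of nontrivial conjugacy classes of $J$, and the finite-$4g$-gon components of their complement.

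First I would recall that since $p_*(\pi_1(Y,y)) = J$, the tiled surface $Y$ contains at least one cycle representing every nontrivial free homotopy class of $\Upsilon$ (indeed, every conjugacy class of $J$ arises from a loop in $Y$ based at $y$). Combined with Corollary \ref{cor:cycles in BR}(\ref{enu:BR =00003D=00003D> exists shortest representative of every cycle}) — applicable since $Y$ is boundary reduced — each such class has a \emph{shortest} representative cycle already contained in $Y^{(1)}$. Now Lemma \ref{lem:SBR =00003D=00003D> every shortest representative of a cycle is contained} is exactly the tool needed: since $Y$ is \emph{strongly} boundary reduced, \emph{every} shortest representative in $\Upsilon$ of that free homotopy class is contained in $Y$, not just the one we started with. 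Hence $Y$ contains the entire union of shortest representative cycles of nontrivial conjugacy classes of $J$ — that is, all of step $(i)$ of Definition \ref{def:core-surface}.

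Next I would handle step $(ii)$: the connected components of the complement (of the step-$(i)$ union inside $\Upsilon$) that contain only finitely many $4g$-gons. Let $T$ be such a component; I must show $T \subseteq Y$. The key observation is that $Y$ already contains the full step-$(i)$ union $\core(J)^{\mathrm{step}(i)}$, so $Y$ is ``sandwiched'' between this union and $\Upsilon$; by Proposition \ref{prop:properties of core surfaces}(\ref{enu:step(ii) contains only open discs or annuli}), $T$ is an open disc or open annulus. If any $4g$-gon $O$ of $T$ were \emph{not} in $Y$, then looking at the boundary of $Y$ near $T$: since $Y$ contains $\partial T$ (the step-$(i)$ cycles bordering $T$) but misses interior $4g$-gons of $T$, there would be a boundary cycle of $Y$ running along those $\partial T$ cycles together with exposed edges — I'd argue this produces a half-block or half-chain in $\partial Y$ (using that $T$ is a disc or annulus with all its $4g$-gons ``reducible'', much as in the proof of Proposition \ref{prop:properties of core surfaces}(\ref{enu:boundary cycles of core surfaces represent shortest words}) where internal vertices of long blocks have degree two), contradicting strong boundary reducedness of $Y$. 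Alternatively, one can argue more cleanly: the $\sbr$-closure (or just the strongly-boundary-reduced property) forces $Y$ to annex the complement of every half-block and half-chain along its boundary, and a disc or annulus component $T$ with reducible boundary cannot be left partially outside a strongly boundary reduced surface containing $\partial T$.

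I expect the main obstacle to be the second half — carefully showing that strong boundary reducedness of $Y$ forces the finite-$4g$-gon complementary components into $Y$. The subtlety is that $T$ might border $Y$ only along part of its boundary, or a single vertex/edge of $\partial T$ might be glued in complicated ways; one must verify that the ``exposed'' part of $\partial Y$ adjacent to such a $T$ genuinely contains a half-block or half-chain rather than merely a long block. This likely requires the same case analysis on whether $T$ is a disc or an annulus as in Proposition \ref{prop:properties of core surfaces}, together with the degree-two observation for internal vertices along the step-$(i)$ cycles that border $T$. Once that is in place, combining the two parts gives $\core(J) = \core(J)^{\mathrm{step}(i)} \cup (\text{finite components}) \subseteq Y$, completing the proof.
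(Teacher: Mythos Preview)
Your handling of step $(i)$ is correct and matches the paper exactly: Lemma \ref{lem:SBR =00003D=00003D> every shortest representative of a cycle is contained} yields that $Y$ contains every shortest representative of every nontrivial free-homotopy class, hence all of the step-$(i)$ subcomplex.

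For step $(ii)$ you take a different and harder route than the paper, and this is where your argument is incomplete. You try to show directly that a finite-$4g$-gon component $T$ left partially outside $Y$ would force a half-block or half-chain onto $\partial Y$; you yourself flag this as the main obstacle, and rightly so --- it is not clear that the exposed portion of $\partial Y$ inside $T$ must contain a \emph{half}-block or \emph{half}-chain rather than only shorter blocks, and the case analysis you sketch (disc versus annulus, degree-two vertices) does not obviously close this gap. The paper sidesteps all of this by invoking Lemma \ref{lem:BR containing J has complements two-punctures spheres}: since $Y$ is (in particular) boundary reduced with $p_*(\pi_1(Y,y))=J$, every component of $\Upsilon\setminus Y$ is a twice-punctured sphere with one funnel, hence contains infinitely many $4g$-gons. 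Because $Y$ already contains the step-$(i)$ union, each component of $\Upsilon\setminus Y$ lies inside a single component of the step-$(i)$ complement; but a step-$(ii)$ component $T$ has only finitely many $4g$-gons, so it cannot contain any component of $\Upsilon\setminus Y$, and therefore $T\subseteq Y$. This two-line argument replaces your entire second half and, notably, uses only the boundary-reduced property --- the strong boundary reducedness of $Y$ is spent entirely on step $(i)$.
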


\begin{proof}
By Lemma \ref{lem:SBR =00003D=00003D> every shortest representative of a cycle is contained},
$Y$ contains every shortest representative of every non-trivial free
homotopy class, and so contains the subcomplex from part $\left(i\right)$
of Definition \ref{def:core-surface}. By Lemma \ref{lem:BR containing J has complements two-punctures spheres},
every component of the complement of $Y$ contains a funnel and thus
infinitely many $4g$-gons. In particular, it cannot be contained
in one of the components added to $\core\left(J\right)$ in part $\left(ii\right)$
of Definition \ref{def:core-surface}. This completes the proof.
\end{proof}
\begin{prop}
\label{prop:core surfaces of fg groups are compact}If $J\le\Gamma$
is finitely generated then $\core\left(J\right)$ is compact.
\end{prop}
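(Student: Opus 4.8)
The plan is to exhibit $\core(J)$ as the $\sbr$-closure of a compact piece and then to prove that, in this particular situation, the $\sbr$-closure is compact by a monovariant argument modelled on the proof of Lemma~\ref{lem:cyclic core surfaces}. If $J=\{1\}$ the statement is immediate, so assume $J\ne\{1\}$. Fix a vertex $v\in\core(J)$; by Proposition~\ref{prop:properties of core surfaces}(\ref{enu:pi1 of core}) the subgroup $p_{*}\pi_{1}(\core(J),v)$ equals $p_{*}\pi_{1}(\Upsilon,v)$, which is some conjugate of $J$, and the core surface of that conjugate is isomorphic to $\core(J)$; since conjugation preserves being finitely generated, we may assume $p_{*}\pi_{1}(\core(J),v)=J$. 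Choose generators $J=\langle\gamma_{1},\dots,\gamma_{k}\rangle$ with every $\gamma_{i}\ne 1$, realise each $\gamma_{i}$ by a non-backtracking loop $\ell_{i}$ based at $v$ inside $\core(J)^{(1)}$, and set $Y_{0}:=\{v\}\cup\bigcup_{i}\ell_{i}$. Then $Y_{0}$ is a finite, connected tiled sub-surface of $\core(J)$ with no isolated vertices and no leaves, and $p_{*}\pi_{1}(Y_{0},v)=J$.

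Next I claim that $\core(J)=\sbr(Y_{0}\hookrightarrow\Upsilon)$. On one hand $Y_{0}\subseteq\core(J)$ and $\core(J)$ is strongly boundary reduced by Proposition~\ref{prop:properties of core surfaces}(\ref{enu:core surfaces are strongly boundary reduced}), so $\core(J)$ is one of the surfaces intersected in Definition~\ref{def:BR- and SBR-closure} and hence $\sbr(Y_{0}\hookrightarrow\Upsilon)\subseteq\core(J)$. On the other hand $\sbr(Y_{0}\hookrightarrow\Upsilon)$ is strongly boundary reduced by Proposition~\ref{prop:(S)BR-closure is (S)BR}, and being squeezed between $Y_{0}$ and $\Upsilon$ it satisfies $p_{*}\pi_{1}\bigl(\sbr(Y_{0}\hookrightarrow\Upsilon),v\bigr)=J$; Lemma~\ref{lem:SBR containing J contains core(J)} then gives $\core(J)\subseteq\sbr(Y_{0}\hookrightarrow\Upsilon)$.

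It remains to prove that $\sbr(Y_{0}\hookrightarrow\Upsilon)$ is compact. As in the proof of Proposition~\ref{prop:BR closure of compact is compact}, I construct it from the bottom up: put $Y'=Y_{0}$, and while $Y'$ fails to be strongly boundary reduced its boundary contains a half-block or a half-chain, the bordering $4g$-gon(s) of which must lie in $\sbr(Y_{0}\hookrightarrow\Upsilon)$ by the same argument as in the proof of Proposition~\ref{prop:(S)BR-closure is (S)BR}; I annex them to $Y'$. Each step keeps $Y'$ a compact tiled sub-surface with no isolated vertices and no leaves, and since every annexed $4g$-gon lies in $\sbr(Y_{0}\hookrightarrow\Upsilon)$ we have $Y_{0}\subseteq Y'\subseteq\sbr(Y_{0}\hookrightarrow\Upsilon)$ throughout, so the process ends precisely when $Y'$ is strongly boundary reduced and then $Y'=\sbr(Y_{0}\hookrightarrow\Upsilon)$. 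For termination, note that annexing the $4g$-gon along a half-block deletes $2g$ boundary edges and creates at most $2g$, and annexing the $4g$-gons along a half-chain replaces it by its complement of the same length; hence $|\partial Y'|$ never increases. When $|\partial Y'|$ is not strictly decreased by a step, the complement half-block (resp.\ each annexed $4g$-gon of a half-chain switch) is entirely new, and then --- exactly as computed in the proof of Lemma~\ref{lem:cyclic core surfaces} and depicted in Figure~\ref{fig:half block switch} --- the number $h(Y')$ of hanging half-edges of $Y'_{+}$ strictly increases (by $8g(g-1)>0$ in the half-block case). Since $|\partial Y'|$ is a non-negative integer bounded by $|\partial Y_{0}|$, since $h(Y')\le(4g-2)\,|\partial Y'|\le(4g-2)\,|\partial Y_{0}|$ for the compact, leafless, isolated-vertex-free surface $Y'$, and since every step either strictly decreases $|\partial Y'|$ or strictly increases $h(Y')$, only finitely many steps occur. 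Thus $\core(J)=\sbr(Y_{0}\hookrightarrow\Upsilon)$ is obtained from the compact $Y_{0}$ by annexing finitely many $4g$-gons, hence is compact.

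The crux --- and the only place real care is needed --- is the termination of the annexation process: this is exactly the phenomenon of Figure~\ref{fig:SBR may not terminate}, where the $\sbr$-closure of a compact surface inside an arbitrary strongly boundary reduced ambient may well be infinite. What rules this out here is that the ambient is $\Upsilon=J\backslash\tsg$ itself together with $p_{*}\pi_{1}(Y_{0})=J$, which forces $|\partial Y'|$ to behave monotonically and lets the hanging-half-edge count take over once the boundary length stabilises, just as in Lemma~\ref{lem:cyclic core surfaces}. The only routine computation left open is the bookkeeping of hanging half-edges under a half-chain switch that leaves the boundary length unchanged --- the analogue for half-chains of the count displayed in Figure~\ref{fig:half block switch}.
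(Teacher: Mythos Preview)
Your overall architecture is sound --- building a compact seed $Y_{0}$ with $p_{*}\pi_{1}(Y_{0},v)=J$, identifying $\core(J)$ with $\sbr(Y_{0}\hookrightarrow\Upsilon)$, and growing $Y_{0}$ greedily --- but the termination argument has a genuine gap. You claim that when a half-block annexation leaves $|\partial Y'|$ unchanged, the complement half-block is ``entirely new'' and hence $h(Y')$ jumps by $8g(g-1)$. What $\Delta|\partial Y'|=0$ actually gives you (since $|\partial Y'|=2E-4gF$) is only that all $2g$ complement \emph{edges} are new; it says nothing about the $2g-1$ internal \emph{vertices} of the complement, any of which may already lie in $Y'$ through other edges of the seed graph $Y_{0}$. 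A direct count shows $\Delta h=4g(n-1)$ where $n$ is the number of new internal vertices, so $h$ can stagnate ($n=1$) or even drop ($n=0$). The same issue arises for half-chains, and this is not the ``routine computation'' you defer at the end. Indeed, your monovariant uses nothing about the ambient being $\Upsilon$ --- the formulas $\Delta|\partial Y'|=2m-4g$ and $\Delta h=4gn-2m$ are purely local --- so if it worked it would prove that the $\sbr$-closure of \emph{any} compact tiled surface in \emph{any} strongly boundary reduced ambient is compact, directly contradicting the example of Figure~\ref{fig:SBR may not terminate}. Your last paragraph correctly identifies this as the crux but the sentence ``what rules this out here is that the ambient is $\Upsilon$ together with $p_{*}\pi_{1}(Y_{0})=J$'' is an assertion, not an argument.

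The paper's proof sidesteps this by separating the two stages. First take $Y'=\br(Y_{0}\hookrightarrow\Upsilon)$: here every annexation (along a long block or long chain) \emph{strictly} decreases $|\partial Y'|$, so Proposition~\ref{prop:BR closure of compact is compact} gives compactness with no need for the hanging-half-edge count. The point of this intermediate step is that once $Y'$ is boundary reduced, each boundary cycle $\delta$ is a cyclically shortest representative of some $\gamma^{J}$ (Corollary~\ref{cor:cycles in BR}). Now the $\sbr$-step is controlled not by a monovariant but \emph{structurally}: every $4g$-gon annexed along a half-block or half-chain of $\delta$ is the image, under the morphism $\langle\gamma\rangle\backslash\tsg\to\Upsilon$, of a $4g$-gon of $\core(\langle\gamma\rangle)$, and Lemma~\ref{lem:cyclic core surfaces} says there are only finitely many of those. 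This is precisely where the hypothesis that the ambient is $\Upsilon$ and that $Y'$ already carries $J$ enters. With finitely many boundary cycles, the $\sbr$-step adds only finitely many $4g$-gons, and the resulting compact strongly boundary reduced surface contains $\core(J)$ by Lemma~\ref{lem:SBR containing J contains core(J)}. To repair your argument you need this two-stage structure, not a sharper monovariant.
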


\begin{proof}
Suppose that $J\le\Gamma$ is finitely generated and let $S=\left\{ w_{1},\ldots,w_{k}\right\} $
be a finite generating set represented as words in $\left\{ a_{1}^{\pm1},\ldots,b_{g}^{\pm1}\right\} $.
Let $\left(\Upsilon,q\right)=J\backslash\left(\tsg,u\right)$ be the
pointed quotient of $\tsg$ with the base point $q$ being the image
of the base point $u$. Then $w_{1},\ldots,w_{k}$ correspond to unique,
possibly not cyclically reduced, cycles $\CC_{1},\ldots,\CC_{k}$
based at $q$. Consider the sub-surface $Y$ of $\Upsilon$ consisting
of the union $\bigcup_{i=1}^{k}\CC_{i}$, and let $Y'=\br\left(Y\hookrightarrow\Upsilon\right)$.
By Propositions \ref{prop:(S)BR-closure is (S)BR} and \ref{prop:BR closure of compact is compact},
$Y'$ is a compact boundary reduced tiled surface containing $Y$.

Finally, enlarge $Y'$ to obtain a slightly larger sub-surface $Y''\subseteq\Upsilon$
by repeatedly adding any $4g$-gon which borders some half-block or
half-chain in $\partial Y'$. As in the proof of Lemma \ref{lem:SBR =00003D=00003D> every shortest representative of a cycle is contained},
the $4g$-gons added next to a boundary cycle representing $\gamma^{J}$
are all images of the finitely many $4g$-gons in $\core\left(\left\langle \gamma\right\rangle \right)$,
and so there are finitely many steps near this boundary component.
As $Y'$ is compact, it has finitely many boundary components, and
hence $Y''$ is constructed in finitely many step and is compact too.
Moreover, by the way it is constructed, $Y''$ is strongly boundary
reduced. By Lemma \ref{lem:SBR containing J contains core(J)}, the
compact $Y''$ contains $\core\left(J\right)$. This proves the proposition.
\end{proof}
We can now give an intrinsic definition for a core-surface, not relying
on a given subgroup of $\Gamma_{g}$.
\begin{prop}[Intrinsic definition of a core surface]
\label{prop:intrinsic def of core surface} A (non-empty) tiled surface
is a core surface if and only if it is $\left(i\right)$ connected,
$\left(ii\right)$ strongly boundary reduced, $\left(iii\right)$
every boundary cycle is a cyclically shortest representative of its
free homotopy class, and $\left(iv\right)$ it contains no funnels.
\end{prop}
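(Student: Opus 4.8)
The plan is to prove both directions. For the forward direction, suppose $Y$ is a core surface, say $Y=\core(J)$ for some $J\le\Gamma_g$ sitting inside $\Upsilon=J\backslash\tsg$. Then properties $(i)$ and $(ii)$ are exactly Proposition \ref{prop:properties of core surfaces}\eqref{enu:core surfaces are connected} and \eqref{enu:core surfaces are strongly boundary reduced}. Property $(iii)$ follows from Proposition \ref{prop:properties of core surfaces}\eqref{enu:boundary cycles of core surfaces represent shortest words}: each boundary cycle $w(\delta)$ is a cyclically shortest word, and by Corollary \ref{cor:no long blocks/chains =00003D=00003D> cyclically shortest} (or directly from Theorem \ref{thm:birman-series}) it is then a shortest representative of its free homotopy class inside $Y$, since any shorter competitor in $Y$ would represent the same conjugacy class in $\Gamma_g$. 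Property $(iv)$ holds because $\core(J)$ is a closed subcomplex of $\Upsilon$ whose complement components are, by Proposition \ref{prop:properties of core surfaces}\eqref{enu:core(j) obtained by trimming funnels}, twice-punctured spheres each with exactly one funnel — so no funnel of $\Upsilon$ lies inside $Y$, which is what ``$Y$ contains no funnels'' means (here one should first note that ``funnel'' makes sense intrinsically: $Y$ is a tiled surface, so we may speak of the funnels of its thick version, or equivalently read the condition off from the boundary cycles and the ribbon structure).

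For the converse, suppose $Y$ satisfies $(i)$–$(iv)$. Since $Y$ is a tiled surface, it embeds in some covering space of $\Sigma_g$; as $Y$ is connected, fix a basepoint $y$ and set $J\eqdf p_*(\pi_1(Y,y))\le\Gamma_g$, and let $\Upsilon=J\backslash\tsg$ with the induced embedding $Y\hookrightarrow\Upsilon$ (this is the lift $r$ from the proof of Proposition \ref{prop:comb-def-of-tiled-surface}, which is injective precisely because $Y$ is boundary reduced, via Proposition \ref{prop:BR core graph with octagons is a tiled surface}). By construction $p_*(\pi_1(Y,y))=J$, so by Lemma \ref{lem:SBR containing J contains core(J)} (using that $Y$ is strongly boundary reduced), we have $\core(J)\subseteq Y$. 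It remains to prove the reverse inclusion $Y\subseteq\core(J)$. Every non-nullhomotopic cycle $\CC$ in $Y^{(1)}$ has a shortest representative of its free homotopy class lying in $Y$ (Corollary \ref{cor:cycles in BR}\eqref{enu:BR =00003D=00003D> exists shortest representative of every cycle}); by Lemma \ref{lem:SBR =00003D=00003D> every shortest representative of a cycle is contained} every shortest representative of that class in $\Upsilon$ already lies in $Y$. Hence $Y$ contains all shortest representative cycles of non-trivial conjugacy classes of $J$, i.e.\ it contains the subcomplex from step $(i)$ of Definition \ref{def:core-surface}.

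For the step-$(ii)$ part, I would argue that every component $C$ of $\Upsilon\setminus Y$ contains a funnel, hence infinitely many $4g$-gons, hence is not one of the components annexed in step $(ii)$; this gives $Y\supseteq\core(J)$ and, combined with the above, $Y=\core(J)$. Here is where conditions $(iii)$ and $(iv)$ do the work. Condition $(iii)$ — every boundary cycle of $Y$ is cyclically shortest — lets us apply Lemma \ref{lem:BR containing J has complements two-punctures spheres} (noting $Y$ is boundary reduced, being strongly boundary reduced): every component of $\Upsilon\setminus Y$ is a twice-punctured sphere with one funnel and one fake-puncture. In particular every complement component already contains a funnel of $\Upsilon$, so none of them is annexed in step $(ii)$, giving $Y\supseteq\core(J)$ as desired. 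Condition $(iv)$ is then used to rule out the degenerate possibility that $Y$ itself ``is'' just a thickened shortest cycle with funnels attached, i.e.\ to ensure $\Upsilon\setminus Y$ has no component that is a once-punctured disc glued back along a boundary cycle of $Y$ — more precisely, $(iv)$ guarantees that the fake-puncture sides, not the funnel sides, are what border $\partial Y$, so that $Y$ does not secretly contain a funnel that should have been trimmed.

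I expect the main obstacle to be bookkeeping rather than a deep idea: making the notion of ``funnel of $Y$'' precise \emph{intrinsically} (without reference to a particular ambient $\Upsilon$), and checking that the $J$ produced from $\pi_1(Y,y)$ is independent of the basepoint up to conjugacy so that $\core(J)$ is well-defined and genuinely equals $Y$ on the nose (not just up to isomorphism). One must also be careful that the embedding $Y\hookrightarrow\Upsilon$ identifies the boundary cycles of $Y$ with genuine boundary cycles of a subcomplex — this is automatic from the thick-version discussion in Proposition \ref{prop:comb-def-of-thick-version}, but it should be invoked explicitly. No new machinery beyond Lemmas \ref{lem:SBR =00003D=00003D> every shortest representative of a cycle is contained}, \ref{lem:BR containing J has complements two-punctures spheres}, \ref{lem:SBR containing J contains core(J)} and the Birman–Series corollaries should be needed.
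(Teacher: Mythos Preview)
Your forward direction is fine and matches the paper. The converse, however, has a genuine gap: you announce that you will prove $Y\subseteq\core(J)$, but everything you actually argue establishes only $Y\supseteq\core(J)$ a second time. Showing that $Y$ contains all shortest representatives, and that every component of $\Upsilon\setminus Y$ has a funnel (hence infinitely many $4g$-gons), only tells you that $\core(J)$ cannot stick out past $Y$; it says nothing about why $Y$ cannot stick out past $\core(J)$. Your sentence ``this gives $Y\supseteq\core(J)$ and, combined with the above, $Y=\core(J)$'' does not follow, because ``the above'' was also the containment $\core(J)\subseteq Y$.

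The missing step, and this is exactly where $(iii)$ and $(iv)$ earn their keep, is to look at a component $C$ of $\Upsilon\setminus\core(J)$ (not of $\Upsilon\setminus Y$) and show $Y\cap C=\emptyset$. By Proposition~\ref{prop:properties of core surfaces}\eqref{enu:core(j) obtained by trimming funnels}, $C$ is a twice-punctured sphere with one funnel. Condition $(iv)$ forbids $Y$ from containing all of $C$. If $Y$ contained a proper nonempty piece of $C$, then (since $Y\supseteq\core(J)$) some boundary cycle $\delta$ of $Y$ would lie in the interior of $C$; but every shortest representative of the free-homotopy class of $\delta$ already lies in $\core(J)$ (that is what step $(i)$ of Definition~\ref{def:core-surface} collects), so $\delta$ would not be cyclically shortest, contradicting $(iii)$. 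Hence $Y\cap C=\emptyset$ for every such $C$, giving $Y\subseteq\core(J)$. Note also that your invocation of Lemma~\ref{lem:BR containing J has complements two-punctures spheres} does not use $(iii)$ at all; that lemma needs only that $Y$ is boundary reduced and carries the full fundamental group.
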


Note that if $Y$ is a \emph{compact }tiled surface, then condition
$\left(iv\right)$ in the proposition automatically holds.
\begin{proof}
That a core surface satisfies properties $\left(i\right)$, $\left(ii\right)$
and $\left(iii\right)$ is the content of items (\ref{enu:core surfaces are connected}),
(\ref{enu:core surfaces are strongly boundary reduced}) and (\ref{enu:boundary cycles of core surfaces represent shortest words}),
respectively, in Proposition \ref{prop:properties of core surfaces}.
Now let $Y=\core\left(J\right)\subseteq\Upsilon=J\backslash\tsg$
for some $J\le\Gamma_{g}$. Let $p$ be a funnel-puncture in $\Upsilon$
and let $\CC\subseteq\Upsilon^{\left(1\right)}$ be a cyclically shortest
representative of a simple closed curve around $p$. As in the proof
of Lemma \ref{lem:SBR =00003D=00003D> every shortest representative of a cycle is contained},
there are finitely many shortest representatives of the free homotopy
class of $\CC$, and thus the connected component $C$ of $p$ in
the complement of these representatives in $\Upsilon$, contains infinitely
many $4g$-gons. Moreover, $\Upsilon\setminus C$ is strongly boundary
reduced and a retract of $\Upsilon$, and therefore contains every
shortest representative of free homotopy classes in $\Upsilon$. By
Definition \ref{def:core-surface}, $C$ cannot belong to $\core\left(J\right)$
and thus $\left(iv\right)$ holds.

Conversely, assume that $Y$ is a tiled surface satisfying these four
assumptions. Let $p\colon Y\to\Sigma_{g}$ be the immersion. Choose
some vertex $y\in Y$ and let $J=p_{*}\left(\pi_{1}\left(Y,y\right)\right)\le\Gamma_{g}$.
As in the proof of Proposition \ref{prop:comb-def-of-tiled-surface},
there is an embedding $r\colon Y\hookrightarrow\Upsilon=J\backslash\tsg$,
so we may think of $Y$ as a sub-complex of $\Upsilon$. Since $Y$
has the same fundamental group as $\Upsilon$ and is strongly boundary
reduced, it contains every shortest representatives of free homotopy
classes in $\Upsilon$, by Lemma \ref{lem:SBR =00003D=00003D> every shortest representative of a cycle is contained}.
As in the proof of Proposition \ref{prop:core surfaces of fg groups are compact},
every connected component of $\Upsilon\setminus Y$ is a twice-punctured
sphere with one funnel and one fake-puncture and, in particular, contains
infinitely many $4g$-gons. Hence $Y\supseteq\core\left(J\right)$. 

Finally, by Proposition \ref{prop:properties of core surfaces}(\ref{enu:core(j) obtained by trimming funnels}),
every connected component $C$ of $\Upsilon\setminus\core\left(J\right)$
is a twice-punctured sphere, with one funnel and one fake-puncture.
By $\left(iv\right)$, $Y$ does not contain the whole of $C$, and
by $\left(iii\right)$, $Y$ does not contain any point of $C$. We
conclude that $Y=\core\left(J\right)$.
\end{proof}
Using the intrinsic definition of core surfaces from Proposition \ref{prop:intrinsic def of core surface},
we conclude that we have a one-to-one bijection
\begin{equation}
\left\{ {\mathrm{conjugacy~classes~of}\atop \mathrm{subgroups~of}~\Gamma_{g}}\right\} \:\longleftrightarrow\:\left\{ {\mathrm{core~surfaces}\atop \mathrm{labeled~by}~\left\{ a_{1},\ldots,b_{g}\right\} }\right\} \label{eq:1-1 correspondence}
\end{equation}
which restricts to a one-to-one correspondence
\begin{equation}
\left\{ {\mathrm{conjugacy~classes~of}\atop \mathrm{f.g.~subgroups~of}~\Gamma_{g}}\right\} \:\longleftrightarrow\:\left\{ {\mathrm{compact~core~surfaces}\atop \mathrm{labeled~by}~\left\{ a_{1},\ldots,b_{g}\right\} }\right\} .\label{eq:1-1 correspondence of f.g.}
\end{equation}

\subsection{Foldings and construction of core surfaces\label{subsec:Foldings-and-construction}}

One of the most useful concepts introduced in \cite{stallings1983topology}
is that of ``foldings'', now known as Stallings foldings. In graphs,
a folding is a process in which one merges two equally-labeled oriented
edges with the same head-vertex or with the same tail-vertex. Occasionally,
one also trims leaves from the graph. This process allows one to construct
the core graph of a f.g.~subgroup $H$ of the free group $\F_{r}$
from a finite set $\left\{ w_{1},\ldots,w_{k}\right\} $ of generators
as follows: create a bouquet with $k$ petals, one for every generator.
Then fold until no more folding steps are possible. The resulting
graph is the core graph of $H$ (e.g.~\cite[Proposition 3.8]{kapovich2002stallings}).

We now present an analogous folding process for f.g.~subgroups of
surface groups and their core surfaces. 
\begin{thm}[Foldings]
\label{thm:folding} Let $J\le\Gamma_{g}$ be a f.g.~subgroup and
let $\left\{ w_{1},\ldots,w_{k}\right\} $ be a generating set consisting
of words in $\left\{ a_{1}^{\pm1},b_{1}^{\pm1},\ldots,a_{g}^{\pm1},b_{g}^{\pm1}\right\} $.
Then $\core\left(J\right)$ can be constructed via the following finite
process:
\begin{enumerate}
\item \textbf{Preparation: }Without loss of generality, we may assume all
words $w_{1},\ldots,w_{k}$ represent non-trivial elements in $\Gamma_{g}$
(this can be easily and efficiently checked using Dehn's algorithm,
and trivial elements may be removed from the set).
\begin{itemize}
\item Consider a cycle $\CC$ representing $w_{1}$ and shorten it \emph{cyclically}
until one obtains a shortest representative of $w_{1}^{\Gamma_{g}}$
(as in Theorem \ref{thm:birman-series}). Assume the new cyclically
shortest word represents the same element as $w_{1}^{~s}=sw_{1}s^{-1}$
for some word $s$ in $\left\{ a_{1}^{\pm1},\ldots,b_{g}^{\pm1}\right\} $. 
\item Replace $w_{1}$ by the new cyclically shortest representative of
$w_{1}^{\Gamma_{g}}$, replace $w_{2},\ldots,w_{k}$ by $w_{2}^{~s},\ldots,w_{k}^{~s}$,
and shorten the latter $k-1$ words using Dehn's algorithm (namely,
consider the path representing $w_{i}$ and repeatedly replace long
blocks or long chains with their complements). Rename the new words
$w_{1},\ldots,w_{k}$, and replace $J$ with $J^{s}$ (recall that
this does not change the corresponding core surface).
\item Now construct a wedge of $k$ petals, where petal $i$ consists of
$\left|w_{i}\right|$ directed edges labeled by $\left\{ a_{1},\ldots,b_{g}\right\} $
so that it reads the word $w_{i}$. Call the resulting directed and
edge-labeled graph $Y_{1}$.
\end{itemize}
\item \textbf{Folding and boundary reduction: }Let $Y=Y_{1}.$ Perform the
following two steps \uline{alternately} until none of them is possible,
always beginning with folding:
\begin{itemize}
\item \textbf{Folding edges and $4g$-gons: }Fold the $1$-skeleton of $Y$
(in the sense of Stallings: so repeatedly merge together pairs of
equally-labeled edges with the same head or the same tail), and remove
multiplicities of $4g$-gons sharing the same oriented boundary, so
that there is at most one $4g$-gon attached to any closed $\left[a_{1},b_{1}\right]\ldots\left[a_{g},b_{g}\right]$
path.
\item \textbf{Boundary reduction: }If $\partial Y$ contains a long block,
choose one such block and add a new $4g$-gon along it so that this
block in $\partial Y$ is replaced by its complement. Otherwise, if
$\partial Y$ contains a long chain, choose one such long chain and
add $4g$-gons along it so that this long chain in $\partial Y$ is
replaced by its complement. 
\end{itemize}
Call the resulting complex $Y_{2}$.
\item \textbf{Strong boundary reduction: }Finally, as long as $\partial Y$
contains a half-block or a half-chain, add new $4g$-gon along them
so that this piece of $\partial Y$ is replaced by the complement
of the half-block or half-chain. Call the resulting complex $Y_{3}$.
\end{enumerate}
\end{thm}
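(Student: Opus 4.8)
The plan is to verify three things in turn: (a) the output of the process, call it $Y_{3}$, is a legitimate tiled surface; (b) $Y_{3}$ satisfies the four conditions of the intrinsic characterization of core surfaces (Proposition \ref{prop:intrinsic def of core surface}), namely it is connected, strongly boundary reduced, all of its boundary cycles are cyclically shortest, and it contains no funnels; and (c) the labeled fundamental group $p_{*}\left(\pi_{1}\left(Y_{3}\right)\right)$ is conjugate to $J$. Together with the one-to-one correspondence (\ref{eq:1-1 correspondence}), these imply $Y_{3}=\core\left(J\right)$. I would also need to argue that the process terminates after finitely many steps. I would organize the argument so that termination of each phase is checked as that phase is discussed.

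First I would handle the \textbf{Preparation} step: the initial wedge $Y_{1}$ has fundamental group (under $p_{*}$) equal to $J^{s}$, and replacing $J$ by a conjugate does not change the associated core surface, so throughout we may assume we are building $\core\left(J^{s}\right)$; I will drop the superscript. The point of cyclically shortening $w_{1}$ first and then conjugating and Dehn-reducing the rest is cosmetic/efficiency — it does not affect correctness — but it is harmless because each such replacement is an honest change of generating set together with a conjugation, and it is worth remarking that after this step $Y_{1}$ is a bouquet, hence connected, and its $p_{*}$-image is (a conjugate of) $J$. The \textbf{Folding} moves (Stallings folds on the $1$-skeleton and identification of parallel $4g$-gons) do not change the labeled fundamental group — this is the standard fact for Stallings folds on graphs, and removing a duplicate $4g$-gon with the same oriented boundary is a deformation retract, hence also $\pi_{1}$-preserving — and they visibly preserve connectedness. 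Folding terminates because each fold strictly decreases the number of edges (or of $4g$-gons). The \textbf{Boundary reduction} and \textbf{strong boundary reduction} moves each \emph{add} a $4g$-gon along a boundary path that reads a subword of the defining relator, replacing a long block / long chain / half-block / half-chain by its complement along that $4g$-gon (or those $4g$-gons). Adding a $4g$-gon along a path that reads a cyclic subword of $\left[a_{1},b_{1}\right]\cdots\left[a_{g},b_{g}\right]$ also does not change $p_{*}(\pi_{1})$, since the relator is trivial in $\Gamma_{g}$; and each such move keeps the complex connected. So $p_{*}(\pi_{1}(Y))$ is an invariant of the whole process, equal to (a conjugate of) $J$, which gives (c), and connectedness is preserved throughout, giving one of the four intrinsic conditions.

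Next I would address that the complex stays a \emph{tiled surface} (not merely a labeled $2$-complex satisfying \textbf{P1},\textbf{P3}): after the folding phase \textbf{P1} and \textbf{P3} hold by construction, and after a boundary reduction step the boundary contains no long block or long chain (each such move removes one and the termination argument below shows we eventually run out), so by Proposition \ref{prop:BR core graph with octagons is a tiled surface} the complex $Y_{2}$ is a genuine boundary reduced tiled surface. I then need the alternation of folding and boundary reduction to terminate: the key quantity is $\left|\partial Y\right|=2e-4gf$. A boundary-reduction move replaces a block of length $\ge 2g+1$ by its complement of length $\le 2g-1$ (and similarly, net, for a long chain), strictly decreasing $\left|\partial Y\right|$ by at least $2$, exactly as in the proof of Proposition \ref{prop:BR closure of compact is compact}; a folding move does not increase $\left|\partial Y\right|$. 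Since $\left|\partial Y\right|\ge 0$ always, and folding between boundary-reduction moves terminates on its own, the whole phase-2 loop terminates, and $Y_{2}$ is a compact boundary reduced tiled surface. In fact, because $Y_{2}$ is obtained from the initial finite wedge $Y_{1}$ by folds and by adding finitely many $4g$-gons all of which lie in $\br(Y_{1}\hookrightarrow\Upsilon)$, one sees $Y_{2}=\br(Y_{1}\hookrightarrow\Upsilon)$ in the ambient cover $\Upsilon=J\backslash\tsg$; this observation also re-proves compactness via Proposition \ref{prop:BR closure of compact is compact}.

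Finally I would treat the \textbf{strong boundary reduction} phase. Each half-block or half-chain switch adds $4g$-gons that must lie in $\core(J)$ — because replacing a half-block by its complement around a $4g$-gon sends a shortest representative to another shortest representative, so both the switched cycle and that $4g$-gon belong to $\core(J)$ by the definition of a core surface, and likewise for a half-chain — hence $Y$ stays inside $\core(J)$ throughout phase 3. The termination of phase 3 is the main obstacle, and this is exactly the delicate point already encountered in Lemma \ref{lem:cyclic core surfaces}: unlike long-block switches, a half-block switch keeps $\left|\partial Y\right|$ constant, so one cannot use $\left|\partial Y\right|$ as a monotone quantity. Instead I would argue as in the proof of Lemma \ref{lem:cyclic core surfaces}: the $4g$-gons glued along a given boundary component of $Y$ that represents $\gamma^{J}$ are all images, under the morphism $\core(\langle\gamma\rangle)\to\Upsilon$, of the finitely many $4g$-gons of the cyclic core surface $\core(\langle\gamma\rangle)$ (Lemma \ref{lem:morphisms of core surfaces} together with Lemma \ref{lem:cyclic core surfaces}); since $Y_{2}$ is compact it has finitely many boundary components, so only finitely many $4g$-gons ever get added, and phase 3 halts at some compact $Y_{3}$. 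By construction $Y_{3}$ is strongly boundary reduced and its $p_{*}(\pi_{1})$ still equals (a conjugate of) $J$; so by Lemma \ref{lem:SBR containing J contains core(J)} we get $\core(J)\subseteq Y_{3}$, and combined with $Y_{3}\subseteq\core(J)$ from the phase-3 discussion we conclude $Y_{3}=\core(J)$. (Alternatively, once $Y_{3}$ is known to be a compact tiled surface that is connected, strongly boundary reduced, with cyclically shortest boundary cycles — the last because it is a \emph{sub}surface of $\core(J)$, whose boundary cycles are cyclically shortest by Proposition \ref{prop:properties of core surfaces}(\ref{enu:boundary cycles of core surfaces represent shortest words}) and which contains no funnels since it is compact — Proposition \ref{prop:intrinsic def of core surface} identifies it directly as a core surface, and matching fundamental groups pins it down as $\core(J)$.) The only genuinely nontrivial ingredient beyond bookkeeping is this finiteness of phase 3, which I expect to be the heart of the proof.
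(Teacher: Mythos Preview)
Your outline tracks the paper's proof closely and gets the termination arguments right, but there is one genuine gap: you dismiss the preparation step as ``cosmetic/efficiency,'' and as a result you never establish the key invariant $r(Y)\subseteq\core(J)$. In the paper this is \emph{the} reason for the preparation. After making $w_{1}$ cyclically shortest and Dehn-reducing $w_{2},\ldots,w_{k}$, one argues that the lift $r\colon(Y_{1},v)\to(\Upsilon,q)$ lands in $\core(J)$: the $w_{1}$-petal lands there because it is cyclically shortest, so in particular the basepoint $q$ lies in $\core(J)$; for $i\ge 2$ one uses \cite[Thm.~2.8]{BirmanSeries} (two shortest \emph{paths} with the same endpoints differ by half-block switches) together with the fact that $\core(J)$ is strongly boundary reduced to conclude that the shortest path $r(w_{i})$ based at $q$ also lies in $\core(J)$. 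Once $r(Y_{1})\subseteq\core(J)$ is known, the invariant persists through phase~2 because $\core(J)$ is boundary reduced, and through phase~3 because $\core(J)$ is strongly boundary reduced; this is exactly how the paper deduces $Y_{3}\subseteq\core(J)$ and hence, combined with Lemma~\ref{lem:SBR containing J contains core(J)}, that $Y_{3}=\core(J)$.

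Your phase-3 argument that ``the added $4g$-gons must lie in $\core(J)$ because the boundary cycle is a shortest representative'' does not stand on its own: being boundary reduced only rules out long blocks and long chains in the boundary cycle $\delta$, not in $\delta^{*}$, so $\delta$ is not known to be cyclically shortest without further input. Likewise, your parenthetical alternative via Proposition~\ref{prop:intrinsic def of core surface} justifies condition~(iii) by appealing to $Y_{3}\subseteq\core(J)$, which is precisely what is at stake. Both routes are repaired by the same missing step: use the preparation (and the Birman--Series path result) to get $r(Y_{1})\subseteq\core(J)$, then carry this containment through the process.
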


\begin{proof}
We need to show that the process described is well-defined, i.e.,
that the boundary $\partial Y$ of $Y$ is well defined whenever we
use it (notice that $Y$ may not even be a tiled surface along the
way), that it terminates after finitely many steps, and that the resulting
complex is indeed $\core\left(J\right)$ (and so, in particular, independent
of the choices made along the way). We analyze the three parts of
the process one by one. Let $\left(\Upsilon,q\right)=J\backslash\left(\tsg,u\right)$
and let $\pi\colon\left(\Upsilon,q\right)\to\left(\Sigma_{g},o\right)$
be the covering map.

In the first part, denote by $v$ the wedge point of the $k$ petals
of $Y_{1}$. Trivially, there is a map $p\colon\left(Y_{1},v\right)\to\left(\Sigma_{g},o\right)$
and $p_{*}\left(\pi_{1}\left(Y_{1},v\right)\right)=J\le\Gamma_{g}$.
As in (\ref{eq:lift}), there is a (unique) lift $r\colon\left(Y_{1},v\right)\to\left(\Upsilon,q\right)$
with $\pi\circ r=p$.
\[
\xymatrix{ & \left(\Upsilon,q\right)\ar@{->>}[d]^{\pi}\\
\left(Y_{1},v\right)\ar[r]_{p}\ar@{-->}[ur]^{\exists!r} & \left(\Sigma_{g},o\right)
}
\]
By definition, $\core\left(J\right)$ is a subcomplex of $\Upsilon$.
Because $w_{1}$ is a cyclically shortest cycle, so is its $r$-image
in $\Upsilon$ (by Theorem \ref{thm:birman-series}), and therefore
its image is contained in $\core\left(J\right)$. In particular, $q\in\core\left(J\right)$.
Now for $i=2,\ldots,k$, the element $\gamma_{i}\in J$ represented
by $w_{i}$ has a representative in $\pi_{1}\left(\core\left(J\right),q\right)$.
Because $\core\left(J\right)$ is boundary reduced, $\core\left(J\right)^{\left(1\right)}$
contains also a shortest representative of $\gamma_{i}$ based at
$q$ (because one can perform Dehn's algorithm inside $\core\left(J\right)$).
Call this path $p_{i}$. So now $r\left(w_{i}\right)$ and $p_{i}$
are two closed paths at $\Upsilon^{\left(1\right)}$, based at $q$,
representing the same element. They lift to two paths starting at
$u$ with the same endpoint in $\tsg$. By \cite[Thm 2.8]{BirmanSeries},
any two shortest paths with the same endpoints in $\tsg^{\left(1\right)}$
differ by a finite sequence of half-block switches. This sequence
of half-block switches descends to a sequence of half-block switches
in $\Upsilon$ which turns $p_{i}$ into $r\left(w_{i}\right)$. Because
$\core\left(J\right)$ is strongly boundary reduced (by Proposition
\ref{prop:properties of core surfaces}(\ref{enu:core surfaces are strongly boundary reduced})),
these half-block switches all take place inside $\core\left(J\right)$.
We conclude that $r\left(Y_{1}\right)\subseteq\core\left(J\right)$.\\

Now consider the ``folding and boundary reduction'' part of the
process. In every folding step (of an edge or of removing $4g$-gons),
the total number of cells in $Y$ decreases, so every iteration of
``folding'' must terminate. At the end of such an iteration, properties
\textbf{P1 }and \textbf{P3 }from Proposition \ref{prop:comb-def-of-tiled-surface}
hold. As mentioned in the paragraph preceding Proposition \ref{prop:BR core graph with octagons is a tiled surface},
this guarantees that $Y$ admits a well-defined thick version and
therefore that $\partial Y$ is well defined. Thus every ``boundary
reduction'' iteration is well defined. Clearly, in every non-empty
iteration of boundary reduction, the length of $\partial Y$ strictly
decreases, and in every folding iteration, this length does not increase.
Therefore, the second part of the process is well defined and finite.

In addition, along the second part of the process, the map $p\colon\left(Y,v\right)\to\left(\Sigma_{g},o\right)$
is defined and $p_{*}\left(\pi_{1}\left(Y,v\right)\right)=J$ at every
step, so there is a corresponding lift $r\colon\left(Y,v\right)\to\left(\Upsilon,q\right)$
at every step. A folding step does not alter the image of $r$. Moreover,
every $4g$-gon added to $Y$ in a boundary reduction step, is added
along some long block (chain) at $\partial Y$, which is mapped by
$r$ to a long block (chain, respectively) in $\Upsilon$. But $\core\left(J\right)$
is boundary reduced, so for every long block (chain) in $\core\left(J\right)$,
the $4g$-gons along it also belong to $\core\left(J\right)$. By
induction we thus see that $r\left(Y\right)\subseteq\core\left(J\right)$
throughout part 2 of the process. 

At the end of the second part, $Y_{2}$ is a complex with edges directed
and labeled by $\left\{ a_{1},\ldots,b_{g}\right\} $, which satisfies
\textbf{P1} and \textbf{P3 }and which is also boundary reduced. By
Proposition \ref{prop:BR core graph with octagons is a tiled surface},
it is a tiled surface, and as in the proof of Proposition \ref{prop:comb-def-of-tiled-surface},
the unique lift $r\colon\left(Y_{2},v\right)\to\left(\core\left(J\right),q\right)$
must be an embedding. So $Y_{2}$ is a boundary reduced subcomplex
of $\core\left(J\right)$ with the same fundamental group.\\

Finally, we can think of the third part of the process as taking place
in $\Upsilon$: as in the proof of Lemma \ref{lem:SBR =00003D=00003D> every shortest representative of a cycle is contained},
it is a finite process in which the length of the boundary does not
change. As $\core\left(J\right)$ is strongly boundary reduced, the
third part never leaves $\core\left(J\right)$. So at the end of the
third part, $Y_{3}$ is a strongly boundary reduced subcomplex of
$\core\left(J\right)$ with the same fundamental group. By Lemma \ref{lem:SBR containing J contains core(J)},
we actually have $Y_{3}=\core\left(J\right)$. 
\end{proof}

\section{Epilogue}

This paper came out as a side of our work \cite{magee2020asymptotic,magee2022random}
on random coverings of compact surfaces. We tried to elaborate here
on some basic properties of core surfaces which we use in ibid, as
well as some basic properties that illustrate the resemblance of core
surfaces to Stallings core graphs. However, we made no systematic
attempt to (re-)prove results about subgroups of the surface group
$\Gamma_{g}$ using core surfaces. We believe core surfaces should
be useful here, and think that a more systematic attempt in this direction
should be taken in the future.

\bibliographystyle{alpha}
\bibliography{core_surfaces}
Michael Magee, Department of Mathematical Sciences, Durham University,
Lower Mountjoy, DH1 3LE Durham, United Kingdom

\noindent \texttt{michael.r.magee@durham.ac.uk}\\

\noindent Doron Puder, School of Mathematical Sciences, Tel Aviv University,
Tel Aviv, 6997801, Israel\\
\texttt{doronpuder@gmail.com}
\end{document}